\documentclass[11pt]{amsart}
\usepackage[utf8]{inputenc}

\usepackage[enableskew]{youngtab}
\usepackage[usenames,dvipsnames]{xcolor}

\usepackage{subcaption, pgf, tikz,geometry, hyperref}
\usepackage{nicematrix}

\usetikzlibrary{arrows, automata, calc, positioning}

\tikzset{every state/.style={minimum size=0pt}}

\usepackage[titletoc,title]{appendix}

\usepackage{amsmath,amsfonts,amssymb,mathtools,stmaryrd, faktor, mathrsfs,amsthm, multicol, comment, mathrsfs}
\usepackage{dsfont}

\theoremstyle{definition}
\newtheorem{theorem}{Theorem}[section]
\newtheorem{thmx}{Theorem}[section]

\newtheorem{proposition}[theorem]{Proposition}
\newtheorem{lemma}[theorem]{Lemma}
\newtheorem{corollary}[theorem]{Corollary}
\newtheorem{definition}[theorem]{Definition}
\newtheorem{example}[theorem]{Example}
\newtheorem{remark}[theorem]{Remark}
\newtheorem{conjecture}{Conjecture}

\newcommand{\N}{\mathbb{N}}
\newcommand{\F}{\mathbb{F}}
\newcommand{\K}{\mathbb{K}}
\newcommand{\Z}{\mathbb{Z}}
\newcommand{\Q}{\mathbb{Q}}
\newcommand{\R}{\mathbb{R}}

\newcommand{\T}{\mathcal{T}}
\renewcommand{\O}{\mathfrak{O}}
\renewcommand{\P}{\mathfrak{P}}
\newcommand{\h}{\mathfrak{h}}

\newcommand{\GL}{\mathrm{GL}}

\newcommand{\stab}{\text{Stab}}

\newcommand{\sym}{\mathcal{S}}

\expandafter\def\expandafter\normalsize\expandafter{%
    \normalsize%
    \setlength\abovedisplayskip{3pt}%
    \setlength\belowdisplayskip{3pt}%
    \setlength\abovedisplayshortskip{-4pt}%
    \setlength\belowdisplayshortskip{2pt}%
}

\usepackage{graphicx,float}
\usepackage{tikz-cd}

\usepackage[ruled,vlined]{algorithm2e}
\usepackage{algorithmic}

\usepackage{minted}
\makeatletter
\@namedef{subjclassname@2020}{\textup{2020} Mathematics Subject Classification}
\makeatother

\makeatletter
\def\@tocline#1#2#3#4#5#6#7{\relax
  \ifnum #1>\c@tocdepth 
  \else
    \par \addpenalty\@secpenalty\addvspace{#2}%
    \begingroup \hyphenpenalty\@M
    \@ifempty{#4}{%
      \@tempdima\csname r@tocindent\number#1\endcsname\relax
    }{%
      \@tempdima#4\relax
    }%
    \parindent\z@ \leftskip#3\relax \advance\leftskip\@tempdima\relax
    \rightskip\@pnumwidth plus4em \parfillskip-\@pnumwidth
    #5\leavevmode\hskip-\@tempdima
      \ifcase #1
       \or\or \hskip 1em \or \hskip 2em \else \hskip 3em \fi%
      #6\nobreak\relax
    \hfill\hbox to\@pnumwidth{\@tocpagenum{#7}}\par
    \nobreak
    \endgroup
  \fi}
\makeatother

\usepackage[
    style=numeric,
    giveninits=true
]{biblatex}

\renewbibmacro{in:}{}
\AtBeginBibliography{\small}

\title{$p$-Adic asymptotic subalgebra enumeration}
\date{\today}
\author{Tomas Reunbrouck}

\address{ Fakult\"at f\"ur Mathematik,
Universit\"at Bielefeld,
33615 Bielefeld, Germany}
\email{treunbrouck@math.uni-bielefeld.de}

\keywords{Nilpotent algebras, poles, zeta functions, asymptotics}
\subjclass[2020]{
 11M41
, 11D88  
, 20F69 
, 16W50 
, 45M05  
, 17B30 
}

\thanks{The author was supported by the Deutsche Forschungsgemeinschaft (DFG, German Research Foundation) – Project-ID~491392403 – TRR~358.}

\begin{document}

\begin{abstract}
    We introduce the notion of $p$-adic asymptotics, or $p$-asymptotics, to the context of finite-index subgroup and subalgebra enumeration. For finitely generated groups and finite-dimensional algebras, we connect these asymptotics with the poles of their associated local zeta functions. Our two main results establish the smallest real pole for local zeta functions associated with residually nilpotent algebras, as well as its simplicity and residue whenever this algebra is graded. We thereby provide proof to parts of two conjectures raised by Rossmann \cite{TR14} and give a precise description of the $p$-asymptotic behaviour inside these algebras.
\end{abstract}

\maketitle

\tableofcontents

\section*{Introduction}
\addtocontents{toc}{\protect\setcounter{tocdepth}{1}}

\subsection*{Zeta functions of groups and algebras}

Given a group $G$ and a positive integer $n$, let $a_n(G)$ denote the number of subgroups $H \leq G$ such that $|G:H| = n$. Similarly, let $a^\lhd_{n}(G)$ denote the number of normal subgroups of index $n$. It is a well-known fact that if $G$ is a finitely generated group, then $a^*_n(G) < \infty$ for all $n$, where $*$ indicates either subgroups or normal subgroups. This fact motivated Grunewald, Segal, and Smith \cite{gss} to define the Dirichlet series associated with the finite-index subgroups and normal subgroups of $G$:
$$ \zeta^\leq_G(s) \coloneq  \sum_{n=1}^\infty a_{n}(G) \, n^{-s} \ \text{ and } \ \zeta^\lhd_G(s) \coloneq  \sum_{n=1}^\infty a^\lhd_{n}(G) \, n^{-s} $$
If $G$ is also nilpotent, then these Dirichlet series allow an Euler product:
$$ \zeta^*_{G}(s) = \prod_{p \text{ prime}} \sum_{i=0}^\infty a^*_{p^i}(G) (p^{-s})^i $$
This is a consequence of the fact that finite nilpotent groups are the direct product of their Sylow subgroups. The Dirichlet series $\zeta^*_{G}(s)$ is called the global (normal) subgroup zeta function of $G$. We refer to the factors of the Euler product $\zeta^*_{G,p}(p^{-s}) \coloneq \sum_{i=0}^\infty a^*_{p^i}(G) (p^{-s})^i$ as the local (normal) subgroup zeta functions of $G$. Note that each of these can be seen as a power series in a new variable $p^{-s}$, which we will often write as $t$.

Groups which are torsion-free, in addition to being finitely generated and nilpotent, are called $\T$-groups and they are of special interest as the problem of counting their subgroups can essentially be `linearized'. In a nutshell, this means that by a correspondence theorem due to Malcev (see, for example \cite[Chapter 6]{segal_polycyclic}) for any given $\T$-group $G$, there exists a Lie algebra $L(G)$ over $\Z$, such that for all but finitely many primes $p$, to every (normal) subgroup $H \leq G$ of index $p^i$ naturally corresponds a subalgebra (resp. ideal) $M \leq L(G)$ of that same index, and vice versa. To make this more precise, we are naturally lead to define zeta functions of algebras as well.

When we say that $L$ is an $n$-dimensional algebra over $\Z$, we mean that $L$ is a free $\Z$-module of rank $n$ endowed with a bilinear `multiplication' map \textemdash \ not necessarily commutative or even associative. The algebra $L$ is a Lie algebra if this multiplication satisfies the axioms of a Lie bracket: anticommutativity and the Jacobi identity. We call $L$ abelian, when the multiplication is trivial. A subalgebra of $L$ then, is an additive subgroup of $L$ which is closed under multiplication of its elements. An ideal is an additive subgroup closed under left- and right-multiplication by any element of $L$. Now we let $a^\leq_m(L)$ and $a^\lhd_m(L)$ denote the number of subalgebras and ideals of index $m$. Then, in a similar vein, we define the global and local subalgebra zeta functions of $L$ as
$$ \zeta^*_{L}(s) \coloneq \sum_{m=1}^\infty a^*_{m}(L) \, n^{-s} \ \text{ and } \  \zeta^*_{L,p}(t) \coloneq \sum_{i=0}^\infty a^*_{p^i}(L) \, t^i $$
Once again, we have an Euler product $ \zeta^*_{L}(s) = \prod_{p \text{ prime}} \zeta^*_{L,p}(p^{-s}) $, this time as a consequence of the Chinese Remainder Theorem. For $\T$-groups, the Euler product relied on nilpotency, but for algebras we do not need to impose this restriction. Thus, our motivation originating with $\T$-groups notwithstanding, we are interested in local subalgebra and ideal zeta functions outside of the class of nilpotent algebras as well. The scope of subalgebra zeta functions hence being much broader than that of $\T$-groups, we will mostly focus on algebras for the remainder of this text.

\subsection*{Notation}

Let $\N$ denote the set of non-negative integers $\{0, 1, 2, \dots\}$ and $[n] := \{1, \dots , n\}$. The $p$-adic integers and numbers are written as $\Z_p$ resp. $\Q_p$. For $a \in \Z_p$ and $q$ a $p$-power, the $q$-adic valuation is denoted $v_q(a) \coloneq \max \{ m \in \N \, | \, q^m \text{ divides } \,  a \}$. For a sequence $(a_i)_{i\in \N}$ over $\Q_p$ and $a \in \Q_p$, we denote $p$-adic convergence by $a_i \underset{p}{\longrightarrow} a$. We say that $(a_i)_{i\in \N}$ behaves $p$-asymptotically like another sequence $(b_i)_{i \in \N}$, denoted $a_i \underset{p}{\sim} b_i$, if $a_i - b_i \underset{p}{\longrightarrow} 0$.

\subsection*{Setup of the paper}

Given an $n$-dimensional $\Z$-algebra $L$, it is a well-known fact that for any prime $p$, the subalgebras of $p$-power index are in bijection with those of $L_p \coloneq  L \otimes \Z_p$. The $\Z_p$-algebra $L_p$ is additively isomorphic to $\Z_p^n$ endowed with a $\Z_p$-linear multiplication induced by the $\Z$-linear multiplication of $L$. Note that since $L_p$ is a $\Z_p$-algebra, all its subalgebras are of $p$-power index, and that therefore there is no difference between the local and global zeta functions. Thus we can simply write $\zeta^*_{L_p}(t)$, without ambiguity, and state: $\zeta^*_{L,p}(t) = \zeta^*_{L_p}(t)$. 

One can also consider such $\Z_p$-algebras in general, not necessarily inherited from some initial global object, as for $L$ and $L_p$. In this paper we study such algebras in their own right, detached from an initial global context. In fact, we need not restrict ourselves to $\Z_p$-algebras. All results in this paper are stated for algebras over any compact discrete valuation ring (or DVR) of characteristic $0$. Note that this class of DVRs consists precisely of the rings of integers of $p$-adic fields. 

Throughout, $\O$ denotes a compact DVR of characteristic $0$, $\P$ its maximal ideal, and $\K$ its field of fractions. Its valuation is $\nu$ and $\pi \in \P \setminus \P^2$ is a choice of uniformizing parameter. Note that $\P = \pi \O$. The residue field is $\F_q$ with characteristic $p$. Now, an $n$-dimensional $\O$-algebra $A$ is defined as the $\O$-module $\O^n$ endowed with an $\O$-linear multiplication. Subalgebras are $\O$-submodules of $\O^n$ such that they are closed under multiplication. Ideals are subalgebras which are closed under left- or right-multiplication by any element of $A$. Any subalgebra or ideal is therefore of index $q^i$ for some $i \in \N$, and we let $a_{q^i}(A)$ resp. $a^\lhd_{q^i}(A)$ denote their number. The subalgebra and ideal zeta functions of $A$ are, as before,
$$ \zeta^*_{A}(t) \coloneq  \sum_{i=0}^\infty a^*_{q^i}(A) t^i. $$
We will write $H \leq_{q^i} A$ if $H$ is an index-$q^i$ subalgebra of $A$. 

\subsection*{Rationality and poles}

Given a finite-dimensional $\O$-algebra $A$, its zeta function is rational, by Grunewald, Segal, and Smith \cite{gss}. That is, there exist integer polynomials $P(T), Q(T) \in \Z[T]$ such that
$$ \zeta^*_{A}(t) = \frac{P(t)}{Q(t)}\, . $$
Du Sautoy \cite{duSau_uniformity} showed  that there exist an $e\in\N$, nonnegative integers $d_1, \dots, d_e$, and distinct pairs of non-negative integers $(a_1,b_1),\dots,(a_e,b_e)$, such that if $P(T)/Q(T)$ is reduced
\begin{equation}\label{Q(T)=prod}
      Q(T) \ \text{ divides } \  \prod_{j=1}^e (1-q^{a_j}T^{b_j})^{d_j+1} .
\end{equation}
In particular, the poles of the local zeta function are of the form 
\begin{equation}\label{local_poles_equation}
    t = \mu \ q^{-a_j/b_j} \ \text{ or, equivalently } \ s = \frac{a_j}{b_j} + \frac{2 \pi k}{b_j \log q}i 
\end{equation}
where $\mu$ denotes any $b_j$-th root of unity and $k \in \Z$. By taking a partial fraction decomposition of $\zeta_A^*(t)$ and considering the resulting  sum of geometric series, one shows that the coefficients can be seen as a finite sum of exponential functions. More precisely, there exist quasi-polynomials $R_j(x)$ over $\Q$ such that:
\begin{equation}\label{a=sum_exp}
 a^*_{q^i}(A) = \sum_{j=1}^e R_j(i) \, (q^{a_j/b_j})^i \, . 
\end{equation}
The degree of $R_j$ is $d_j$ and its period divides $b_j$. 
Equation \ref{a=sum_exp} clearly shows how the poles of the local zeta function inform us about the asymptotic behaviour of the coefficients $a^*_{q^i}(L)$. Indeed, assume without loss of generality that the pairs $(a_j,b_j)$ are numbered such that the numbers $\frac{a_j}{b_j}$ are in non-decreasing order, then the pole at $s = \frac{a_e}{b_e}$, being the largest real pole, dominates the asymptotics:
\begin{equation*}
    a^*_{q^i}(A) \sim  i^{d_e} \exp\left( \frac{a_e}{b_e} i  \log q\right) \ \text{ as } i \to \infty .
\end{equation*}
However, from a $p$-adic point of view it is the smallest real pole at $s=\frac{a_1}{b_1}$ that dominates.
\begin{equation}\label{a_to_smallest_pole_eq}
    a^*_{q^i}(A) \underset{p}{\sim}  R_1(i) \, q^{(a_1/b_1)i}  \ \text{ as } i \to \infty .
\end{equation}

From now on, we will refer to a pole, whether in terms of the variable $t$ or $s$, as big or small, depending on whether the associated fraction $\frac{a_j}{b_j}$ is big or small.

\begin{example}\label{abelian_example}
    In the abelian case (i.e. trivial multiplication) we have
    $$ \zeta^\lhd_{\O^n}(t) =  \zeta_{\O^n}(t) = \frac{1}{(1-t)(1-qt)\dots (1-q^{n-1}t)} \,.$$
    For any pair of non-negative integers $a, b$, let $\binom{a}{b}_q \coloneq  \frac{(1-q^a)\dots(1-q^{a-b+1})}{(1-q)\dots(1-q^b)}$ denote the Gaussian binomial coefficient. Then
    $$ a_{q^i}(\O^n) = \binom{n-1+i}{n-1}_q = \frac{1}{\prod_{a=1}^{n-1} (1-q^a)} \sum_{c=0}^{n-1} q^{ci} \left( (-1)^c \binom{n}{c}_q q^{c(c+1)/2} \right). $$
    Hence, asymptotically
    $$ a_{q^i}(\O^n) \sim   q^{(n-1)i} . $$
    More interestingly for our purposes, however, is that $p$-asymptotically, we have
    \begin{equation}\label{abelian_example_equation}
        a_{q^i}(\O^n) \underset{p}{\longrightarrow} \frac{1}{(1-q)\dots(1-q^{n-1})} \eqcolon \binom{\infty}{n-1}_q \, .
    \end{equation}
\end{example}

\begin{example}[\cite{gss}, Proposition 8.1]\label{H_example}
    We define the Heisenberg algebra $\h(\O)$ over $\O$ as the Lie algebra consisting of strictly upper triangular $(3\times3)$-matrices endowed with the usual Lie bracket. $$\zeta_{\h(\O)}(t) = \frac{1-q^3t^3}{(1-t)(1-qt)(1-q^2t^2)(1-q^3t^2)}\, .$$ 
    A straightforward calculation shows that
    \begin{align*}
    a_{q^i}(\h(\O)) &= \frac{1}{(1-q)(1-q^2)}1^i + \left[ \frac{-3q(q+1) }{2(1-q)(1-q^2)} \right] q^i + 
     \left[ \frac{-q^2-q+2 }{2(1-q)(1-q^2)} \right] (-q)^i \\
    &+ \left[ \frac{ q^2(1+q) + q^{3/2} + q^{5/2} }{2(1-q)(1-q^2)} \right] (q^{3/2})^i + \left[ \frac{ q^2(1+q) - q^{3/2} + q^{5/2} }{2(1-q)(1-q^2)} \right] (-q^{3/2})^i \, .
\end{align*}

\noindent Thus, asymptotically we have
$$ a_{q^i}(\h(\O)) \sim q^{\frac{3}{2}i} \, . $$
and $p$-asymptotically,
$$ a_{q^i}(\h(\O)) \underset{p}{\longrightarrow} \frac{1}{(1-q)(1-q^2)}\, . $$
The latter expression matches Equation \ref{abelian_example_equation}, the abelian $p$-asymptotic limit for $n=3$. As we will see, this is not a coincidence.
\end{example}

\subsection*{$p$-Asymptotics}
The previous section should provide ample motivation to investigate the smallest pole of subalgebra and ideal zeta functions. For all known examples, this pole appears at $t=1$ (equivalently, $s=0$). By Equation \ref{local_poles_equation}, this is the smallest pole possible, but it is unclear whether there is always a pole at $t=1$. This was conjectured by Rossmann.

\begin{conjecture}[Rossmann (2014) \cite{TR14}, Conjecture II]\label{pole_exists_conjecture_Rossmann}
    Let $A$ be a finite-dimensional $\O$-algebra. Then $\zeta_{A}(t)$ and $\zeta^\lhd_{A}(t)$ have a pole at $t=1$.
\end{conjecture}

\begin{remark}\label{pole_exists_conj_remark}
    In terms of $p$-asymptotic behaviour, by Expression \ref{a_to_smallest_pole_eq}, the conjecture is equivalent to saying that the zeta function's coefficients do not $p$-adically converge to $0$:
$$ v_q( a^*_{q^i}(A) )  \not\rightarrow  \infty \ \text{ as } \ i \to \infty. $$
\end{remark}

\noindent The first major result of this paper is to answer Conjecture \ref{pole_exists_conjecture_Rossmann} in the affirmative for residually nilpotent algebras $A$. In fact, we will prove the following, stronger statement.

\begin{thmx}\label{1modp_nilpotent_algebras}
    Let $A$ be a finite-dimensional residually nilpotent $\O$-algebra. Then for all $i \in \N$,
    $$ a_{q^i}(A) \equiv 1 \bmod q \ \ \text{ and } \ \ a^{\lhd}_{q^i}(A) \equiv 1 \bmod q $$
\end{thmx}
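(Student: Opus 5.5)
Fix $i \geq 1$; for $i=0$ there is nothing to show, since $a^*_{1}(A)=1$. The plan is a group-action (orbit-counting) argument modulo $p$, refined to work modulo $q$. Let $X_i$ be the finite set of subalgebras (resp. ideals) of $A$ of index $q^i$. Every such $H$ contains $\pi^i A$, so $X_i$ is in bijection with the subalgebras (resp. ideals) of index $q^i$ in the finite algebra $\bar A = A/\pi^i A$. Residual nilpotence of $A$ should give that $\bar A$ is nilpotent: the chain $A \supseteq A^2 \supseteq A^3 \supseteq \cdots$, with $A^k$ the span of all products of length $\geq k$, has intersection $0$. By finite dimensionality and compactness (Chevalley-type lemma for $\O$-submodules of $\O^n$), some $A^k$ then lies in $\pi^i A$.

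The counting proceeds by a fixed-point argument. Consider the group $U$ of algebra automorphisms of $\bar A$ of the form $x \mapsto x + \delta(x)$, with $\delta$ built from the nilpotent structure, for instance inner-type unipotent maps. More robustly, take $U = 1 + \mathfrak{n}$, where $\mathfrak{n}$ consists of $\O$-linear maps raising the filtration degree. Such a group is a finite $p$-group, so $|X_i| \equiv |X_i^U| \pmod p$. This only yields congruence mod $p$, whereas the statement asks for mod $q$. To reach $q$, I would instead use the additive structure: translate by $\F_q$-parameters. Concretely, I would induct on $\dim$ along a central series of $\bar A$, using a filtration by ideals $A = I_0 \supset I_1 \supset \cdots$ with $A I_j + I_j A \subseteq I_{j+1}$, refined so that successive quotients are one-dimensional over $\F_q$ after reduction.

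The core step is a recursion. Pick $z$ in the last nonzero term of a refined lower central series of $A$, so that $z$ is central-like, i.e. $zA = Az = 0$ modulo deeper terms, and $\O z$ is an ideal with $A/\O z$ residually nilpotent of smaller dimension. Then partition $X_i$ according to the image $\bar H = (H + \O z)/\O z$ and the intersection $H \cap \O z = \pi^m \O z$. For fixed $\bar H$ and $m$, the lifts $H$ correspond to (a subset of) complements, parametrised by homomorphisms $\bar H \to \O z/\pi^m \O z$. The closure condition is affine in the lift because $z$ annihilates everything modulo deeper terms. Hence, when nonempty, the set of lifts is a torsor under an $\O$-module, so its size is $1$ when that module is trivial and a positive power of $q$ otherwise. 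Summing, $a_{q^i}(A) \equiv \#\{\text{pairs with trivial module}\} \pmod q$. The trivial-module terms should be exactly those where $H \supseteq \O z$ (i.e. $m=0$), or where $\bar H = A/\O z$. This gives a recursion of the form $a_{q^i}(A) \equiv a_{q^i}(A/\O z) + (\text{contribution of } H \text{ with } H+\O z = A) \pmod q$. I then need the second contribution to be $\equiv 0$ for $i\ge1$. Nilpotence should force $H + \O z = A$ to imply $H=A$: $z \in A^k$ is a product, so $z$ lies in $H^k + \pi(\cdots)$, and a Nakayama/Frattini-type argument gives $H = A$. The induction then yields $a_{q^i}(A)\equiv 1$. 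Ideals are handled identically, since the closure condition stays affine.

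The main obstacle is the lifting step: proving that the lifts of a given $\bar H$ with given $m$ form either a single element or a set of size divisible by $q$, rather than merely by $p$. Here the torsor must be an $\O$-module (a nonzero finite $\O$-module has order a power of $q$), so I must check that the subalgebra condition is $\O$-linear in the lifting parameter. To guarantee this, I would first try choosing $z$ in the genuine annihilator of $A$ modulo $\pi^i$, working in $\bar A$, where nilpotence gives a nonzero two-sided annihilator.
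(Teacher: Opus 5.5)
\textbf{Verdict: there is a genuine gap.} Your plan is to split off a central element and count lifts as torsors. That is the classical Frobenius-type argument for $p$-groups, and it can be made to work. As written, however, the decisive step is misidentified and its key input is never proved.

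First, the torsor claim needs $z$ in the genuine two-sided annihilator. If $z$ is only central ``modulo deeper terms'', then $(h+\phi(h))(h'+\phi(h'))$ contains the terms $\phi(h)h'$, $h\phi(h')$ and $\phi(h)\phi(h')$. These need not lie in $\O z$, and the last is quadratic in $\phi$, so closure is not an affine condition on the lift. A residually nilpotent $A$ can have zero annihilator (e.g.\ $A=\O$ with $x\cdot y=\pi xy$), so you are forced into $A/\pi^iA$. The induction must then be recast for arbitrary finite nilpotent $\O$-algebras, by length rather than dimension, because $(A/\pi^iA)/\O z$ is no longer of the form $A'/\pi^iA'$. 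The proposal only announces this.

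Second, even for $z\in\mathrm{Ann}(A)$, your list of trivial-torsor terms is wrong. When nonempty, the subalgebra lifts of $(\bar H,m)$ form a torsor under $\mathrm{Hom}_{\O}(\bar H/\bar H^2,\O z/\pi^m\O z)$. For $\bar H=A/\O z$ and $m\geq1$ this group is not trivial: for $A=\O^2$ abelian and $z=e_2$ there are $q^m$ such lifts. The only degenerate case besides $m=0$ is $\bar H=0$. Your Frattini step ``$H+\O z=A\Rightarrow H=A$'' also fails as soon as $z\notin A^2+\pi A$. This is unavoidable once the recursion reaches an abelian algebra: in $\O^2$ with $z=e_2$, the lattice $H=\O e_1+\pi\O e_2$ has index $q$ and $H+\O z=A$.

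The missing idea is that this Hom group is nonzero for every $m\geq1$ and every $\bar H\neq0$, i.e.\ $\bar H\neq\bar H^2+\pi\bar H$. This is exactly where (residual) nilpotence enters, and it is the fact underlying Lemma~\ref{abp=1}(a). With it the argument closes cleanly:
\begin{itemize}
\item[(i)] For a finite nilpotent $\O$-algebra $B$, take $z\in\mathrm{Ann}(B)$ with $\pi z=0$, and set $C=\F_q z$.
\item[(ii)] An index-$q^j$ subalgebra $H$ either contains $C$, and these are counted by $a_{q^j}(B/C)$, or it is a complement of $C$ in the subalgebra $\tilde H=H+C$ of index $q^{j-1}$ with $\tilde H^2\subseteq H$.
\item[(iii)] For fixed $\tilde H$ there are either $0$ or $q^{\dim_{\F_q}\bar H/(\bar H^2+\pi\bar H)}$ such complements, where $\bar H=\tilde H/C$, and the exponent is positive whenever $\bar H\neq0$.
\item[(iv)] Hence $a_{q^j}(B)\equiv a_{q^j}(B/C)\bmod q$ whenever $q^j<|B|$. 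Induction on $|B|$, applied to $B=A/\pi^iA$, gives the theorem.
\end{itemize}

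This repaired route differs genuinely from the paper's. The paper inducts on $i$ inside $A$ by double counting pairs $K\leq_q H$ (Lemma~\ref{suma=sumb}). It needs both $a_q(H)\equiv1$ and the idealizer argument for $b_q(K)\equiv1$. Your route needs only the Frattini count, at the price of working with general finite nilpotent algebras.
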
 

\noindent The full conjecture, however, remains open. For nilpotent algebras, Rossmann goes further and conjectures the following.

\begin{conjecture}[Rossmann (2014) \cite{TR14}, Conjecture IV]\label{residue_conjecure_Rossmann}
    Let $A$ be an $n$-dimensional nilpotent $\O$-algebra. Then $\zeta_{A}(t)$ and $\zeta^\lhd_{A}(t)$ have a simple pole at $t=1$ with residue $\binom{\infty}{n-1}_q$.
\end{conjecture}

\noindent The additional requirement that $A$ be nilpotent is essential, as it is easy to find non-nilpotent examples for which the conclusion of the conjecture does not hold.

\begin{example}\label{Z2_example}
    The subalgebra zeta function of $\Z_p^2$ equipped with component-wise multiplication can be found in Snocken's PhD-thesis \cite[Proposition 7.15]{snocken_thesis}:
    $$ \zeta_{(\Z_p^2,\, \cdot\, )}(t) = \frac{(1-t^2)^2}{(1-t)^3(1-pt^3)}\ . $$
    Its residue at $t=1$ is $\frac{4}{1-p}$. We explain this discrepancy in Example \ref{Z2_revisited_example}.
\end{example}

\noindent In essence, Conjecture \ref{residue_conjecure_Rossmann} states that at $t=1$ the zeta functions of nilpotent algebras behave the same as the zeta function of the abelian algebra of the same dimension. From the perspective of $p$-asymptotics Conjecture \ref{residue_conjecure_Rossmann} is equivalent to saying that the zeta function's coefficients converge to the conjectured residue:
$$ a_{q^i}(A) \  \underset{p}{\longrightarrow} \ \binom{\infty}{n-1}_q \ \text{ as } \ i \to \infty.$$

\noindent Note that $\binom{\infty}{n-1}_q$ is the $p$-adic limit for $a_{q^i}(\O^n)$, as we saw in Example \ref{abelian_example}. 

 Conjecture \ref{residue_conjecure_Rossmann} has drawn considerable interest since its formulation: see  \cite[Theorem 1.1, Corollary 3.4]{voll_ideal_class2}, \cite[ Conjecture 1.11, Remark 1.13]{voll_submod_nilp_endo}, \cite[Conjecture E]{rossmann_submodules_enomorphism}, \cite[Section 9.3]{rossmann_computing_II}. Variants and follow-up questions have been considered: \cite[Conjecture 6.11]{lee_voll_graded_ideals}, \cite[Question 10.3]{rossmann_computing_local}. However, so far, progress towards answering Conjecture \ref{residue_conjecure_Rossmann} has been limited. For an infinite class of nilpotent algebras, the following two theorems answer Rossmann's Conjecture \ref{residue_conjecure_Rossmann} in the positive.

\begin{thmx}\label{class2_residue_theorem}
    Let $A$ be a class-$2$ nilpotent $n$-dimensional $\O$-algebra. Then $\zeta_A(t)$ and $\zeta^\lhd_A(t)$ have a simple pole at $t=1$ with residue $\binom{\infty}{n-1}_q$. 
\end{thmx}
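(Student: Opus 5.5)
Since the $p$-asymptotic statement $a^*_{q^i}(A)\underset{p}{\longrightarrow}\binom{\infty}{n-1}_q$ is equivalent to the claim (by Expression \ref{a_to_smallest_pole_eq} and Theorem \ref{1modp_nilpotent_algebras}, which already gives a pole at $t=1$), I would prove the $p$-adic limit directly. Let $A$ be class $2$. Choose an $\O$-submodule complement decomposition $A = B \oplus Z$ with $A^2 \subseteq Z$ and $Z$ a pure submodule containing $A^2$ (take $Z$ the isolator of $A^2$, or any pure submodule between $A^2$ and the annihilator part). Write $\dim B = d$ and $\dim Z = n-d$. Every submodule $H\le A$ of finite index is determined by its intersection $H\cap Z =: M$ (index $q^{j}$ in $Z$) and its image $\overline H\le A/Z\cong B$ (index $q^{i-j}$), together with a lift, i.e. a homomorphism $\overline H\to Z/M$. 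The closure condition for subalgebras becomes $\overline H\cdot \overline H\subseteq M$, since products land in $Z$ and only depend on images mod $Z$ (class $2$). For ideals the condition is $\overline H\cdot B + B\cdot\overline H\subseteq M$. In either case, the number of lifts is $|Z:M|^{d}=q^{jd}$ divided by nothing, hence
\[
a^*_{q^i}(A)=\sum_{j=0}^{i} q^{jd}\,\#\{(\overline H,M): |B:\overline H|=q^{i-j},\ |Z:M|=q^j,\ \text{condition}\}.
\]

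The key observation is that $p$-adically the factor $q^{jd}$ kills all terms with $j$ large whenever $d\ge1$ (if $d=0$ the algebra is abelian and Example \ref{abelian_example} applies). So modulo $q^{N}$, only $j<N/d$ contributes. I would then fix $j$ and let $i\to\infty$: for a fixed $M$ of index $q^j$ in $Z$, the condition $\overline H\overline H\subseteq M$ holds automatically once $\overline H\subseteq \pi^{k}B$ with $2k\ge j$. The set of $\overline H$ violating it has to be controlled. I would count all $\overline H$ of index $q^{i-j}$ in $B$ (the abelian count $\binom{d-1+i-j}{d-1}_q$) minus bad ones. The bad $\overline H$ are not contained in $\pi^{\lceil j/2\rceil}B$, and I would argue that their number, grouped appropriately, is divisible by high powers of $q$ or converges $p$-adically to the same limit as for the abelian case. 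This is the main obstacle. What I would try first is to show that for fixed $j$ the count of pairs $(\overline H,M)$ satisfying the condition is congruent modulo $q^{N}$ to the unconstrained count, for $i$ large. A possible route is an orbit argument: the group $1+\pi^{m}\mathrm{End}$ or a torus action on lattices has orbits of $q$-power size on bad configurations. Another route is a Theorem \ref{1modp_nilpotent_algebras}-style fixed-point argument refined to higher powers of $q$.

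Granting that, the sum becomes $p$-adically
\[
\sum_{j\ge0} q^{jd}\binom{n-d-1+j}{n-d-1}_q\cdot\binom{\infty}{d-1}_q,
\]
using Example \ref{abelian_example} for the limit of the count of $\overline H$ as $i\to\infty$. Here the series in $j$ converges $p$-adically, so the limits may be interchanged, given the uniform control mod $q^N$. Finally I would verify the $q$-identity
\[
\binom{\infty}{d-1}_q\sum_{j\ge0}\binom{n-d-1+j}{j}_q q^{jd}=\binom{\infty}{n-1}_q,
\]
which follows from the $q$-binomial theorem $\sum_j\binom{m-1+j}{j}_q x^j=\prod_{a=0}^{m-1}(1-q^ax)^{-1}$ at $x=q^d$, $m=n-d$. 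This gives the product $\prod_{a=d}^{n-1}(1-q^a)^{-1}$. Equivalently, it is the same computation as for the abelian algebra $\O^n=\O^d\oplus\O^{n-d}$, which serves as a check. Simplicity of the pole then follows because the limit is finite and nonzero; a double pole would force $R_1$ to be nonconstant and the coefficients would not converge $p$-adically.
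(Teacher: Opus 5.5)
\textbf{There is a genuine gap: the key step is assumed, not proved.} Your reduction is sound and matches the paper's. It has four ingredients: split off a central pure submodule $Z\supseteq A^2$; count lifts by the factor $q^{jd}$ (the paper's $q^{a\,i_2}$); use that factor to dispose of all large $j$; and observe that a bad $\overline H$ is not contained in $\pi^{\lceil j/2\rceil}B$ (Lemma~\ref{h<i_2}). The step you label ``the main obstacle'' is that, for fixed $j$, the number of bad $\overline H$ has $q$-adic valuation tending to $\infty$ with $i$. That step is the whole content of the theorem, and you only grant it. The homothety bound by itself only restricts which lattices can be bad; it gives no divisibility. Your torus route cannot work in this setup: the grading torus acts on $B$ and on $Z$ by unit scalars, so it fixes every pair $(\overline H,M)$.

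\textbf{The missing idea} is a group that acts on the bad lattices and has large orbits. Let $\lambda_1\ge\dots\ge\lambda_d$ be the elementary divisors of $\overline H$, so $\overline H=\bigoplus_r\pi^{\lambda_r}\O x_r$ for some basis $(x_r)$ of $B$. Then $\lambda_1\ge (i-j)/d$, while badness forces $\lambda_d=h(\overline H)<j/2$ (resp.\ $<j$ for ideals). Since $\pi^jZ\subseteq M$, the congruence subgroup $\Gamma_j=1+\pi^j\mathrm{Mat}_d(\O)$ of $\GL(B)$ preserves the index of $\overline H$. For fixed $M$ it also preserves the closure condition, because $(gx)(gy)-xy=((g-1)x)(gy)+x((g-1)y)\in\pi^jA^2\subseteq M$; the ideal condition is handled the same way. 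In the basis $(x_r)$, the stabiliser of $\overline H$ in $\Gamma_j$ is cut out by $\nu(g_{rc})\ge\max(j,\lambda_r-\lambda_c)$. Hence the orbit of $\overline H$ has size $q^{\sum_{r\ne c}\max(0,\lambda_r-\lambda_c-j)}\ge q^{\lambda_1-\lambda_d-j}$, and this exponent exceeds $(i-j)/d-3j/2$, which tends to $\infty$. So for each fixed $M$ the bad $\overline H$ split into orbits whose sizes are powers of $q$ with exponents tending to $\infty$.

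The paper does the same thing with the explicit perturbation $y_a\mapsto y_a+\pi^{i_2+1}\mu x_1$ of the basis vector with the smallest elementary divisor. It obtains the uniform bound $a\,i_2+v_q(\#c^{\not\leq}_{i-i_2,i_2}(A))\ge i/a-1$. With this step in place you get $a^*_{q^i}(A)\equiv a_{q^i}(\O^n)\bmod q^N$ for $i$ large, so the limit can be read off from Example~\ref{abelian_example}. Your $q$-binomial computation is correct but only recomputes that abelian limit.
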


\noindent For $\zeta_A(t)$, we can go further and prove Conjecture \ref{residue_conjecure_Rossmann} in the much larger class of graded algebras. For any commutative ring $R$, an $R$-algebra $A$ is called graded, if it can be written as a direct sum of $R$-modules:  $A = \oplus_{j=1}^\infty A_j$ such that $A_i \cdot A_j \subseteq A_{i+j}$ for all $i,j \in \N$.

\begin{thmx}\label{grading_residue_theorem}
    Let $A$ be an $n$-dimensional graded $\O$-algebra. Then $\zeta_{A}(t)$ has a simple pole at $t=1$ with residue $\binom{\infty}{n-1}_q$.
\end{thmx}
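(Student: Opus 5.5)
The plan is to prove the $p$-adic form of the statement, namely
$$ a_{q^i}(A) \underset{p}{\longrightarrow} \binom{\infty}{n-1}_q \quad (i\to\infty). $$
By Expression \ref{a_to_smallest_pole_eq} and Example \ref{abelian_example}, this is equivalent to a simple pole at $t=1$ with the stated residue. Indeed, $t=1$ is the smallest possible pole. The $p$-adic limit of the coefficients equals the leading coefficient of $R_1$, provided $R_1$ is a nonzero constant, and it is $0$ if there is no pole at $t=1$. A pole of order $\geq 2$ would make $R_1(i)$ a nonconstant quasi-polynomial, so the coefficients could not converge. It therefore suffices to show that for each $k$ there is an $i_0(k)$ such that
$$ a_{q^i}(A) \equiv a_{q^i}(\O^n) \bmod q^{k} \qquad \text{for all } i\ge i_0(k). $$
By Example \ref{abelian_example}, $a_{q^i}(\O^n)\underset{p}{\longrightarrow}\binom{\infty}{n-1}_q$, so this congruence gives the claim.

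The tool is the grading. Write $A=\bigoplus_{j\ge1} A_j$. Each unit $\lambda\in\O^\times$ gives an $\O$-algebra automorphism $\theta_\lambda$ acting on $A_j$ by $\lambda^j$; it is multiplicative because $A_iA_j\subseteq A_{i+j}$. The pro-$p$ group $U=1+\P$ acts through these automorphisms on the finite set of index-$q^i$ subalgebras. Every orbit has $p$-power size, and a non-fixed point of $U_m=1+\P^{m}$ lies in an orbit whose size is divisible by $|U_m : \mathrm{Stab}|$. Hence, modulo a power of $p$ that grows with $m$, the count $a_{q^i}(A)$ is congruent to the number of index-$q^i$ subalgebras fixed by $U_m$. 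The same action makes sense on the abelian algebra $\O^n=\bigoplus A_j$ with the same weights, and there every submodule is a subalgebra. So $a_{q^i}(\O^n)$ is likewise congruent to the number of $U_m$-stable submodules of index $q^i$. It thus remains to compare $U_m$-stable subalgebras of $A$ with $U_m$-stable submodules, and to show that the difference is divisible by a high power of $q$.

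For this comparison I would exploit a second feature of graded algebras, contraction. With $c_j=m'(j-1)$, the map $\sigma(x)=\pi^{c_j}x$ for $x\in A_j$ is an injective algebra homomorphism from $A$ into $A_{m'}$, where $A_{m'}$ is the same module with product rescaled by $\pi^{m'}$. The algebra $A_{m'}$ is congruent to the abelian algebra modulo $\pi^{m'}$, so every submodule of index $\le q^{m'}$ is a subalgebra of $A_{m'}$. A $U_m$-stable lattice is "almost graded": its components along the weight spaces are controlled up to $\pi^{O(m)}$. Using this, I would show that for such lattices the subalgebra condition in $A$ can fail only for lattices that lie in nontrivial orbits of a further finite $p$-group action. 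The candidate for this action is translation by unipotent automorphisms $x\mapsto x+\pi^r\delta(x)$, with $\delta$ raising degree. These orbits cancel modulo $q^k$ once $i$ is large relative to $k$ and $m$.

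The main obstacle is this last step: making precise that non-subalgebra stable lattices come in orbits of size divisible by $q^k$, with the parameters $m,m',r$ chosen compatibly as functions of $k$, and keeping in mind that weight spaces of $\theta_\lambda$ over $\O$ need not split stable lattices integrally. I would first work this out for two weights, as in the Heisenberg case (Example \ref{H_example}), to calibrate the exponents.
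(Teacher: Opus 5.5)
\textbf{There is a genuine gap: the step that carries the whole theorem is left open, and the tools you propose for it do not work as stated.}

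Your opening reductions are fine. The convergence $a_{q^i}(A)\underset{p}{\longrightarrow}\binom{\infty}{n-1}_q$ implies the statement; it is even slightly stronger, since it also rules out poles at other roots of unity. Passing to $U_m$-fixed lattices modulo a growing power of $p$ is also legitimate. The growth comes from the fact that, for $m$ large, $U_m$ consists of $p^k$-th powers in $U$ with $k\to\infty$, so $U/\mathrm{Stab}_U(\Lambda)$ has large exponent whenever $\Lambda$ is not $U_m$-fixed; the index $|U_m:\mathrm{Stab}_{U_m}(\Lambda)|$ alone is only guaranteed to be at least $p$.

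The open step is showing that $U_m$-stable non-subalgebras of index $q^i$ cancel modulo $q^k$. Your proposed mechanism fails in three ways:
\begin{itemize}
\item Nothing in the hypotheses supplies enough degree-raising automorphisms $x\mapsto x+\pi^r\delta(x)$. A graded algebra need not be a Lie algebra, so there are no inner automorphisms. In general the only such maps you can rely on are central ones, $x\mapsto x+D(x)$ with $D(A)\subseteq A_l$ and $D(A^2)=0$, and these fix every lattice containing $A_l$. Anything beyond that depends on the structure constants.
\item Even when such automorphisms exist, they do not act on the set you reduced to. For $\delta$ homogeneous of degree $d$ one has $\theta_\lambda(1+\pi^r\delta)\theta_\lambda^{-1}=1+\pi^r\lambda^d\delta$. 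So they preserve $U_m$-fixedness only if $r$ is tuned against $m$ and against the particular lattice, which is exactly the bookkeeping you postpone.
\item The contraction $\sigma$ changes indices and is not surjective, so it cannot transport counts of index-$q^i$ lattices.
\end{itemize}

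The missing idea is that the second action need not consist of automorphisms of $A$ at all. The paper writes a primitive lattice as $D\beta C_\sigma$ and singles out the largest elementary-divisor gap $R=r_{\iota^*}$. If the dilation orbit of $\Lambda_\beta$ is small, Lemmas \ref{val_beta_influenced} and \ref{val_beta-1_influenced} force the entries of $\beta$ and $\beta^{-1}$ linking low-weight and high-weight positions across this gap to be highly divisible. This is the precise form of your \emph{almost graded} observation. Under exactly this condition, the coordinate transvection $\beta\mapsto\beta M_{r^*c^*}(\mu)$ still preserves the weight of the lattice whenever $\nu(\mu)$ exceeds an explicit threshold, even though it is not an algebra automorphism. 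This uses the weighted homogeneity of the $f^m_{k,l}$ (Theorem \ref{any_type_weight_additive_theorem}), and it makes the additive orbits large.

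The resulting dichotomy gives $v_q(\tilde a^w_{q^{I\cdot\overline r}}(A))\ge\lfloor (R+w)/(2(n-1))\rfloor$ (Theorem \ref{any_type_theorem}). Hence $\sum_{(i,w)}\tilde a^w_{q^i}(A)$ converges absolutely. The residue then follows by rearranging with the $p$-adic Fubini theorem (Theorem \ref{fubini}) and comparing with $\O^n$, which has the same primitive lattices. Without some such non-automorphic, weight-preserving action controlled by the size of the dilation orbit, your argument has no way to make the remaining lattices cancel.
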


\begin{remark}
    Class-$2$ nilpotent algebras are automatically graded. Therefore, Theorem \ref{grading_residue_theorem} subsumes Theorem \ref{class2_residue_theorem} when it comes to $\zeta_A(t)$. 
\end{remark}

\noindent Although graded algebras only form a subclass of nilpotent algebras, all nilpotent algebras for which the subalgebra zeta function is known are graded. When it comes to $\zeta_A(t)$ therefore, Theorem \ref{grading_residue_theorem} covers all known data supporting Conjecture \ref{residue_conjecure_Rossmann}. We will prove Theorem \ref{grading_residue_theorem} in Section \ref{graded_algebras_section}.


\section{Residually nilpotent algebras}

\subsection{Proof of Theorem \ref{1modp_nilpotent_algebras} }

Given a finite-dimensional residually nilpotent $\O$-algebra $A$, we want to prove that $a_{q^i}(A) \equiv 1 \bmod q$ and $a^\lhd_{q^i}(A) \equiv 1 \bmod q$. We will only show the former; the proof of the latter is analogous. To this end, we introduce the following notation: $b_{q^j}(K)$ denotes the number of \textit{superalgebras} $H$ of $K$ with co-index $q^j$ \textemdash \ that is:
$$ b_{q^j}(K) \coloneq \# \left\{ H \, \middle| \, K \leq_{q^j} H \leq A \right\}. $$

\begin{lemma}\label{suma=sumb}
    Let $A$ be an $n$-dimensional $\O$-algebra. Then for any $i\in \N$,
    \begin{equation}\label{suma=sumb_eq}
        \sum_{H \leq_{q^i} A } a_{q}(H) = \sum_{K \leq_{q^{i+1}} A} b_q(K) .
    \end{equation}
\end{lemma}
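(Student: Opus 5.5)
This is a double-counting argument. Both sides of \eqref{suma=sumb_eq} count the same finite set of pairs
$$ \mathcal{S}_i \coloneq \left\{ (H,K) \,\middle|\, K \leq_{q} H \leq_{q^i} A \right\}, $$
where $K \leq_q H$ means that $K$ is a subalgebra of $H$ of index $q$. The plan is to count $\mathcal{S}_i$ once by grouping on $H$ and once by grouping on $K$.

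\emph{Grouping by $H$.} Fix a subalgebra $H \leq_{q^i} A$. The pairs in $\mathcal{S}_i$ with first coordinate $H$ correspond to the subalgebras $K$ of $H$ of index $q$. Here $H$ is a full-rank $\O$-submodule of $\O^n$, hence free of rank $n$, and the restricted multiplication makes it an $n$-dimensional $\O$-algebra. Subalgebras of $H$ are just $\O$-submodules of $H$ closed under the multiplication, and these are exactly the subalgebras of $A$ contained in $H$. So the number of such $K$ is $a_q(H)$, and summing over $H$ gives the left-hand side.

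\emph{Grouping by $K$.} Indices are multiplicative in towers, $|A:K| = |A:H|\,|H:K|$, since these are finite $\O$-modules with sizes powers of $q$. So for every pair in $\mathcal{S}_i$ we have $K \leq_{q^{i+1}} A$. Conversely, fix $K \leq_{q^{i+1}} A$ and any subalgebra $H$ with $K \leq_q H \leq A$. Then $|A:H| = q^{i+1}/q = q^i$, so $(H,K)\in\mathcal{S}_i$. Hence the pairs with second coordinate $K$ are counted exactly by $b_q(K)$. Summing over $K$ gives the right-hand side.

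All sums involved are finite, because $a_{q^j}(A)<\infty$ for every $j$. Therefore the two expressions agree.

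There is no real obstacle. The only points needing care are that each $H$ is again an $n$-dimensional $\O$-algebra, so that $a_q(H)$ makes sense in the paper's framework, and the multiplicativity of the index. Both are immediate.
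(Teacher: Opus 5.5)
Your proposal is correct and matches the paper's proof: both sides count the pairs $(K,H)$ with $K \leq_q H \leq_{q^i} A$. The left side groups these pairs by $H$, and the right side groups them by $K$, using multiplicativity of the index to see that $K \leq_{q^{i+1}} A$.
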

\begin{proof} \
    \vspace{-0.25cm}
    \begin{align*}
        \sum_{H \leq_{p^i} A } a_{q}(H) &= \sum_{H \leq_{q^i} A }  \# \left\{ K \, \middle| \, K \leq_q H \leq_{q^i} A \right\} \\
        &=  \# \left\{ (K,H) \, \middle| \, K \leq_q H \leq_{q^i} A \right\} \\
        &= \sum_{K \leq_{q^{i+1}} A }  \# \left\{ H \, \middle| \, K \leq_q H \leq_{q^i} A \right\} = \sum_{K \leq_{q^{i+1}} A} b_q(K)
    \end{align*}
\end{proof}

\begin{lemma}\label{abp=1}
    If $A$ is a residually nilpotent $\O$-algebra, then in Equation \ref{suma=sumb_eq} we have,
    \begin{itemize}
        \item[a)]\label{ap=1} $a_q(H) \equiv 1 \bmod q$
        \item[b)]\label{bp=1} $b_q(K) \equiv 1 \bmod q$
    \end{itemize}
\end{lemma}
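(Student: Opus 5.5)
The plan is to reduce both parts to counting in a \emph{finite nilpotent} algebra over the residue field, or over $\O/\P^m$. I will use one topological fact repeatedly, and prove it first. Let $H$ be a residually nilpotent $\O$-algebra of finite rank, and let $U\subseteq H$ be a submodule of finite index. Then $H^k\subseteq U$ for $k\gg 0$. To see this, note that submodules of $\O^n$ are closed, so the sets $H^k\cap(H\setminus U)$ form a decreasing chain of compact sets. Their intersection is empty, since $\bigcap_k H^k=0$. Hence they are eventually empty. Note also that subalgebras of $A$ are again residually nilpotent.

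\textbf{Part a).} Let $H\le A$ and let $M\le_q H$ be a subalgebra. Then $\pi H\subseteq M$, so $M$ corresponds to a codimension-one subalgebra $\bar M$ of $N:=H/\pi H$. By the fact above, $N$ is a nilpotent $\F_q$-algebra. I claim that any codimension-one subalgebra $\bar M$ contains $N^2$. If not, then $N=\bar M+N^2$, and hence
$$N^2=(\bar M+N^2)^2\subseteq \bar M+N^3 .$$
Iterating gives $N=\bar M+N^k$ for all $k$, so $N=\bar M$ by nilpotency, a contradiction. Conversely, every hyperplane containing $N^2$ is a subalgebra. So $a_q(H)=(q^d-1)/(q-1)$ with $d=\dim N/N^2$. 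Here $d\ge1$, since $N\neq 0$ is nilpotent, and therefore $a_q(H)\equiv 1\bmod q$.

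\textbf{Part b).} Let $K\le A$ have finite index and let $K\le_q H\le A$. By the argument of a) applied to $H$, we get $H^2+\pi H\subseteq K$. In particular $K$ is an ideal of $H$ and $x^2\in K$ for $x\in H$. Define the idealizer
$$I:=\{x\in A : xK+Kx\subseteq K\},$$
which is a subalgebra containing $K$ as an ideal. Then the superalgebras in question are exactly the $K+\O x$ with the following properties: $x\in I\setminus K$, $\pi x\in K$, and $x^2\in K$. Each such $x$ works, because $(x+k)^2\in x^2+K$.

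Now pass to $B:=I/K$. This is a finite algebra, and it is nilpotent. Indeed, $K\supseteq\pi^mA\supseteq \pi^m I$ for some $m$, and $I^k\subseteq\pi^m I$ for large $k$ by the fact above. Thus $b_q(K)$ is the number of $\F_q$-lines $\ell$ in $B[\pi]=\{b\in B:\pi b=0\}$ satisfying $\ell^2=0$.

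Let $C\subseteq B[\pi]$ be the subspace of elements annihilating $B$ on both sides. Every line inside $C$ is counted, which gives $(q^c-1)/(q-1)\equiv 1\bmod q$ lines with $c=\dim_{\F_q}C$. Now take a good vector $x\notin C$ and any $z\in C$. Then
$$(x+z)^2=x^2+xz+zx+z^2=x^2=0,$$
so the good vectors outside $C$ form a union of cosets of $C$. Their number is therefore divisible by $q^c$. Dividing by $q-1$, the number of good lines not inside $C$ is divisible by $q^c$, which is $\equiv 0\bmod q$ as long as $c\ge1$.

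\textbf{Main obstacle.} The crux is showing $C\neq0$, that is, $B\neq 0$. Granting $B\neq0$, let $B^r$ be the last nonzero power of $B$. It annihilates $B$, and its $\pi$-torsion is nonzero, so $C\neq 0$. To get $B\neq 0$, I need $I\supsetneq K$, and I would argue this via the subalgebras $K+A^k$. Since $K\neq A$ but $A^k\subseteq K$ for large $k$, choose $k$ maximal with $A^k\not\subseteq K$. Then
$$A^kK+KA^k\subseteq A^{k+1}\subseteq K,$$
so $A^k\subseteq I$ but $A^k\not\subseteq K$, giving $I\neq K$. The inclusion $A^kK\subseteq A^{k+1}$ uses only $K\subseteq A$. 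I will check that this holds for the paper's convention on powers $A^k$ in a non-associative algebra, taking $A^{k+1}=\sum_{i+j=k+1}A^iA^j$ if needed.
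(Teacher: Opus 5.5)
Your proof is correct, and it takes a different route from the paper in both parts.

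\textbf{Part a).} The paper fixes one index-$q$ ideal $I$ of $H$. It then sorts the remaining index-$q$ subalgebras $K$ into classes of size $q$ according to $I\cap K$. The key step is that a $2$-dimensional nilpotent quotient $H/(I\cap K)$ with two distinct proper subalgebras has all $q+1$ lines as subalgebras. You instead prove a Frattini-type statement: the index-$q$ subalgebras are exactly the hyperplanes containing $H^2+\pi H$. This gives the exact value $a_q(H)=(q^d-1)/(q-1)$ with $d=\dim_{\F_q}H/(H^2+\pi H)$.

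\textbf{Part b).} The paper picks one ideal $J/K$ of order $q$ in $I_A(K)/K$ and groups the other superalgebras into pencils via $J+H$. You identify superalgebras with square-zero lines in $(I/K)[\pi]$ and use translation invariance under the whole two-sided annihilator $C$. This yields the sharper congruence $b_q(K)\equiv (q^c-1)/(q-1) \bmod q^c$.

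\textbf{Comparison.} The paper's argument is more uniform: one pencil mechanism serves both parts, and it needs only a single distinguished line. Yours is sharper. It also makes explicit two facts the paper uses without proof: finite quotients of a residually nilpotent algebra are nilpotent (your compactness lemma), and $I_A(K)\neq K$.

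\textbf{Caveat (shared with the paper).} For an arbitrary non-associative multiplication, the idealizer of a subalgebra need not be closed under multiplication. For example, take basis $x,y,z,k,u$ with $xy=z$, $zk=u$ and all other products zero, and let $K=\O k+\pi A$. Then $x$ and $y$ idealize $K$, but $xy=z$ does not. So ``$B=I/K$ is a nilpotent algebra'' is not justified in this generality. The paper's choice of $J$ relies on the same assumption.

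You do not actually need it. The set of good vectors $G$ and the set $C:=\{\bar z:\pi z\in K,\ zI+Iz\subseteq K\}$ are well defined inside $(I/K)[\pi]$, using only $xK+Kx\subseteq K$ for $x\in I$. Your coset argument then goes through verbatim. Your final paragraph already gives $A^kA+AA^k\subseteq A^{k+1}\subseteq K$. Hence the $\pi$-torsion of the nonzero finite module $(A^k+K)/K$ lies in $C$, so $C\neq 0$ without any appeal to powers of $B$. With the two-sided convention $A^{k+1}=\sum_{i+j=k+1}A^iA^j$, every inclusion you use holds, including $N^kN^k\subseteq N^{k+1}$ in part a).
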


\begin{proof}
    For a), let $H\leq_{q^i}A$ and let $H^2$ denote the subalgebra generated by $H \cdot H \coloneq \{ a \cdot b \ | \ a \in H, b \in H \}$. Since $A$ is residually nilpotent, $H^2 < H$. The multiplication induced on $H/H^2$ is trivial, and therefore $H$ has at least one ideal $I$ of index $q$. Let $K \leq_q H$ be a subalgebra different from $I$ (note that $K \lhd H$ due to residual nilpotency). Then $K\cap I$ is an ideal of $K$ and by the second isomorphism theorem is $K / (I\cap K)$ isomorphic to $(I+K) / I$ which is $H/I$. This implies that $I\cap K$ has index $q$ inside $K$ and therefore index $q^2$ inside $H$. Thus, $H/(I \cap K)$ has size $q^2$ and one easily checks (again, using nilpotency) that it must therefore have either $1$ or $q+1$ proper subalgebras. However, as $H/(I \cap K)$ has at least two \textemdash \ $K/(I\cap K)$ and $I/(I\cap K)$ are \textit{different} subalgebras \textemdash \  we know that $H/(I \cap K)$ must have $q+1$ proper subalgebras. This means that $K$ lies inside a family of $q$ subalgebras $K_1,...,K_q$ of index $q$. If there is another subalgebra $K'\leq_q H$ which does not lie in this family, then the same reasoning for $I \cap K'$ produces a family $K'_1, ..., K'_q$ containing $K'$, which is disjoint from $K$'s family. Repeating this for every subalgebra of index $q$ in $H$ which is not $I$, we see that they lie in equivalence classes of size $q$ defined by the equivalence relation $K \sim K' : \! \iff (I\cap K) = (I\cap K')$. Hence, $a_q(H)$ is equal to $1$ plus $q$ times the number of these equivalence classes, proving that $a_q(H) \equiv 1 \bmod q$.

    For b), let $K \leq_{q^{i+1}}A$. Since $A$ is residually nilpotent, $I_A(K) > K$ (where $I_A(K)$ denotes the idealizer of $K$ inside $A$). Now $I_A(K)/K$ has an ideal $J/K$ of order $q$, since it is a nilpotent $\O$-algebra with order a power of $q$. This implies the existence of an ideal $J \lhd I_A(K)$ containing $K$ such that $[J:K] = q$. Note that though $J \leq A$, it is not necessarily the case that $J\lhd A$. We have proven that $b_q(K)$ is at least $1$. If there is another subalgebra $H \leq_{q^i} A$ containing $K$, then $K\lhd H$ and thus $H \leq I_A(K)$ and $J+H \leq I_A(K)$. By the second isomorphism theorem is $(J+H)/J$ isomorphic to $H/(J\cap H)$ which is $H/K$. Thus $(J+H)/K$ has order $q^2$, and because it has at least two proper subalgebras, it must have precisely $q+1$ proper subalgebras. This means that $H$ is a member of a family $H_1,...,H_q$ all of which contain $K$ as a subalgebra of index $q$. Moreover, given a another superalgebra $H' \geq_q K$ outside of this family, then the same reasoning can be repeated for $JH'$, producing a new family $H'_1,...,H'_q$ disjoint from the family of $H$. Overall, the index-$q$ superalgebras of $K$ not equal to $J$ come in equivalence classes of size $q$, defined by the equivalence relation $H \sim H' :\! \iff J+H=J+H'$. As before, this implies that $b_q(K) \equiv 1 \bmod q$.
\end{proof}

\begin{proof}[Proof for Theorem \ref{1modp_nilpotent_algebras}]
    Reducing the equation in Lemma \ref{suma=sumb} modulo $q$, Lemma \ref{abp=1} tells us that
    $$ \sum_{H\leq_{q^i}A} 1 \equiv \sum_{K\leq_{q^{i+1}}A} 1 \mod q, $$
    which is the same as
    $$ a_{q^{i+1}}(A) \equiv a_{q^i}(A) \mod q. $$
    Since $a_{q^0}(A) = 1$, we know that $a_{q^{i}}(A) \equiv 1 \bmod q$ for all $i\in\N$. 
\end{proof}

 \noindent Theorem \ref{1modp_nilpotent_algebras} implies that Conjecture \ref{pole_exists_conjecture_Rossmann} is true for residually nilpotent algebras.
\begin{corollary}
    Let $A$ be a finite-dimensional, residually nilpotent $\O$-algebra. Then $\zeta_{A}(t)$ and $\zeta^{\lhd}_{A}(t)$ have a pole at $t=1$.
\end{corollary}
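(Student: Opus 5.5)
The corollary should follow from Theorem \ref{1modp_nilpotent_algebras} via the dictionary set up in the introduction, namely Remark \ref{pole_exists_conj_remark} together with Equation \ref{a_to_smallest_pole_eq}. I would make that dictionary explicit and self-contained rather than quote it loosely, since the remark is only stated informally.

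I would argue by contradiction. Suppose $\zeta^*_A(t)=P(t)/Q(t)$, in reduced form, has no pole at $t=1$. By Du Sautoy's result (Equation \ref{Q(T)=prod}), every root of $Q$ has the form $\mu q^{-a_j/b_j}$ with $\mu$ a root of unity. The exponent $a_j/b_j$ is nonnegative, and if $a_j=0$ the pole is a root of unity. The first step is to handle those roots of unity $\mu\neq 1$.

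My plan is to avoid them by choosing a cleaner reformulation. Write the partial fraction expansion as in Equation \ref{a=sum_exp}, so that $a^*_{q^i}(A)=\sum_j R_j(i)\,\lambda_j^i$, where $\lambda_j$ is the reciprocal of a pole, $|\lambda_j|_p = q^{-a_j/b_j}$ in the relevant extension, and the $R_j$ are polynomials in $i$ with coefficients in $\overline{\Q}$.

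The terms with $a_j>0$ tend to $0$ $p$-adically. This holds because $\lambda_j^i$ has valuation growing linearly in $i$, while the coefficients of $R_j$ are fixed and $R_j(i)$ is $p$-adically bounded.

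The remaining terms, with $|\lambda_j|_p=1$, come from poles at roots of unity $\mu$. Their contribution is a quasi-polynomial $S(i)$ in $i$ with algebraic coefficients. If there were no pole at $t=1$, I would need to exclude $S(i)\equiv 1 \bmod q$ for all $i$. This is the main obstacle, because a pole at $t=-1$ alone, for instance, could give a periodic contribution.

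To get around it, I would restrict to a subsequence: take $i$ in a fixed residue class modulo $N=\operatorname{lcm}(b_j)$, and note that the substituted series $\sum_i a_{q^{Ni}}t^i$ has its unit-modulus poles only at $t=1$ exactly when $\zeta^*_A$ has a pole at $1$ (via the standard root-of-unity filter).

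Cleaner still, I would consider $\sum_i a^*_{q^i}(A)t^i$ evaluated on the $p$-adic side. The congruence $a^*_{q^i}(A)\equiv 1 \bmod q$ means $(1-t)\zeta^*_A(t)$, viewed as a $p$-adic power series, converges at $t=1$ only if its coefficients $a^*_{q^i}-a^*_{q^{i-1}}$ tend to zero. I am not sure this route is simpler, so I would fall back on the quasi-polynomial argument.

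In that argument, on each residue class $i\equiv r \bmod N$, $S(i)$ is a polynomial in $i$, and Theorem \ref{1modp_nilpotent_algebras} forces $S(i)\not\to 0$ $p$-adically. The conclusion I need is that the term $\lambda=1$ must genuinely appear. Suppose instead that only $\mu\neq1$ appear. Then averaging $a^*_{q^i}(A)$ over $i$ in a window of length $N$ kills those terms up to $p$-adically small errors, since $\sum_{k=0}^{N-1}\mu^k=0$ for $\mu^N=1$, $\mu\ne1$. The polynomial factors $R_j$ are handled by taking finite differences first.

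On the other hand, by Theorem \ref{1modp_nilpotent_algebras}, the window sums of the coefficients are congruent to $N \bmod q$. The remaining task is to choose the averaging so that $N$ is replaced by a $p$-adic unit, for example by using a Cesàro-type combination with coefficients summing to $1$, which yields a contradiction. This bookkeeping with $p$-adic convergence is where the real care is needed; everything else is immediate from Theorem \ref{1modp_nilpotent_algebras}.
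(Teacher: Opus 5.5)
Your plan is the paper's argument made explicit. The paper's proof consists only of Theorem \ref{1modp_nilpotent_algebras} together with Remark \ref{pole_exists_conj_remark}. You are right that the dictionary read off from Equation \ref{a_to_smallest_pole_eq} only shows that $v_q(a^*_{q^i}(A))\not\to\infty$ forces a pole of $\zeta^*_A(t)$ at \emph{some} root of unity; the paper passes over this point. However, the step you propose for isolating $t=1$ cannot be carried out, so there is a genuine gap exactly where you say the real care is needed. Your root-of-unity filter claim is also false: for $1/(1+t)$ and $N=2$ the even-indexed subseries is $1/(1-t)$, which has a pole at $t=1$ although $1/(1+t)$ does not.

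To retain the information $a^*_{q^i}(A)\equiv 1 \bmod q$, a combination $\sum_k c_k\, a^*_{q^{i+k}}(A)$ must have all $c_k\in\O$. It is then $\equiv C(1) \bmod q$, where $C(x)=\sum_k c_k x^k$, and it annihilates a term $R_\mu(i)\mu^i$ exactly when $(x-\mu)^{\deg R_\mu+1}$ divides $C$. Plain finite differences, i.e.\ a factor $x-1$, are the wrong tool here, since they turn $\equiv 1$ into $\equiv 0$.

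If no nontrivial root of unity of $p$-power order is a pole, then $C=\prod_\mu (x-\mu)^{\deg R_\mu+1}\in\Z[x]$ has $C(1)$ a $p$-adic unit, and your contradiction goes through. If such a $\mu$ is a pole, however, every $C$ with coefficients in $\O$ and $C(\mu)=0$ satisfies $C(1)\equiv C(\mu)=0$ modulo $1-\mu$, and $1-\mu$ lies over $p$. So no combination of unit total weight exists, and in general not even $q\nmid C(1)$ can be achieved.

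Concrete examples confirm this. For $q=2$, take $1/(1+t)$ and $2/(1-2t)-1/(1+t)$; the latter has positive coefficients and constant term $1$. For $q=3$, take $(1+2t)/(1+t+t^2)$. Each has denominator dividing a product as in Equation \ref{Q(T)=prod} and all coefficients $\equiv 1 \bmod q$, yet none has a pole at $t=1$.

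Hence Theorem \ref{1modp_nilpotent_algebras}, combined with rationality and du Sautoy's denominator form, cannot by itself yield the corollary. One needs further information on $\zeta^*_A(t)$ ruling out that its unit-circle part is carried entirely by poles at nontrivial $p$-power roots of unity. Neither your proposal nor the paper's one-line proof provides it.
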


\begin{proof}
    As mentioned in Remark \ref{pole_exists_conj_remark}, $\zeta_{A}(t)$ has a pole at $t=1$ if and only if $v_p(a_{q^i}(A)) \not\to \infty$ as $i \to \infty$. Since by Theorem \ref{1modp_nilpotent_algebras}, $a_{q^i}(L) \equiv 1 \bmod q$ for all $i\in \N$, the sequence $v_p(a_{q^i}(A))$ is constantly $0$. The same goes for $\zeta^{\lhd}_{A}(t)$. 
\end{proof}

\begin{remark}\label{pA_res_nilp_remark}
    For any finite-dimensional $\O$-algebra $A$, the algebra $\pi A$ is residually nilpotent. Therefore, for any such $A$, $a_{q^i}(\pi A) \equiv 1 \bmod q$ and $\zeta_{\pi A}(t)$ has a pole for $t=1$.
\end{remark}

\subsection{Proof of Theorem \ref{class2_residue_theorem}}

Let $A$ be an $n$-dimensional $\O$-algebra. As an $\O$-module, $A$ is isomorphic to $\O^n$. Thus, it makes sense to define $a^{\not\leq}_{q^i}(A)\coloneq a_{q^i}(\O^n) - a_{q^i}(A)$ as the number of sublattices of $A$ that are not subalgebras; that is, submodules of $\O^n$ which are not closed under the multiplication of $A$. Similarly, $a^{\not\lhd}_{q^i}(A)\coloneq a_{q^i}(\O^n) - a^\lhd_{q^i}(A)$ denotes the number of sublattices of $A$ that are not ideals of $A$. We will prove the following.
\vspace{-0.1cm}
\begin{theorem}\label{nonsubalgebras_to_0_theorem}
    Let $A$ be a class-$2$ nilpotent $n$-dimensional $\O$-algebra. Then 
    $$v_q\left(a^{\not\leq}_{q^i}(A)\right) \longrightarrow \infty \ \text{ and } \ v_q\left(a^{\not\lhd}_{q^i}(A)\right) \longrightarrow \infty  \quad \text{ as } \ i \to \infty .$$
\end{theorem}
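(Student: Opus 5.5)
Since $A$ has class $2$, we have $A^2 \subseteq Z(A)$, where $A^2$ denotes the span of all products and $Z(A)$ is the two-sided annihilator. The idea is to parametrize sublattices so that the closure condition becomes a linear condition modulo a power of $\pi$, and then count non-closed lattices in free orbits of a $\pi$-power group.

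\textbf{Step 1: a congruence subgroup acting on non-subalgebras.} Fix a sublattice $\Lambda \leq_{q^i} \O^n$. For $m \in \N$, put $\Gamma_m = 1 + \pi^m \mathrm{End}_\O(\O^n)$. For $m$ large relative to $i$, every $\gamma \in \Gamma_m$ satisfies $\gamma\Lambda \supseteq \pi^i \O^n$, and the orbits of $\Gamma_m$ on $\{\Lambda \leq_{q^i} \O^n\}$ are finite $p$-groups' orbits. I would rather use a more structured group adapted to the multiplication. Let
\[
U = \{\, 1 + \phi \;:\; \phi \in \mathrm{Hom}_\O(A, Z(A)),\ \phi(A) \subseteq \pi^{k} Z(A) \,\},
\]
which is an abelian group. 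For $u = 1+\phi$ we have $u(x)u(y) = xy$, because $\phi$ takes values in the annihilator. Hence, for a lattice $\Lambda$, the product set $u\Lambda \cdot u\Lambda$ equals $\Lambda\cdot\Lambda$. The condition "$\Lambda$ is a subalgebra" reads $\Lambda\Lambda \subseteq \Lambda$, while for $u\Lambda$ it reads $\Lambda\Lambda \subseteq u\Lambda$. So $U$ does not preserve subalgebra-ness, and this crude choice needs refining.

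\textbf{Step 2: a translation structure.} The key observation I would exploit is this: given $\Lambda$, the set of lattices $\Lambda'$ with $\Lambda' + Z = \Lambda + Z$ and $\Lambda' \cap Z = \Lambda \cap Z$ (where $Z = Z(A)$ or $A^2$, saturated) is a torsor under $\mathrm{Hom}\big((\Lambda+Z)/Z,\ Z/(\Lambda\cap Z)\big)$. Within such a fibre all $\Lambda'$ have the same product set $\Lambda'\Lambda' = \Lambda\Lambda$, because the products only depend on $\Lambda'$ modulo $Z$. Moreover $\Lambda'$ is a subalgebra if and only if $\Lambda\Lambda \subseteq \Lambda' \cap Z = \Lambda\cap Z$. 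So subalgebra-ness is \emph{constant on fibres}, and the same holds for ideals, with $A\Lambda + \Lambda A$ in place of $\Lambda\Lambda$. Therefore $a^{\not\leq}_{q^i}(A)$ is a sum, over non-closed fibres, of the fibre sizes
\[
|\mathrm{Hom}((\Lambda+Z)/Z,\ Z/(\Lambda\cap Z))|,
\]
and it suffices to show each such fibre size has $q$-valuation at least $f(i)$ with $f(i)\to\infty$.

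\textbf{Step 3: the valuation estimate (main obstacle).} A fibre has size $q^{N}$, where $N$ is the length of the $\mathrm{Hom}$-module above. It is small only if $(\Lambda+Z)/Z$ is small, i.e. $\Lambda+Z$ is almost all of $A$, or if $Z/(\Lambda\cap Z)$ is small. These cases must be shown to force closure:
\begin{itemize}
\item If $\Lambda \cap Z \supseteq \pi^c Z$ and $\Lambda + Z \supseteq \pi^c A$, then for a non-closed $\Lambda$ the index condition forces $c$ to grow with $i$.
\item Concretely, write the index as $q^i = [A:\Lambda+Z]\,[Z:\Lambda\cap Z]$. Let $Z/(\Lambda\cap Z)$ have elementary divisors with maximum exponent $e$, and $(\Lambda + Z)/Z \cong \O^{n-r}$ with $r = \mathrm{rk}\, Z$. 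Then $N \ge (n-r)\cdot e$. Non-closure means $\Lambda\Lambda \not\subseteq \Lambda\cap Z$, so $e \ge 1$.
\end{itemize}
When $n>r$, the $[Z:\Lambda\cap Z]$ part alone is bounded by $q^{re}$. Hence either $e \ge i/(2r)$ and then $N\to\infty$, or $[A:\Lambda+Z] \ge q^{i/2}$. In the latter case $\Lambda$ mod $Z$ lies deep, so $\Lambda\Lambda \subseteq \pi^{\text{large}}$-multiples of $A^2$. Non-closure then needs $\Lambda\cap Z$ to miss a deep element, which again forces $e$ large.

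Making this dichotomy quantitative is where I expect the real work: it requires comparing the depth of $\Lambda\Lambda$ with the elementary divisors of $\Lambda+Z$ via bilinearity. If needed, I would refine the fibration by replacing $\mathrm{Hom}((\Lambda+Z)/Z,\cdot)$ with $\mathrm{Hom}(\cdot, Z')$ for the saturation $Z'$ of $A^2$, rather than all of $Z$. The degenerate case $n=r$ (abelian $A$) is trivial, since then $a^{\not\leq}_{q^i}(A)=0$.
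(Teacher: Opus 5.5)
Your Step 2 is correct and is exactly the paper's first reduction. With $\Lambda_1=(\Lambda+Z)/Z\leq\O^a$, $\Lambda_2=\Lambda\cap Z$ and $[Z:\Lambda_2]=q^{i_2}$, the fibre over $(\Lambda_1,\Lambda_2)$ is a torsor under $\mathrm{Hom}(\Lambda_1,Z/\Lambda_2)\cong(Z/\Lambda_2)^a$, so it has size exactly $q^{a i_2}$, and closure is constant on fibres. This size does not depend on $\Lambda_1$ at all, since $\Lambda_1$ is free of rank $a$.

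The genuine gap is your reduction to showing that \emph{each} non-closed fibre has size of valuation tending to infinity: this is false. In your ``latter case'' you infer from $[A:\Lambda+Z]\geq q^{i/2}$ that $\Lambda$ mod $Z$ lies deep. But a large index only makes the \emph{sum} $\lambda_1+\dots+\lambda_a$ of the elementary divisor exponents of $\Lambda_1$ large. By contrast, $\Lambda_1\Lambda_1$ and $\Lambda_1A+A\Lambda_1$ are governed by the \emph{smallest} exponent, the homothety $\lambda_a=h(\Lambda_1)$, which can be $0$. Two examples show this:
\begin{itemize}
\item In $\h(\O)$ with $[x,y]=z$, the lattice $\O x+\pi^N\O y+\pi\O z$ has index $q^{N+1}$ and is not an ideal, yet its fibre has size $q^2$.
\item For subalgebras, take the algebra with $[x_1,x_2]=z_1$, $[x_1,x_3]=z_2$ (centre $\O z_1\oplus\O z_2$). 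The lattice $\O x_1+\O x_2+\pi^N\O x_3+\pi\O z_1+\O z_2$ is not closed and has index $q^{N+1}$, but its fibre has size $q^3$.
\end{itemize}
Replacing $Z$ by the saturation of $A^2$ changes nothing, since the two coincide in both examples.

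What is missing is a second source of divisibility: the \emph{number of non-closed pairs} $(\Lambda_1,\Lambda_2)$, not the fibres. Since $\pi^{i_2}Z\leq\Lambda_2$, any modification of $\Lambda_1$ that moves each of its elements by an element of $\pi^{i_2}\O^a$ changes $\Lambda_1\Lambda_1$ and $\Lambda_1\O^a+\O^a\Lambda_1$ only modulo $\pi^{i_2}Z$. Hence it preserves (non-)closure of the pair. The paper uses the transvection $\pi^{\lambda_a}x_a\mapsto\pi^{\lambda_a}x_a+\pi^{i_2+1}\mu x_1$ in a basis adapted to the elementary divisors. When $\lambda_1>i_2$, its orbit has size $q^{\lambda_1-i_2-1}$, where $\lambda_1\geq i_1/a$ is the largest exponent. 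One then splits into two cases:
\begin{itemize}
\item If $i_1\leq a i_2$, the fibre factor alone gives $a i_2\geq\frac{a}{a+1}i$.
\item Otherwise $\lambda_1\geq i_1/a>i_2$ for every such $\Lambda_1$. Fibre and orbit together give valuation at least $i_1/a-i_2-1+ai_2\geq i/a-1$ when $a\geq 2$. For $a=1$ this case contains no non-closed pairs, since $\lambda_1=h(\Lambda_1)<i_2$.
\end{itemize}
In the Heisenberg example this is exactly what absorbs the growth in $N$. The lattices $\O(x+\pi^2\mu y)+\pi^N\O y+\pi\O z$, for $\mu$ modulo $\pi^{N-2}$, are $q^{N-2}$ distinct non-ideals with the same $\Lambda_2$. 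Without such an action on the base $\Lambda_1$, your Step 3 cannot be completed.
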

\noindent The centre of $A$ is defined as $Z(A) \coloneq  \{ z\in A \ | \ \forall a \in A: a\cdot z = z \cdot a = 0 \}$. Note that the centre is a free $\O$-module. We identify $Z(A)$ with $\O^b$ and $A/Z(A)$ with $\O^a$ where $n = a+b$. Then we can view the multiplication of $A$ as an $\O$-linear map $\O^a \times \O^a \to \O^b$. To any sublattice $H\leq_{q^i} A$ we can associate a pair of sublattices $\Lambda_1(H) \leq \O^a , \Lambda_2(H) \leq \O^b$ defined by
    $$ \frac{\Lambda_1(H)}{H\cap Z(A)}  \coloneq  \frac{H}{H\cap Z(A)}, \quad \Lambda_2(H) \coloneq  H \cap Z(A).$$

\noindent We write $i = i_1(H) + i_2(H)$ where $q^{i_1(H)}$ and $q^{i_2(H)}$ are the indices of $\Lambda_1(H)$ and  $\Lambda_2(H)$ in $\O^a$ and $\O^b$, respectively. Such a sublattice $H$ is then a subalgebra, if $(\Lambda_1(H))^2 \leq \Lambda_2$, where $(\Lambda_1(H))^2$ denotes the sublattice of $\O^b$ generated by $\Lambda_1(H) \cdot \Lambda_1(H) \coloneq  \{x\cdot y \ | \ x, y\in \Lambda_1(H) \}$. Similarly, $H$ is an ideal if $\Lambda_1(H) \cdot \O^a \leq \O^b$ and $ \O^a \cdot \Lambda_1(H)  \leq \O^b$. Given a pair of sublattices $\Lambda_1 \leq \O^a, \Lambda_2 \leq \O^b$, the set of sublattices $H \leq A$ for which $\Lambda_1(H) = \Lambda_1$ and $\Lambda_2(H) = \Lambda_2$ is in bijection with $(\O^b / \Lambda_2)^a$. Therefore, the map $H \mapsto(\Lambda_1(H), \Lambda_2(H))$ is $q^{i_2(H)a}$-to-one. Setting 
\vspace{-0.2cm}
\begin{align*}
    \qquad \ c^\leq_{i_1,i_2}(A) &\coloneq \{ (\Lambda_1, \Lambda_2) \ | \ \Lambda_1 \leq_{q^{i_1}} \O^a, \, \Lambda_2 \leq_{q^{i_2}} \O^b, \Lambda_1^2 \leq \Lambda_2 \}, \\
    c^\lhd_{i_1,i_2}(A) &\coloneq \{ (\Lambda_1, \Lambda_2) \ | \ \Lambda_1 \leq_{q^{i_1}} \O^a, \, \Lambda_2 \leq_{q^{i_2}} \O^b, \Lambda_1 \cdot \O^a \leq \Lambda_2, \O^a \cdot \Lambda_1 \leq \Lambda_2 \},
\end{align*}
then
\vspace{-0.2cm}
$$ \ \ \zeta_{A}(t) = \sum_{i_2=0}^\infty (q^at)^{i_2} \sum_{i_1=0}^{\infty} t^{i_1} \# c^\leq_{i_1,i_2}(A)
    \ \text{ and } \ 
    \zeta^\lhd_{A}(t) = \sum_{i_2=0}^\infty (q^at)^{i_2} \sum_{i_1=0}^\infty \#c^\lhd_{i_1,i_2}(A). $$

\noindent Put another way,
\begin{equation}\label{split_aqi_eq}
    a^\leq_{q^i}(A) = \sum_{i_2 = 0}^i q^{a \cdot i_2 } \#c^\leq_{i-i_2,i_2}(A) \ \text{ and } \ a^\lhd_{q^i}(A) = \sum_{i_2 = 0}^i q^{a \cdot i_2 } \#c^\lhd_{i-i_2,i_2}(A).
\end{equation}

\noindent Let $c^{\not\leq}_{i_1, i_2}(L)$ and $c^{\not\lhd}_{i_1, i_2}(L)$ denote the sets of pairs $(\Lambda_1, \Lambda_2)$ with $\Lambda_{1} \leq_{q^{i_1}} \O^a$ and $\Lambda_{2} \leq_{q^{i_2}} \O^b$ such that $(\Lambda_1, \Lambda_2) \notin c^{\leq}_{i_1,i_2}$ and $(\Lambda_1, \Lambda_2) \notin c^{\lhd}_{i_1,i_2}$, respectively. Then,
\begin{equation}\label{nonobjects_equation}
     a^{\not \leq }_{q^i}(A) = \sum_{i_2 = 0}^i q^{a \cdot i_2 } \#c^{\not\leq}_{i-i_2,i_2}(A)
\quad \text{ and } \     a^{\not\lhd}_{q^i}(A) = \sum_{i_2 = 0}^i q^{a \cdot i_2 } \#c^{\not\lhd}_{i-i_2,i_2}(A).
\end{equation}

\noindent In order to prove Theorem \ref{nonsubalgebras_to_0_theorem}, we show that for any $i_2 \in \N$, the valuation of the corresponding summand converges to $\infty$ as $i \to \infty$:
\begin{equation}\label{nonobjects_convergence}
    a \cdot i_2 + v_q(\# c^{\not\leq}_{i-i_2,i_2}(A)) \to \infty \ \text{ and } \ a \cdot i_2 + v_q(\# c^{\not\lhd}_{i-i_2,i_2}(A)) \to \infty.
\end{equation}

\noindent Define $h(\Lambda_1) \coloneq  \max \{ m \ | \ \Lambda_1 \leq \pi^m \O^a \}$. This number is called the homothety of $\Lambda_1$ and we will come back to it in the next subsection. For now, the following suffices.

\begin{lemma}\label{h<i_2}
    If $(\Lambda_1,\Lambda_2) \in c^{\not \leq}_{i_1,i_2}(A)$, then $h(\Lambda_1) < \frac{1}{2} i_2 $. If $(\Lambda_1,\Lambda_2) \in c^{\not \lhd}_{i_1,i_2}(A)$, then 
    $h(\Lambda_1) < i_2. $
\end{lemma}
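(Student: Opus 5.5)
We argue by contraposition. If $h(\Lambda_1)$ is large, then $\Lambda_1$ sits so deep inside $\O^a$ that its products automatically land in $\Lambda_2$, whatever $\Lambda_2$ of index $q^{i_2}$ is chosen. The single observation doing the work is that a sublattice of index $q^{i_2}$ in $\O^b$ contains $\pi^{i_2}\O^b$. To see this, note that $\O^b/\Lambda_2$ is a finite $\O$-module of order $q^{i_2}$, hence of length $i_2$, so it is annihilated by $\pi^{i_2}$. In particular $\pi^{m}\O^b \leq \Lambda_2$ for every $m \geq i_2$.

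For the subalgebra statement, suppose $h(\Lambda_1) = m \geq \frac12 i_2$, so that $\Lambda_1 \leq \pi^m \O^a$. The multiplication $\O^a \times \O^a \to \O^b$ is $\O$-bilinear. Hence for $x = \pi^m x'$ and $y = \pi^m y'$ with $x',y'\in\O^a$ we get $x\cdot y = \pi^{2m}(x'\cdot y') \in \pi^{2m}\O^b$. Therefore $\Lambda_1^2 \leq \pi^{2m}\O^b \leq \pi^{i_2}\O^b \leq \Lambda_2$, using $2m \geq i_2$. So $(\Lambda_1,\Lambda_2)\in c^{\leq}_{i_1,i_2}(A)$, contradicting membership in $c^{\not\leq}_{i_1,i_2}(A)$. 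Thus $h(\Lambda_1) < \frac12 i_2$.

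For the ideal statement, suppose $h(\Lambda_1) = m \geq i_2$. Only one factor now carries the power of $\pi$: $\Lambda_1\cdot\O^a \leq \pi^m(\O^a\cdot\O^a) \leq \pi^m\O^b \leq \pi^{i_2}\O^b \leq \Lambda_2$, and symmetrically $\O^a\cdot\Lambda_1 \leq \Lambda_2$. Hence $(\Lambda_1,\Lambda_2)\in c^{\lhd}_{i_1,i_2}(A)$, a contradiction, and so $h(\Lambda_1)<i_2$.

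There is no real obstacle. The only points needing care are the conventions. First, the product here is the induced map $A/Z(A)\times A/Z(A)\to Z(A)$; this is well defined because $A$ has class $2$, so $A\cdot A\leq Z(A)$ and central elements multiply to zero. Second, the ideal condition in the definition of $c^\lhd$ is $\Lambda_1\cdot\O^a\leq\Lambda_2$, which the argument above verifies.
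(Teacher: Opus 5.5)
Your proof is correct and follows the paper's argument. Both use the containment $\pi^{i_2}\O^b \leq \Lambda_2$ together with bilinearity, $\Lambda_1^2 \leq \pi^{2h(\Lambda_1)}\O^b$ (respectively $\Lambda_1\cdot\O^a,\ \O^a\cdot\Lambda_1 \leq \pi^{h(\Lambda_1)}\O^b$), to show that a large homothety forces membership in $c^{\leq}_{i_1,i_2}(A)$ (respectively $c^{\lhd}_{i_1,i_2}(A)$); you additionally justify $\pi^{i_2}\O^b\leq\Lambda_2$ via the length of $\O^b/\Lambda_2$, which the paper states without proof.
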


\begin{proof}
    Since $[\O^b : \Lambda_2] =   q^{i_2}$, $\pi^{i_2} \O^b \leq \Lambda_2$. If $h(\Lambda_1) \geq \frac{1}{2} i_2 $, then $\Lambda_1^2 \leq \pi^{i_2}\O^b \leq \Lambda_2$, which means that $(\Lambda_1,\Lambda_2) \in c^{\leq}_{i_1,i_2}(A)$. 
    Similarly, if $h(\Lambda_1) \geq i_2$, then $\O^a \cdot \Lambda_1 \leq \pi^{i_2} \O^b \leq \Lambda_2$ and $ \Lambda_1 \cdot \O^a \leq \pi^{i_2} \O^b \leq \Lambda_2$, which means that $(\Lambda_1,\Lambda_2) \in c^{\lhd}_{i_1,i_2}(A)$.
\end{proof}

\noindent Now we define an action by $\O$ on the set of sublattices of $\O^a$. Let $\Lambda_1 \leq \O^a$, then we choose a basis for $\Lambda_1$ denoted by $y_1, \dots , y_a$. We write $y_j = \pi^{\lambda_j} x_j$, where $\lambda_j = \nu(y_j)$ and $x_j$ is primitive. The valuation $\nu$ of a vector over a DVR is defined as the minimum of the valuations of its components, and that a vector is called primitive if at least one of its components has valuation $0$ - i.e. is invertible. Note that $x_1,\dots,x_a$ form a basis for $\O^a$. 

Assume, without loss of generality, that the $\lambda_j$ are in non-increasing order. They are the elementary divisors of $\Lambda_1$ and we will revisit them in Section \ref{graded_algebras_section}. Here, we simply note that $\lambda_a = h(\Lambda_1)$. Consider the following action by $\O$ on $\Lambda_1$: for $\mu \in \O$, define $\Lambda^\mu_1$ as the lattice generated by $y_1, \dots,y_{a-1}$, and $ y_a + p^{i_2+1} \mu \cdot x_1$. 

\begin{lemma}
    For all $\mu \in \O$, $(\Lambda^\mu_1, \Lambda_2) \in c^{*}_{i_1,i_2}(A)$ if and only if $(\Lambda_1,\Lambda_2) \in c^{*}_{i_1,i_2}(A)$.
\end{lemma}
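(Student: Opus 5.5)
The plan is to show that the perturbation $\Lambda_1 \mapsto \Lambda_1^\mu$ changes every product relevant to the conditions defining $c^{\leq}$ and $c^{\lhd}$ only by elements of $\pi^{i_2}\O^b$. Since $[\O^b:\Lambda_2]=q^{i_2}$, we have $\pi^{i_2}\O^b \leq \Lambda_2$, so such changes are invisible modulo $\Lambda_2$ and membership is preserved. (I read the factor $p^{i_2+1}$ in the definition as $\pi^{i_2+1}$; the argument works verbatim with either, since $p \in \pi\O$.)

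The first step is to establish the symmetry of the relation. The perturbation $y_a \mapsto y_a + \pi^{i_2+1}\mu x_1$ is invertible: $\Lambda_1$ is recovered from $\Lambda_1^\mu$ by the same operation with $-\mu$. We also need $\Lambda_1^\mu$ to have index $q^{i_1}$. Writing $y_a + \pi^{i_2+1}\mu x_1$ in the basis $x_1,\dots,x_a$, the generating matrix of $\Lambda_1^\mu$ differs from the diagonal one $\mathrm{diag}(\pi^{\lambda_1},\dots,\pi^{\lambda_a})$ only in one off-diagonal entry, so it is triangular with the same diagonal (up to ordering) and has the same determinant valuation. Hence it suffices to prove one implication, namely that $(\Lambda_1,\Lambda_2)\in c^*_{i_1,i_2}(A)$ implies $(\Lambda_1^\mu,\Lambda_2)\in c^*_{i_1,i_2}(A)$, and then apply it to $-\mu$ for the converse.

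The second step handles the subalgebra case $*=\leq$. Since $\Lambda_1^\mu$ is generated by $y_1,\dots,y_{a-1}$ and $y_a' = y_a + \pi^{i_2+1}\mu x_1$, bilinearity shows that $(\Lambda_1^\mu)^2$ is generated by products $y_j\cdot y_k$, $y_j\cdot y_a'$, $y_a'\cdot y_j$ and $y_a'\cdot y_a'$. Expanding, each of these equals the corresponding product in $\Lambda_1\cdot\Lambda_1$ plus terms of the form $\pi^{i_2+1}\mu\,(x_1\cdot z)$ or $\pi^{i_2+1}\mu\,(z\cdot x_1)$ with $z\in\O^a$, together with a term $\pi^{2i_2+2}\mu^2\,x_1\cdot x_1$. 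All correction terms lie in $\pi^{i_2}\O^b\leq\Lambda_2$, because the multiplication $\O^a\times\O^a\to\O^b$ is $\O$-bilinear and maps into $\O^b$. Therefore $\Lambda_1^2\leq\Lambda_2$ implies $(\Lambda_1^\mu)^2\leq\Lambda_2$.

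The ideal case is the same computation, with the products $y_a'\cdot z$ and $z\cdot y_a'$ for $z\in\O^a$, whose differences from $y_a\cdot z$ and $z\cdot y_a$ are $\pi^{i_2+1}\mu\,(x_1\cdot z)$ and $\pi^{i_2+1}\mu\,(z\cdot x_1)$, both in $\pi^{i_2}\O^b$.

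There is no real obstacle here; the only points needing care are the well-definedness and index preservation of $\Lambda_1^\mu$ and the observation $\pi^{i_2}\O^b\leq\Lambda_2$. This lemma is clearly a preparation for an orbit-counting argument, in which the orbit sizes $[\pi^{\lambda_a}\O : \pi^{i_2+1}\O]$-type quantities (using Lemma \ref{h<i_2}) yield high $q$-valuation.
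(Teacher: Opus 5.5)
Your argument is correct and is essentially the paper's: the perturbation changes every product relevant to $c^{\leq}$ and $c^{\lhd}$ only by elements of $\pi^{i_2}\O^b\le\Lambda_2$, and you additionally make explicit the index preservation and the reverse inclusion, which the paper leaves tacit. One caveat is that your triangularity argument for the index silently needs $a\ge 2$: for $a=1$ the perturbed entry is the diagonal one, and e.g.\ $\lambda_1>i_2+1$, $\mu=1$ lowers $\nu(y_1)$ and hence changes the index, so the statement itself needs $a\ge 2$. This is harmless, since the paper treats $a=1$ separately before it uses the action.
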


\begin{proof}
    Observe that 
    $(\Lambda^\mu_1)^2 \leq \Lambda^2_1 + \pi^{i_2} \O^b$, 
    $\O^a \cdot \Lambda^\mu_1 \leq \O^a \cdot \Lambda_1 + \pi^{i_2} \O^b$ and
    $ \Lambda^\mu_1 \cdot \O^a \leq \Lambda_1 \cdot \O^a + \pi^{i_2} \O^b$. Since $\pi^{i_2} \O^b \leq \Lambda_2$, it follows that $\O^a \cdot \Lambda^\mu_1 \leq \Lambda_2$ if and only if $\O^a \cdot \Lambda_1 \leq \Lambda_2$. Similarly, $\Lambda^\mu_1 \cdot \O^a \leq \Lambda_2$ if and only if $\Lambda_1 \cdot \O^a \leq \Lambda_2$.
\end{proof}

\begin{proof}[Proof of Theorem \ref{nonsubalgebras_to_0_theorem}]

    We prove for every $i_2 \in \N$, that for the valuation of the corresponding summand in Equation \ref{nonobjects_equation} the following holds:
    \begin{equation}\label{ai2+v}
        a \cdot i_2 + v_q\left( \# c^{\not\lhd}_{i-i_2,i_2} (A) \right) \geq \frac{1}{a} i-1.
    \end{equation}
    Choose $(\Lambda_1,\Lambda_2) \in c^{\not\lhd}_{i-i_2,i_2} (A)$. Note that $\lambda_a = h(\Lambda_1)$ and thus by Lemma \ref{h<i_2}, $\lambda_a \leq i_2$. At the same time, $\lambda_1 \geq (i-i_2)/a$. Otherwise, $\sum_j \lambda_j \leq \sum_j \max_j\{\lambda_j\} = a\cdot  \lambda_1 < i-i_2$. But $\sum_{j} \lambda_j = i-i_2$, by assumption.

     From now on, we assume that $a>1$. If not, then $\lambda_1 = \lambda_a$, which means that $i_2 \geq (i-i_2)/a$. Equivalently, $(a+1)i_2 \geq i$, and therefore $a \cdot i_2 + v_q\left( \# c^{\not\lhd}_{i-i_2,i_2} (A) \right) \geq a \cdot i_2 \geq \frac{a}{a+1} i$, which proves the inequality \ref{ai2+v}.

    Consider the action $\Lambda_1 \mapsto \Lambda^\mu_1$ by $\mu \in \O$. If $\lambda_1 > i_2$, the orbit size of this action is $q^{\lambda_1 - i_2-1}$. Therefore, in that case 
    
    \begin{align*}
            a \cdot i_2 + v_q\left( \# c^{\not\lhd}_{i-i_2,i_2} (A) \right) &= \lambda_1 - i_2-1 + a\cdot i_2 \\
            &\geq \frac{i-i_2}{a} + (a-1)i_2-1 \\
        &= \frac{i}{a} + \frac{(a-1)a-1}{a}i_2 -1\geq \frac{i}{a} -1 \, .
    \end{align*}
    If $\lambda_1 \leq i_2$, then $i_1 < a\cdot i_2 $ which means that $i < (a+1)i_2$. Thus, in this case $a \cdot i_2 + v_q\left( \# c^{\not\lhd}_{i-i_2,i_2} (A) \right) \geq a\cdot i_2 \geq \frac{a}{a+1} i $. This proves the inequality \ref{ai2+v}, which implies the convergence \ref{nonobjects_convergence}. As a consequence $v_q\left( a^{\not\lhd}_{q^i}(A) \right) \to \infty$ for $i\to \infty$, as Theorem \ref{nonsubalgebras_to_0_theorem} claims. The proof for $v_q\left( a^{\not\leq}_{q^i}(A) \right) \to \infty$ for $i\to \infty$ is identical.
\end{proof}

\begin{corollary}[Theorem \ref{class2_residue_theorem}]
    For $A$ a class-$2$ nilpotent $\O$-algebra
    $$ a^*_{q^i}(A) \underset{p}{\longrightarrow} \binom{\infty}{n-1}_q \quad \ \text{ as } \ i \to \infty. $$
\end{corollary}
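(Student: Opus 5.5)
The plan is to combine Theorem \ref{nonsubalgebras_to_0_theorem} with the abelian computation of Example \ref{abelian_example}. By definition,
$$ a^*_{q^i}(A) = a_{q^i}(\O^n) - a^{\not *}_{q^i}(A), $$
where $*$ stands for $\leq$ or $\lhd$. By Equation \ref{abelian_example_equation}, $a_{q^i}(\O^n) \underset{p}{\longrightarrow} \binom{\infty}{n-1}_q$. Theorem \ref{nonsubalgebras_to_0_theorem} says that $v_q(a^{\not *}_{q^i}(A)) \to \infty$, which means $a^{\not *}_{q^i}(A) \underset{p}{\longrightarrow} 0$. Since $p$-adic limits are additive, $a^*_{q^i}(A) \underset{p}{\longrightarrow} \binom{\infty}{n-1}_q$, and this is the displayed statement.

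It is worth checking the abelian limit directly. In the closed form for $a_{q^i}(\O^n)$, every term with $c\geq 1$ carries a factor $q^{ci}$, which tends to $0$ $p$-adically. Only the $c=0$ term survives, and it equals $1/\prod_{a=1}^{n-1}(1-q^a)$.

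To get the residue form of Theorem \ref{class2_residue_theorem}, I would use Equation \ref{a=sum_exp}. By du Sautoy, $a^*_{q^i}(A)=\sum_j R_j(i)\,q^{(a_j/b_j)i}$. The summands with $a_j>0$ tend to $0$ $p$-adically. This needs a little care, since $R_j$ is a quasi-polynomial with rational coefficients: its values have $p$-adic valuation bounded below, while $q^{(a_j/b_j)i}$ has valuation growing linearly in $i$. Hence $a^*_{q^i}(A) - R_0(i) \underset{p}{\longrightarrow} 0$, where $R_0$ is the quasi-polynomial attached to the poles with $a_j=0$, namely the roots of unity $t=\mu$.

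The step I expect to be the main obstacle is deducing from $R_0(i)\underset{p}{\longrightarrow}\binom{\infty}{n-1}_q$ that $R_0$ is in fact the constant $\binom{\infty}{n-1}_q$. A non-constant quasi-polynomial over $\Q$ does not converge $p$-adically: each residue class $i\equiv r \bmod b$ gives a polynomial in $i$, and a non-constant polynomial takes values along $i\to\infty$ that do not converge. Concretely, one takes $i$ ranging over $r+bp^k m$ and compares with $r+b$ times a non-multiple of $p$. Once $R_0$ is constant, $R_0(i)=\binom{\infty}{n-1}_q$, so the only pole on the unit circle is at $t=1$. That pole is simple, and $\binom{\infty}{n-1}_q$ is the coefficient of $1/(1-t)$ in the partial fraction decomposition, which is the residue in the normalisation used in Conjecture \ref{residue_conjecure_Rossmann}.
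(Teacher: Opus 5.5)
Your argument is correct and is the same as the paper's: the corollary is read off from Theorem~\ref{nonsubalgebras_to_0_theorem} using $a^*_{q^i}(A) = a_{q^i}(\O^n) - a^{\not *}_{q^i}(A)$ together with the abelian limit in Equation~\ref{abelian_example_equation}. Your extra paragraphs deduce the simple pole and residue by splitting off the poles with $a_j>0$ and then using continuity of polynomials on $\Z_p$, for example $F(x+p^k)\underset{p}{\longrightarrow}F(x)$, to force the root-of-unity part to be constant; they are sound and make explicit the equivalence that the paper only asserts in the introduction, though they are not needed for the displayed statement.
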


\subsection{Homothety and weights}\label{homothety_section}

The existence and conjectured residue of the pole in $t=1$ for $\zeta_A(t)$ for a residually nilpotent algebra $A$ can be viewed in terms of homothety classes. Recall that we denote $\O$'s field of fractions by $\K$. Given a sublattice $\Lambda \leq \O^n$ of rank $n$, we define its homothety class as $[\Lambda] \coloneq \{ \Lambda' \leq \K^n \, | \, \exists m \in \Z : \pi^m \Lambda = \Lambda' \}$. If such a sublattice $\Lambda$ is closed under the multiplication of $A$ \textemdash \ in other words, if $\Lambda$ is a subalgebra of $A$ \textemdash \ then $\pi^m\Lambda$ is also a subalgebra for any $m \in \N$. This means that every homothety class $[\Lambda]$ contains a unique \textit{maximal} subalgebra $\Lambda^{\max} \leq A$, such that there exists no $\Lambda' \in [\Lambda]$ for which $\pi\Lambda' = \Lambda$ and $\Lambda' \leq A$. Let $a^{\max}_{q^i}(A)$ denote the number of maximal subalgebras of $A$ of index $q^i$. Note that $a^{\max}_{q^i}(A) = a_{q^i}(A) - a_{q^{i-n}}(A)$. Therefore, we define $ \zeta^{\max}_{A}(t) \coloneq  \sum_{i=0}^\infty a^{\max}_{q^i}(A)t^i = (1-t^n) \zeta_{A}(t) $.

\subsubsection{Simplicity of the pole at $t=1$}\label{simple_pole_subsubsection}

So far we have proven that if $A$ is an $n$-dimensional, residually nilpotent $\O$-algebra, then $\zeta_{A}(t)$ has a pole in $t=1$. This pole is simple if and only if $(1-t^n)\zeta_{A}(t)$ does not have a pole at $t=1$. This is equivalent to saying that $a^{\max}_{q^i}(A)$ converges $p$-adically to $0$. If this is the case, then the sum $\sum_{i=0}^\infty a^{\max}_{q^i}(A)$ is $p$-adically convergent and its value coincides with the residue at $t=1$.

 Each homothety class $[\Lambda]$ contains a unique \textit{primitive} lattice $\Lambda_0 \leq \O^n$, which is a lattice such that $\pi^{-1}\Lambda_0 \not\leq \O^n$. In \cite{voll_functional}, Voll defines the \textit{weight} of a lattice as the minimal homothety required to turn it into a subalgebra of $A$: $ w(\Lambda) \coloneq \min \{ w \in \N \, | \, \pi^w \Lambda \leq A \} $. Note that for any  lattice $\Lambda$ of index $q^i$, the $i$-th homothety $\pi^i \Lambda$ is certainly a subalgebra of $A$, irrespective of $A$'s multiplicative structure. This means that $w(\Lambda) \leq i(\Lambda)$ for any $\Lambda$. Let $\tilde{a}^w_{q^i}(A)$ denote the number of primitive $\Lambda_0$ with $i(\Lambda_0) = i$ and $w(\Lambda_0) = w$. Then
 $$ (1-t^n)  \zeta_{A}(t) = \sum_{i=0}^\infty a^{\max}_{q^i}(A) t^i = \sum_{\Lambda_0 \text{ primitive}} t^{i(\Lambda_0) + n\cdot w(\Lambda_0)} 
    = \sum_{k=0}^{\infty} c_k(A) \, t^k. $$

\noindent where $ c_k(A) \coloneq \sum_{\substack{ (i,w) \\ i+n\cdot w = k }} \tilde{a}^w_{q^i} (A)$. The pole at $t=1$ is simple if and only if $\left(c_k(A)\right)_{k\in \N}$ converges $p$-adically to $0$. If so, the sum $\sum_{k \in \N} c_{k}(A)$ is $p$-adically convergent and the residue of the pole is equal to $ \frac{1}{n} \sum_{k \in \N} c_{k}(A)$, since

\begin{equation}\label{sumc=nres}
    \sum_{k \in \N} c_{k}(A) = [(1-t^n)  \zeta_{A}(t)]_{t=1} = n [(1-t)  \zeta_{A}(t)] = n \cdot \text{res}_{t=1} \zeta_{A}(t).
\end{equation}

\begin{example}\label{Z2_revisited_example}
    Revisiting Example \ref{Z2_example}, recall that the subalgebra zeta function of $\Z^2_p$ has a simple pole at $t=1$ with residue $\frac{4}{1-q}$. We see this reflected in the $p$-asymptotic behaviour of the coefficients as follows. Firstly, one easily computes that 
    $$ \tilde{a}^w_{p^i}(L) = \begin{cases} 3\varphi(p^w) \ &\text{ for } w < i \\ (p-2)p^{i-1} &\text{ for } w=i \end{cases} $$
    where $\varphi$ denotes the Euler totient function. Thus, if $k=3l+1$ or $k=3l+2$ for some $l\in \N$, then $c_k(L) = \sum_{w=0}^l 3\varphi(p^w) = 3p^{l-1}$. If $k=3l$ (for $l>0$), then $c_k(L) = 3p^{l-1} + (p-2)p^{l-1} = (p+1)p^{l-1}$. Not only do we see that $v_p(c_k(L)) \to \infty$ as $k\to \infty$, we can calculate the cumulative sum:
    \vspace{-0.2cm}
    \begin{align*}
        \sum_{k \in \N} c_k(L) &= 1 +\sum_{l =1}^{\infty} \left(c_{3l}(L) + c_{3l+1}(L) + c_{3l+2}(L) \right) \\
        &= 1 + \sum_{l=1}^{\infty} (p+7)p^{l-1} = \frac{8}{1-p}\ .
    \end{align*}
    This indeed matches the residue (the extra factor $2$ coming from the dimension $n=2$ as in Equation \ref{sumc=nres}).
\end{example} 

\noindent Note that the sum $\sum_{k \in \N} c_{k}(A)$ essentially `counts' all primitive lattices of $\O^n$. That is, its summands are exactly the elements of the set $\left\{\tilde{a}^w_{q^i}(A)\right\}_{(i,w)}$. If we were to sum this set in a different way, we already know its value:
\begin{equation}\label{i_vs_k}
    \sum_{i \in \N} \sum_{w = 0}^i \tilde{a}^w_{q^i}(A) = \sum_{i \in \N} \tilde{a}^w_{q^i}(A) = \sum_{i \in \N} \tilde{a}^w_{q^i}(\O^n) = \binom{\infty}{n-1}_q.
\end{equation}
\noindent This is the proposed residue from Conjecture   \ref{residue_conjecure_Rossmann}. However, we cannot assume that rearranging the sum has no influence on its value. More precisely, we need that
\begin{equation}\label{kiwi_equation}
    \sum_{k \in \N} \left( \sum_{w=0}^{\left\lfloor\frac{k}{n+1}\right\rfloor} \tilde{a}^w_{q^{k-n\cdot w}} (A) \right) = \sum_{i \in \N} \left( \sum_{w=0}^i \tilde{a}^w_{q^i}(A) \right).
\end{equation}

\noindent Despite this Example \ref{Z2_revisited_example}, there are many algebras for which this equation holds. In particular, if we have absolute convergence of $\sum_{(i,w)}\tilde{a}^w_{q^i}(A)$. Then, by Fubini's Theorem, we can rearrange the sums as in Equation \ref{kiwi_equation}.

\begin{theorem}[Fubini ($p$-adic version); see \cite{igusa_intro}, Section 7.2]\label{fubini}
    Let $f: \N \times \N \to \Z_p : (k,l) \mapsto f(k,l) $ be a function such that $\sum_{(k,l) \in \N \times \N} |f(k,l)|_p < \infty$. Then $\sum_{(k,l) \in \N \times \N} f(k,l)$ is well-defined and its value is invariant under any rearranging of the summands.
\end{theorem}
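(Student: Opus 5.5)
The plan is to reduce everything to the ultrametric inequality together with the following consequence of the hypothesis. If $\sum_{(k,l)} |f(k,l)|_p < \infty$, then for every $\varepsilon > 0$ the set
$$F_\varepsilon \coloneq \{(k,l) \in \N\times\N \ | \ |f(k,l)|_p \geq \varepsilon\}$$
is finite. Indeed, each of its elements contributes at least $\varepsilon$ to a convergent sum of nonnegative reals. In other words, $f$ tends to $0$ along the filter of cofinite subsets of $\N\times\N$. Only this weaker property will be used in what follows.

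\emph{Step 1: convergence along any enumeration.} Let $\sigma:\N\to\N\times\N$ be any bijection. Then $f(\sigma(m)) \to 0$ as $m\to\infty$, because $\sigma^{-1}(F_\varepsilon)$ is finite for every $\varepsilon$. Recall that $\Z_p$ is complete and its absolute value is non-archimedean. In this setting a series converges if and only if its terms tend to $0$: for $M > N$ the difference of partial sums satisfies
$$\Bigl|\sum_{N<m\le M} f(\sigma(m))\Bigr|_p \le \max_{N<m\le M}|f(\sigma(m))|_p,$$
so the partial sums are Cauchy. Hence $S_\sigma \coloneq \sum_m f(\sigma(m))$ exists in $\Z_p$.

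\emph{Step 2: independence of the enumeration.} Take two bijections $\sigma,\tau$ and fix $\varepsilon>0$. Choose $N$ so large that both $\sigma(\{0,\dots,N\})$ and $\tau(\{0,\dots,N\})$ contain $F_\varepsilon$. The two $N$-th partial sums share all the terms indexed by $F_\varepsilon$. Their difference is therefore a signed sum of finitely many values $f(k,l)$ with $(k,l)\notin F_\varepsilon$, each of absolute value $<\varepsilon$. By the ultrametric inequality, the two partial sums differ by less than $\varepsilon$, and the same holds for every $N'\ge N$. Letting $N\to\infty$ gives $|S_\sigma - S_\tau|_p \le \varepsilon$ for all $\varepsilon$, hence $S_\sigma = S_\tau$. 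This common value is what we call $\sum_{(k,l)} f(k,l)$.

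\emph{Step 3: rearrangements into blocks, which is the case the paper actually needs.} The rearrangements used later are not single enumerations but iterated sums, such as the grouping by $k = i+nw$ versus the grouping by $i$ in Equation \ref{kiwi_equation}. So I would also prove the following. Let $\N\times\N = \bigsqcup_{j\in\N} B_j$ be a partition into sets $B_j$ (finite or infinite). Then each block sum $g(j)\coloneq\sum_{B_j} f$ exists, $g(j)\to 0$, and $\sum_j g(j) = \sum_{(k,l)} f(k,l)$.
\begin{itemize}
\item Existence of $g(j)$ follows from Step 1 applied to $f|_{B_j}$.
\item For $j$ outside the finite set $\{j \ | \ B_j\cap F_\varepsilon\neq\emptyset\}$, the ultrametric inequality passes to the limit and gives $|g(j)|_p\le\varepsilon$. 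Hence $g(j)\to0$ and $\sum_j g(j)$ converges.
\item To compare with the total sum, choose $J$ so that $F_\varepsilon\subseteq B_0\cup\dots\cup B_J$. Then $\sum_{j\le J} g(j)$ and the full sum each differ from $\sum_{F_\varepsilon} f$ by a limit of sums of terms of size $<\varepsilon$. So both differences are $\le \varepsilon$, and the two quantities agree up to $\varepsilon$.
\end{itemize}

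The main point requiring care is exactly this last step: interchanging a limit with an infinite inner sum. It is handled by observing that all errors are controlled by $\max$ rather than by sums, and that $|\cdot|_p$ is continuous, so bounds of the form $\le\varepsilon$ survive passage to the limit. Everything else is routine bookkeeping with the finite sets $F_\varepsilon$.
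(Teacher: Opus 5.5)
Your argument is correct. The paper gives no proof of this statement at all; it only cites Igusa's book. So your proposal supplies an argument the paper omits rather than competing with one.

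You take the standard non-archimedean route. In the complete ultrametric ring $\Z_p$, summability over a countable index set is equivalent to $f$ tending to $0$ along the cofinite filter. Every enumeration (Step 2) and every block decomposition (Step 3) then gives the same value, because all error terms are bounded by a maximum, and such bounds survive passage to limits.

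Compared with the bare citation, this gains two things.

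First, you prove a stronger statement. The real absolute convergence $\sum|f(k,l)|_p<\infty$ is used only to make the sets $F_\varepsilon$ finite. That weaker condition is exactly what the paper checks before invoking the theorem: ``for any $M$ there are only finitely many $(i,w)$ with $v_p(\tilde{a}^w_{q^i}(A))\le M$''. It is genuinely weaker. For example, take $f$ equal to $p^m$ at $p^m$ distinct indices for each $m\in\N$. Then $f\to 0$ $p$-adically, but $\sum|f|_p=\infty$.

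Second, your Step 3 addresses what Equation \ref{kiwi_equation} actually requires. That equation equates two iterated sums over different block decompositions, one by $k=i+nw$ and one by $i$ (both with finite blocks). The phrase ``invariant under any rearranging of the summands'' does not literally cover this.

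One point is left implicit at the end of Step 3. Your $\varepsilon$-estimate holds for every $J'\ge J$, because $F_\varepsilon\subseteq B_0\cup\dots\cup B_{J'}$ for all such $J'$. This is what lets you pass to $J'\to\infty$ and conclude $\bigl|\sum_j g(j)-\sum_{(k,l)} f(k,l)\bigr|_p\le\varepsilon$. You should state it.
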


\noindent Therefore, what we have to prove is that for any $M\in \N$ there are only finitely many $(i,w) \in \N \times \N$ such that $v_p(\tilde{a}^w_{q^i}(A)) \leq M$. This is true, in particular, when there exists a $\lambda > \R_{>0}$ such that $v_p(\tilde{a}^w_{q^i}(A)) \geq \lambda \cdot i$ for all $i$ large enough.  In Section \ref{graded_algebras_section} we will prove that this is indeed true when $A$ is graded.

\section{Graded algebras}\label{graded_algebras_section}

\noindent The proof that we have absolute convergence for $\sum_{(i,w)}\tilde{a}^w_{q^i}(A)$ whenever $A$ is graded, is based on a correspondence between subalgebras and solutions to polynomial congruences. We outline this correspondence, borrowing the notation used by Voll in \cite[Section 3.1]{voll_functional}.

Let $\Gamma \coloneq \GL_n(\O)$. There is a one-to-one correspondence between sublattices of $\O^n$ and right cosets $\Gamma M$ for $M\in \text{Mat}_n(\O)$. We consider the lattice corresponding to $M$ as the $\O$-span of the rows of $M$. Note that $\det(M)$ is equal to the index $q^i$ of the corresponding lattice. By the Elementary Divisor Theorem there exists, for any coset $\Gamma M$, a diagonal matrix $D$ of non-increasing $\pi$-powers $\pi^{\lambda_1}, \dots, \pi^{\lambda_n}$, such that $ \Gamma M = \Gamma D \gamma $ for some $\gamma \in \Gamma$. Here, $\lambda = (\lambda_1,\dots, \lambda_n)$ is a partition of $i = \nu(\det(M))$. In the following we will assume that $\gamma = \beta \, C_\sigma$, where $\beta$ is a lower unitriangular matrix (that is, lower-triangular with every diagonal entry equal to $1$), $\sigma \in \sym_n$, and $C_\sigma$ the permutation matrix that acts via right multiplication like $\sigma$ (on the columns of $\beta$). We may present $\gamma$ in this way because of the Bruhat decomposition of $\Gamma$ into a partition of so-called Bruhat cells, each corresponding with a permutation $\sigma \in \sym_n$: 
$$ \Gamma = \bigsqcup_{\sigma \in \sym_n} BC_\sigma B, $$
where $B$ is the Borel subgroup of lower-triangular matrices inside of $\Gamma$.

Note that a lattice is primitive if and only if $\lambda_n=0$. We let a tuple $(I,\overline{r})$ record the elementary divisors of a given primitive lattice as follows: $I \coloneq \{ \iota \in [n-1] \ | \ \lambda_\iota > \lambda_{\iota + 1} \}$ and $ \overline{r} \in  \N^I_{>0}$ is defined by $r_\iota \coloneq \lambda_{\iota} - \lambda_{\iota+1}$. Here we are assuming that $n\geq 2$, which we will do from here on.

Consider a primitive lattice $\Lambda$ of type $(I,\overline{r})$ and in the Bruhat cell of $\sigma \in \sym_n$, represented by $D\beta C_\sigma$. Let $\Gamma_{(I,\overline{r})}$ denote the following (non-normal) subgroup of $\Gamma$:
$$ \Gamma_{(I,\overline{r})} \coloneq \{ g \in \Gamma \ | \ Dg \in \Gamma D \}\, . $$
One easily observes that $\Gamma_{(I, \overline{r})} = (D^{-1} \Gamma D) \, \cap \Gamma$ where $D^{-1}$ denotes the inverse of $D$ inside of $\GL_n(\mathbb{K})$ with $\mathbb{K}$ the field of fractions of $\O$. Clearly, $\beta$ is unique up to left multiplication with elements from $\Gamma_{(I,\overline{r})}$. Because of this, we can assume without loss of generality that $\beta$ is lower unitriangular with every lower-diagonal entry $\beta_{rc}$ (where $c<r$) determined modulo $\pi^{\sum_{\substack{ c \leq \iota < r}} r_\iota}$, where we are suppressing the notation $\iota \in I$, which will always be tacitly assumed.

\subsection{Polynomial congruences}\label{poly_cong_subsection}

 In order to determine when $\Lambda$ is a subalgebra, let $R(\overline{x})$ represent the matrix of linear forms in $n$ variables $x_1,\dots,x_n$ given by the multiplication structure of $L$: the entry $(i,j)$ of $R(\overline{x})$ is $l_{ij}(\overline{x}) \coloneq \sum_{q \in [n]} \lambda^q_{ij} x_q$, where the $\lambda^q_{ij}$ are the structure constants of $L$. The lattice $\Lambda$ is closed under multiplication (i.e. \  it is a subalgebra) if and only if for all $m \in [n]$, 
\begin{equation}\label{m_poly_matrix_cong}
    p^{\sum_{\iota < m} r_\iota} D \beta C_\sigma R(\beta^{-1}[m]) C^{-1}_\sigma \beta^t D \equiv 0 \bmod p^{\sum_{\iota \in I} r_\iota }.
\end{equation}

Any of the entries of the matrix on the left-hand side of this congruence are as follows. Define the following polynomial expressions in the entries of $\beta$:
\begin{equation}\label{klm_poly}
    f_{k,l}^m(\beta) \coloneq \sum_{i,j \in [n]} \beta_{ki} \beta_{lj} l_{ij}(\beta^{-1}[m]) 
\end{equation}
Then for the entry in position $(k,l)$ of \ref{m_poly_matrix_cong} we get
\begin{equation}\label{klm_poly=0}
    \pi^{ \sum_{\iota < m} r_\iota + \sum_{\iota \geq k} r_{\iota} + \sum_{\iota \geq l} r_\iota}  f_{k,l}^{m}(\beta) \equiv 0 \bmod \P^{\sum_{\iota \in I} r_\iota}
\end{equation}
Thus a primitive lattice, represented by $D\beta C_\sigma$, is a subalgebra if and only if for every $k,l,m \in [n]$, this last congruence \ref{klm_poly=0} is satisfied. For a non-primitive lattice $\pi^{r_n} \Lambda$, where $\Lambda$ is primitive and $r_n \in \N_{>0}$, closedness under multiplication is satisfied if and only if 
$$ \pi^{r_n} \cdot \pi^{ \sum_{\iota < m} r_\iota + \sum_{\iota \geq k} r_{\iota} + \sum_{\iota \geq l} r_\iota}  f_{k,l}^{m}(\beta) \equiv 0 \bmod \P^{\sum_{\iota \in I} r_\iota} $$
which is simply condition \ref{klm_poly=0}, weakened by a factor $\pi^{r_n}$. Note that this means that for any primitive $\Lambda$, if $r_n$ is large enough, these polynomial congruences can always be satisfied. This reflects the fact that for any homothety class, there are only finitely many members that are not subalgebras and that for any subalgebra, all homotheties are also subalgebras.

For a given type $(I,\overline{r})$ and permutation $\sigma \in \sym_n$, the primitive lattice is uniquely determined by (the lower-diagonal entries of) $\beta$. Thus, for a fixed type $(I,\overline{r})$ and permutation $\sigma$, we will often write $\Lambda_\beta$ to denote the primitive lattice associated with $\beta$. Whether or not $\Lambda_\beta$ is a subalgebra is then fully decided by the lower-diagonal entries $\beta_{rc}$.

\subsubsection*{Some notation}\label{splitting_notation_paragraph}

Recall from \ref{simple_pole_subsubsection} that $\tilde{a}^w_{q^i}(A)$ denotes the number of primitive lattices of index $q^i$ and weight $w$. Note that $\sum_{w=0}^i \tilde{a}^w_{q^i}(A) = \tilde{a}_{q^i}(A)$. Denote the number of primitive lattices with elementary divisor type $(I,\overline{r})$ by $\tilde{a}_{q^{I\cdot \overline{r}}}(A)$, and those of weight $w$ by $\tilde{a}^w_{q^{I\cdot \overline{r}}}(A)$. 

\subsection{Graded algebras and weighted homogeneity}

\begin{definition}\label{grading_definition}
    An $\O$-algebra $A$ is graded when it can be endowed with a decomposition into free $\O$-modules : $ A = \bigoplus_{j \in \N_{>0}} A_j$ such that $ A_k \cdot A_l \subseteq A_{k+l} $ for all $k,l \in \N_{>0}$.
\end{definition}

\noindent Let $d_j \coloneq  \dim (A_j)$. Note that since we are always assuming that $A$ is finite-dimensional, all but finitely many of the $A_j$ are trivial \textemdash \ i.e. $d_j=0$ for $j$ sufficiently large. Let $l \coloneq  \max \{j \ | \ d_j \neq 0\}$ denote the length of the grading. Note that if $A$ has a grading of length $l$, then $A$ is nilpotent of class at most $l$.

The main property of graded algebras that we will use is that they are \textit{homogeneous}:  the automorphism group of $A$ contains a subgroup isomorphic to $\O^\times$. Indeed, fix an ordered basis $( v_{11}, \dots, v_{1d_1}, \dots , v_{l1},\dots v_{ld_l} )$ for $A$ such that for every $j \in [l]$, the vectors $v_{j1},\dots,v_{jd_j}$ form a basis for $A_j$. There exists a non-decreasing set of weights \textit{weights} $w_1,...,w_n \in \N_{>0}$ such that the $\O$-linear map $\O^n \to  \O^n$ given by the matrix  $\T_\lambda \coloneq \text{diag} (\lambda^{w_1}, \dots, \lambda^{w_n}) $ is an isomorphism of $A$ for every $\lambda\in \O^\times$. The weights are defined as

$$ w_i \coloneq \min \left\{ k \in \N \ | \ \sum_{j = 1}^k \dim A_j \geq i \right\}. $$

\noindent Because the $\T_\lambda$-maps are automorphisms of $A$, $\T_{\lambda} x \cdot \T_\lambda y = \T_\lambda (x \cdot y)$ for all $x,y \in A$. Looking at the polynomials \ref{klm_poly} for a given $\sigma \in \sym_n$, this has the following important consequence: the support of $l_{ij}(\overline{x})$ consists of $x_q$ such that $w_{\sigma(i)} + w_{\sigma(j)} = w_{\sigma(q)}$. This reflects the fact that for any $x,y \in A$, the weight of their product must be the sum of their weights.

\begin{example}
    Any class-$2$ nilpotent $\O$-algebra $A$ is graded. Let $Z(A)$ denote $A$'s centre. Set $A_1 \coloneq A / Z(A)$ and $A_2 \coloneq Z(A)$. 
\end{example}

\begin{example}
    The free class-$c$ nilpotent Lie algebra on $d$ generators $F_{c,d}$ is graded. Indeed, let $x_1,\dots,x_d$ be a set of generators. Set $A_1 = \langle x_1,\dots,x_d \rangle$ as a free $d$-dimensional $\O$-module, and let $A_l$ for $l>1$ be inductively defined as $[A_{l-1}, A_1]$.  
\end{example}

\begin{example}
    Any nilpotent Lie algebra $L$ of dimension at most $6$ is graded (see \cite{quantization_nilpotent_Lie}, Remark 3.1.6 (2)) . There exists a $7$-dimensional (class-$6$) nilpotent Lie algebra (\cite{nilpotent_Lie_analysis}, Chapter 1 $\S$3, Example 2). However, its zeta function is unknown.
\end{example}

\noindent Our rationale for imposing that $A$ be graded is that this implies that the polynomials \ref{klm_poly} are weighted homogeneous. 

\begin{definition}\label{weighted_homo_def}
    A polynomial $f(x_1,...,x_m)$ over a ring $R$ is called \textit{weighted homogeneous} of degree $d \in \Z$ if there exist $w_1, ...,w_m \in \Z$, not all equal to $0$, such that for every $\lambda \in R^\times$,
    $$ f(\lambda^{w_1} x_1, \dots, \lambda^{w_m}x_m) = \lambda^d f(x_1,\dots, x_m). $$
    Equivalently, for every monomial $g$ in the support of $f$ we have
    $ \sum_{j=1}^m \text{mult}_{g}(x_j) \cdot w_j = d$,
    where $ \text{mult}_{g}(x_j) $ denotes the multiplicity of $x_j$ in $g$.
\end{definition}

\begin{proposition}
    If $A$ is a graded algebra, then the polynomials expressions \ref{klm_poly} are weighted homogeneous.
\end{proposition}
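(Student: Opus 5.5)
We will exploit the torus action $\T_\lambda = \mathrm{diag}(\lambda^{w_1},\dots,\lambda^{w_n})$, $\lambda\in\O^\times$, which by the discussion above consists of automorphisms of $A$. The plan is to assign a weight to each variable $\beta_{rc}$ (the lower-diagonal entries of $\beta$), namely $\mathrm{wt}(\beta_{rc}) \coloneq w_{\sigma(c)} - w_{\sigma(r)}$ (or the analogous difference dictated by the conjugation $C_\sigma$; the sign convention is fixed by the computation below). We then show that for fixed $\sigma$ each $f^m_{k,l}$ satisfies $f^m_{k,l}(\lambda\cdot\beta) = \lambda^{d_{k,l,m}} f^m_{k,l}(\beta)$ with $d_{k,l,m} = w_{\sigma(k)}+w_{\sigma(l)}-w_{\sigma(m)}$, up to the same convention. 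If all $w_i$ coincide the weights on the $\beta_{rc}$ vanish, so we allow $\beta$-weights to be zero but note the degree-based formulation still holds; to meet Definition \ref{weighted_homo_def} literally we may add a common shift, using that $f^m_{k,l}$ is a sum of terms of the same total polynomial degree in the entries of $\beta$ and $\beta^{-1}$.

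The key step is a conjugation identity. Set $T=C_\sigma\T_\lambda C_\sigma^{-1}$, which is diagonal with entries $\lambda^{w_{\sigma(i)}}$ in a permuted order. Then define $\beta^\lambda \coloneq T\beta T^{-1}$. This matrix is again lower unitriangular, with entries $\beta^\lambda_{rc} = \lambda^{w_{\sigma(r)}-w_{\sigma(c)}}\beta_{rc}$, and its inverse is $T\beta^{-1}T^{-1}$. So the substitution $\beta\mapsto\beta^\lambda$ is exactly a weighted rescaling of the variables. Next we rewrite the automorphism property $\T_\lambda x\cdot\T_\lambda y=\T_\lambda(x\cdot y)$ in terms of the structure matrix. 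In the $\sigma$-permuted coordinates used in \eqref{klm_poly}, the stated support condition ($l_{ij}$ involves only $x_q$ with $w_{\sigma(i)}+w_{\sigma(j)}=w_{\sigma(q)}$) is equivalent to
\[
l_{ij}(T^{-1}x) = \lambda^{-w_{\sigma(i)}-w_{\sigma(j)}}\, l_{ij}(x).
\]

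We then substitute into \eqref{klm_poly}:
\[
f^m_{k,l}(\beta^\lambda)=\sum_{i,j}\lambda^{w_{\sigma(k)}-w_{\sigma(i)}}\beta_{ki}\,\lambda^{w_{\sigma(l)}-w_{\sigma(j)}}\beta_{lj}\; l_{ij}\big(T\beta^{-1}T^{-1}[m]\big).
\]
Since $T^{-1}$ applied to the $m$-th standard vector, or the appropriate row or column extraction $[m]$, contributes a factor $\lambda^{-w_{\sigma(m)}}$, we get $T\beta^{-1}T^{-1}[m]=\lambda^{-w_{\sigma(m)}}\,T(\beta^{-1}[m])$. Linearity of $l_{ij}$ together with the displayed homogeneity then gives $l_{ij}(T\beta^{-1}[m])=\lambda^{w_{\sigma(i)}+w_{\sigma(j)}}l_{ij}(\beta^{-1}[m])$. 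The factors $\lambda^{w_{\sigma(i)}}$ and $\lambda^{w_{\sigma(j)}}$ cancel term by term, leaving the uniform factor $\lambda^{w_{\sigma(k)}+w_{\sigma(l)}-w_{\sigma(m)}}$. This proves weighted homogeneity as a polynomial identity in the $\beta_{rc}$, because the entries of $\beta^{-1}$ are polynomials in the $\beta_{rc}$ since $\beta$ is unitriangular.

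The main obstacle is bookkeeping rather than ideas. We must pin down precisely whether $\beta^{-1}[m]$ denotes a row or a column, and how $C_\sigma$ permutes the indices in \eqref{m_poly_matrix_cong}, so that the torus $T$ acts diagonally in the right coordinates and the signs of the weights come out consistently. Once the convention is fixed by checking the identity $C_\sigma R(x) C_\sigma^{-1}$ against $\T_\lambda$, the cancellation above is mechanical. A secondary point is the nondegeneracy clause ``not all weights zero'' in Definition \ref{weighted_homo_def}. If the grading has length $1$ (abelian $A$), all $f^m_{k,l}$ vanish and the claim is vacuous. Otherwise some $w_{\sigma(r)}\neq w_{\sigma(c)}$ for a relevant pair, or else we simply use the nonzero common weight allowed by the definition.
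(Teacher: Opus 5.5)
Your proposal is correct and follows essentially the paper's route: substitute the weighted rescaling $\beta_{rc}\mapsto\lambda^{\pm(w_{\sigma(r)}-w_{\sigma(c)})}\beta_{rc}$ into $f^m_{k,l}$ and use the support condition on $l_{ij}$ to cancel the dependence on $i,j$. Your degree $w_{\sigma(k)}+w_{\sigma(l)}-w_{\sigma(m)}$ is the paper's $w_{\sigma(m)}-w_{\sigma(k)}-w_{\sigma(l)}$ after $\lambda\mapsto\lambda^{-1}$, and writing the rescaling as conjugation by the diagonal $T$ usefully justifies how $\beta^{-1}$ transforms, a step the paper leaves implicit. The only slip is the ``common shift'' fallback, which would fail because entries of $\beta^{-1}$ are not homogeneous polynomials in the $\beta_{rc}$; it is unnecessary anyway, since equal weights force $A$ to be abelian and hence $f^m_{k,l}=0$, as you note at the end.
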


\begin{proof}
    The weight associated with entry $(r,c) \in [n]^2$ is $w_{\sigma(c)} - w_{\sigma(r)}$. Indeed, for $k,l,m \in [n]$, we have the polynomial expressions
    $$ f^m_{k,l}(\beta) \coloneq \sum_{i,j \in [n]} \beta_{ki} \beta_{lj} l_{ij} (\beta^{-1}[m]), $$
    and under the map $\beta_{r,c} \longmapsto  \lambda^{w_{\sigma(r)} - w_{\sigma(c)}} \beta_{r,c}$ this transforms into
    \begin{align*}
        &\sum_{i,j \in [n]} \lambda^{w_{\sigma(i)} - w_{\sigma(k)}}\beta_{ki} \lambda^{w_{\sigma(j)} - w_{\sigma(l)}} \beta_{lj}  (\lambda^{-1})^{w_{\sigma(m)} - (w_{\sigma(i)} + w_{\sigma(j)})} l_{ij} (\beta^{-1}[m])\\ &= \lambda^{w_{\sigma(m)} - w_{\sigma(k)} - w_{\sigma(l)}} f^m_{k,l}(\beta).
    \end{align*}
\end{proof}

\subsection{Proof of Theorem \ref{grading_residue_theorem}}\label{any_type_subsection}

In this subsection we fix an elementary divisor type $(I,\overline{r})$, define $R \coloneq \max_{\iota \in I} \{ r_\iota \}$, and fix a $\iota^* \in I$ such that $R = r_{\iota^*}$. We will prove the following.
\begin{theorem}\label{any_type_theorem}
    For an $n$-dimensional graded  $\O$-algebra $A$, we have for all $w\in \N$,
    $$ v_q(\tilde{a}^w_{q^{I\cdot \overline{r}}}(A)) \geq \left\lfloor \frac{R+w}{2(n-1)} \right \rfloor.  $$
\end{theorem}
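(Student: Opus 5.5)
The bound will come from a free group action on the set being counted. Fix the type $(I,\overline{r})$ and the weight $w$. I would build a group $G$ of order $q^{\lfloor (R+w)/(2(n-1))\rfloor}$, or larger, acting on primitive lattices of type $(I,\overline{r})$, such that three things hold: every orbit has full size, the type is preserved, and the weight is preserved. Then $\tilde{a}^w_{q^{I\cdot\overline{r}}}(A)$ is a sum of orbit sizes, each divisible by $|G|$, which gives the claim.

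The natural source of such an action is the torus $\T_\lambda$, $\lambda\in\O^\times$, coming from the grading. Since $\T_\lambda$ is an automorphism of $A$, it preserves subalgebras. It also commutes with homotheties, so it preserves weights. Being an automorphism of $\O^n$, it preserves elementary divisor types. So $\O^\times$, and in particular the subgroup $1+\P^k$, acts on the set being counted. I would work Bruhat cell by Bruhat cell, since $\T_\lambda$ is diagonal and therefore preserves cells. In coordinates, the action sends $\beta_{rc}\mapsto\lambda^{w_{\sigma(r)}-w_{\sigma(c)}}\beta_{rc}$, as in the proof of the weighted homogeneity proposition. Each entry $\beta_{rc}$ is determined modulo $\pi^{\sum_{c\le\iota<r}r_\iota}$, and the entries spanning position $\iota^*$ are determined modulo at least $\pi^R$.

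The torus alone does not act freely: on the diagonal-type lattices it fixes everything, and entries with equal weights are not moved. So I would combine it with the unipotent-type perturbations used in the class-$2$ proof, namely adding $\pi^{N}\mu$ times one basis vector to another. Such a translation $\beta_{rc}\mapsto\beta_{rc}+\pi^N\mu$ changes $f^m_{k,l}$ only by terms divisible by $\pi^N$. Using \eqref{klm_poly=0}, this change is harmless whenever $N$ exceeds the slack between the left-hand exponent and $\sum_\iota r_\iota$. For a lattice of weight $w$, that slack is controlled by $w$: the congruences already fail at homothety $w-1$, so some $f^m_{k,l}(\beta)$ has valuation bounded in terms of the total degree. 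For the actual argument, pick an entry $\beta_{rc}$ with $c\le\iota^*<r$ and translate it by multiples of $\pi^{N}$ with $N\approx R/2$ (the "half" mirrors the quadratic nature of the $f$'s, as in Lemma \ref{h<i_2}). The translations form a group of order $q^{R-N}$, acting freely because $\beta_{rc}$ is defined modulo $\pi^{R}$. They preserve subalgebra-ness of every homothety $\pi^{u}\Lambda_\beta$, hence the weight, provided $N$ is at least the relevant exponent gap.

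The $w$-contribution comes from the other side. A large weight forces $\beta$ to satisfy congruences only poorly, which I expect to give extra freedom in some entry. Pigeonholing over the at most $n-1$ consecutive gaps then loses the factor $n-1$. Averaging between the $R$-part and the $w$-part, by choosing whichever is at least $(R+w)/2$, produces the denominator $2(n-1)$.

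The main obstacle is the middle step: showing that the perturbations genuinely preserve the weight exactly, and not just subalgebra-ness at large homothety, while still acting freely with the stated orbit size. The proposition on weighted homogeneity should enter here. The torus moves $\beta$ in a way that exactly rescales each $f^m_{k,l}$ by a unit, so the translation needs only to handle the entries fixed by the torus. I would first try to prove that orbits of the combined group $\{\T_\lambda\}\ltimes\{\text{translations}\}$ have size at least $q^{\lfloor (R+w)/(2(n-1))\rfloor}$. To do so I would bound the stabilizer via the valuations of the $\beta_{rc}$ in the chain $c\le\iota^*<r$, and fall back to a case split on whether the relevant valuation is below or above $(R+w)/2$.
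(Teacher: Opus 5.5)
Your proposal has a genuine gap. It sits exactly at the step you flag as the main obstacle, and the heuristic you offer for that step contradicts your own criterion.

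Suppose you translate $\beta_{rc}$, with $c\le\iota^*<r$, by $\pi^N\mu$. Look at the congruence (\ref{klm_poly=0}) indexed by $(k,l,m)=(r,n,1)$. Its prefactor is $\pi^{\sum_{\iota\ge r}r_\iota}$, so its slack is $\sum_{\iota<r}r_\iota$. This is at least the modulus $\sum_{c\le\iota<r}r_\iota$ to which $\beta_{rc}$ is determined.

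The change in $f^1_{r,n}$ is $\pi^N\mu$ times an expression in the other entries of $\beta$ and $\beta^{-1}$, and you have no control over its valuation. Your criterion therefore forces $N\ge\sum_{\iota<r}r_\iota-w$. With that $N$, the translation orbit has valuation at most $w$. So the $R$-contribution disappears, and ``$N\approx R/2$'' has no justification.

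Two further points go wrong. Your remark that a weight-$w$ lattice fails the congruences at homothety $w-1$ only gives an \emph{upper} bound on some valuation, which cannot help prove invariance. In the paper, $w$ enters simply as $w$ extra units of slack in the congruences for $\pi^w\Lambda_\beta$. Also, since translations need not preserve weight, no fixed group generated by the $\T_\lambda$ and translations acts on the set of weight-$w$ lattices of type $(I,\overline{r})$. The single-free-action framing is therefore not available.

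The missing idea is a per-lattice dichotomy, in which the dilation action supplies the valuation information the additive action needs. Put $v=v_q(\#\text{orbit}(\beta))$ for the $\O^\times$-orbit.

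\begin{itemize}
\item Lemmas \ref{val_beta_influenced} and \ref{val_beta-1_influenced} show that a small orbit forces certain entries to have valuation at least about $R-\varepsilon(\beta)$, where $\varepsilon(\beta)\approx(n-1)v$. These are the entries $\beta_{lj}$ and $(\beta^{-1})_{sm}$ with one index $\le\iota^*$, the other $>\iota^*$, and weights strictly separated across $\iota^*$. The proof is an induction along $l=\iota^*+1,\dots,n$ losing $v+1$ per step. This, not pigeonholing over gaps, is where $n-1$ comes from.
\item The additive action is not a translation of an arbitrary entry. It is right multiplication by the transvection $M_{r^*c^*}(\mu)$ at the extremal-weight position: $r^*>\iota^*$ of minimal weight and $c^*\le\iota^*$ of maximal weight.
\item Because the grading weights are positive, every $\mu$-linear term of $f^m_{k,l}(\beta M_{r^*c^*}(\mu))-f^m_{k,l}(\beta)$ contains a factor $\beta_{ir^*}$ or $(\beta^{-1})_{c^*j}$. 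That factor is bounded via the additive orbit valuation $\delta(\beta)$ (Lemmas \ref{val_beta_added}, \ref{val_beta-1_added}). Each such term also contains one of the cross-$\iota^*$ entries above.
\item This yields Theorem \ref{any_type_weight_additive_theorem}: the weight is preserved once $\nu(\mu)\ge\max\{\delta(\beta)-\frac{R+w}{2},\,\delta(\beta)-(R+w-\varepsilon(\beta))\}$.
\end{itemize}

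The proof then splits into two cases. If $\varepsilon(\beta)>\frac{R+w}{2}$, the $\O^\times$-orbit has valuation at least $\lfloor\frac{R+w}{2(n-1)}\rfloor$. If $\varepsilon(\beta)\le\frac{R+w}{2}$, the weight-preserving orbit under $\mu$ with $\nu(\mu)\ge\delta(\beta)-\frac{R+w}{2}$ has valuation at least $\frac{R+w}{2}$.

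Your closing case split gestures in this direction. Without the small-orbit-implies-high-valuation lemmas and the extremal choice of $(r^*,c^*)$, however, nothing bounds the perturbation.
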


\subsubsection{Multiplicative action: dilations}\label{mult_subsection}

We let $\O^\times$ act via the group of dilation automorphisms induced by the grading on $A$. That is, for $\lambda \in \O^\times$, we have
$$ \Gamma M \longmapsto \Gamma M \cdot \T_\lambda $$
Given a $\sigma \in \mathcal{S}_n$, for the lattices $\Lambda_\beta$ associated with the Bruhat cell of $\sigma$, this action corresponds to
\begin{equation}\label{mult_action}
    \Lambda_\beta \longmapsto \Lambda_{\beta'}, \ \text{ where } \ \beta' \coloneq (\T_\lambda^\sigma)^{-1} \beta\,{\T_\lambda^\sigma}
\end{equation}
since $\T_\lambda^\sigma \in \Gamma_{(I,\overline{r})}$. Consider the orbit $\O^\times \cdot \Gamma M$. Any given $\lambda \in \O^\times$ stabalizes $\Gamma M$ if and only if $\beta \T_\lambda^\sigma \beta^{-1} \in \Gamma_{(I,\overline{r})} $. Thus
\begin{equation}\label{stab_lambda}
    \text{Stab}_{\O^\times} (\Lambda_{\beta}) = \{ \lambda \in \O^{\times} \, | \, \forall (r,c) \in [n]^2 : \ \sum_{k = c}^r \lambda^{w_{\sigma(k)}} \beta_{rk} (\beta^{-1})_{kc} \equiv 0 \bmod \P^{\sum_{c\leq \iota < r} r_\iota}  \}.
\end{equation}

\noindent Denote the orbit of $\Lambda_\beta$ under the action by $\O^\times$ as $\text{orbit}(\beta)$. We will be interested in $v_q(\# \text{orbit}(\beta))$. In particular, if $\stab_{\O^\times}(\beta) \leq \O^\times+\pi^{v} \O$ for some positive integer $v$, then by the Orbit-Stabilizer Theorem $v_q(\#\text{orbit}(\beta)) \geq v-1$. Indeed,
\begin{align*}
    v_q(\#\text{orbit}(\beta)) &= v_p([\O^\times : \stab_{\O^\times}(\beta)]) \\
    &= v_q([\O^\times : \O^\times + \pi^v \O]) + v_{q}([\O^\times: \stab_{\O^\times}(\beta)]) \\
    &\geq v_q([\O^\times : \O^\times + p^v \O]) = v-1 \, .
\end{align*}

\begin{lemma}\label{beta_induction}
For any $r,c \in [n]^2$ with $r>c$, we have
    \begin{align*}
        (\beta \T^\sigma_\lambda \beta^{-1})_{rc} &=  \beta_{rc}(\lambda^{w_{\sigma(c)}} - \lambda^{w_{\sigma(r)}}) +   \sum_{ c < k < r }  (\beta \T^\sigma_\lambda \beta^{-1})_{rk} \beta_{kc}\\
        &=  (\beta^{-1})_{rc}(\lambda^{w_{\sigma(c)}} - \lambda^{w_{\sigma(r)}}) +  \sum_{ c < k < r } (\beta^{-1})_{rk}  (\beta \T^\sigma_\lambda \beta^{-1})_{kc} \, .
    \end{align*}
\end{lemma}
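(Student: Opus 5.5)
The plan is to derive both identities from the two intertwining relations satisfied by the conjugate. The one point I cannot settle in advance is the sign convention, and I flag below that a direct computation does not reproduce the signs as printed.

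Write $X \coloneq \beta\,\T^\sigma_\lambda\,\beta^{-1}$ and $t_k \coloneq \lambda^{w_{\sigma(k)}}$, so that $\T^\sigma_\lambda=\text{diag}(t_1,\dots,t_n)$. The identity $X\beta=\beta\,\T^\sigma_\lambda$ will give the first line, and the identity $\beta^{-1}X=\T^\sigma_\lambda\,\beta^{-1}$ will give the second. The only structural input is triangularity. Since $\beta$ and $\beta^{-1}$ are lower unitriangular and $\T^\sigma_\lambda$ is diagonal, $X$ is lower triangular with $X_{kk}=t_k$. Consequently, in any entry $(r,c)$ with $r>c$ of a product of these matrices, only indices $c\le k\le r$ contribute.

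For the first line, I would take the $(r,c)$ entry of $X\beta=\beta\,\T^\sigma_\lambda$ and split off the two extreme indices $k=c$ and $k=r$. These carry $X_{rc}\cdot 1$ and $t_r\beta_{rc}$ respectively. This leaves
$$X_{rc}+t_r\beta_{rc}+\sum_{c<k<r}X_{rk}\beta_{kc}=\beta_{rc}t_c,$$
which is then solved for $X_{rc}$. For the second line, I would take the $(r,c)$ entry of $\beta^{-1}X=\T^\sigma_\lambda\beta^{-1}$ and split off the same two extreme terms, now $X_{rc}$ and $(\beta^{-1})_{rc}t_c$. The right-hand side is $t_r(\beta^{-1})_{rc}$. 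Solving for $X_{rc}$ again gives the claimed shape: a leading term proportional to $t_c-t_r$ (respectively $t_r-t_c$), plus a sum over intermediate indices $c<k<r$. Each such summand involves only entries of $X$ with strictly smaller gap $|r'-c'|$, which is what makes the identity usable as an induction on $r-c$ later.

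The step I expect to be the real obstacle is matching the signs exactly as printed. Carrying out the computation above literally yields
$$X_{rc}=\beta_{rc}(t_c-t_r)-\sum_{c<k<r}X_{rk}\beta_{kc},\qquad X_{rc}=(\beta^{-1})_{rc}(t_r-t_c)-\sum_{c<k<r}(\beta^{-1})_{rk}X_{kc}.$$
A $3\times 3$ check confirms this: with $\beta_{21}=a$, $\beta_{32}=b$, $\beta_{31}=0$ one gets $X_{31}=ab(t_3-t_2)$. Plugging the same data into the printed first line instead gives $ab(t_2-t_3)$. So before writing the proof I would re-examine the conventions behind the statement. The candidates are whether $\T^\sigma_\lambda$ acts by $C_\sigma$ on the other side, whether $\beta$ or $\beta^{-1}$ is intended in the conjugation, and whether the intermediate sum is meant with entries of $-X$.

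I would adjust the argument to whichever convention makes the printed identities literally true. If none does, I would state and prove the corrected identities above. The proof of Theorem \ref{any_type_theorem} uses this lemma only to bound the valuation of $X_{rc}$ inductively through the stabilizer condition \eqref{stab_lambda}, and in such an argument the signs of the summands play no role.
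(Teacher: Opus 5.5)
Your computation is correct, and it follows the paper's route: the paper also expands $(X\beta)_{rc}=\sum_{c\le k\le r}X_{rk}\beta_{kc}$ and uses $\beta^{-1}\beta=1$. Your sign check is also right, so the lemma is false as printed.

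In fact no reinterpretation of $X$ can rescue it. For $r=c+1$ both sums are empty and $(\beta^{-1})_{c+1,c}=-\beta_{c+1,c}$. So the first printed line gives $X_{c+1,c}=\beta_{c+1,c}(t_c-t_{c+1})$ and the second gives its negative; the second line is already wrong for $n=2$. You should therefore drop the hedging about conventions and state and prove your two corrected identities.

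The paper's displayed proof reaches the wrong signs through two slips:
\begin{itemize}
\item Its first equality treats the $k=c$ and $k=r$ terms of $\sum_{c\le k\le r}X_{rk}\beta_{kc}$ as $\beta_{rc}(t_c-t_r)$. They are actually $X_{rc}+t_r\beta_{rc}$.
\item It evaluates $\sum_{k=c}^{l}(\beta^{-1})_{lk}\beta_{kc}$ as $(\beta^{-1})_{lc}$ instead of $\delta_{lc}$. The correct value yields $(X\beta)_{rc}=t_c\beta_{rc}$, which is exactly your starting identity.
\end{itemize}

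As you observe, Lemmas \ref{val_beta_influenced} and \ref{val_beta-1_influenced} only use valuation estimates of the individual terms. The corrected signs therefore leave those arguments intact.
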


\begin{proof} Proof of the first equality:
    \begin{align*}
         \beta_{rc}(\lambda^{w_{\sigma(c)}} - \lambda^{w_{\sigma(r)}}) +   \sum_{ c < k < r }  (\beta \T^\sigma_\lambda \beta^{-1})_{rk} \beta_{kc} &=  \sum_{ c \leq k \leq r }  (\beta \T^\sigma_\lambda \beta^{-1})_{rk} \beta_{kc}  \\ 
         &= \sum_{ c \leq k \leq r }  \sum_{l=k}^r \lambda^{w_{\sigma(l)}} \beta_{rl} (\beta^{-1})_{lk} \beta_{kc} \\
         &= \sum_{l=c}^r \lambda^{w_{\sigma(l)}} \beta_{rl} \sum_{k=c}^l (\beta^{-1})_{lk} \beta_{kc} \\
         &= \sum_{l=c}^r \lambda^{w_{\sigma(l)}} \beta_{rl} (\beta^{-1})_{lc} = (\beta \T^\sigma_{\lambda} \beta^{-1})_{rc}\ .
    \end{align*}
    The proof of the second equality is analogous.
\end{proof}

\begin{lemma}\label{val_beta_influenced}
    Choose $j \leq \iota^*$ such that $w_{\sigma(j)} < \min_{r > \iota^*} \{w_{\sigma(r)}\}$. Then for all $l > \iota^*$, we have 
    $$ \nu(\beta_{lj}) \geq \sum_{\iota^* \leq \iota < l} r_{\iota} - (v_q(\#\text{orbit}(\beta))+1)(l-\iota^*). $$
\end{lemma}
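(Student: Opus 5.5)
The plan is induction on $l = \iota^*+1, \dots, n$. The engine is the first identity of Lemma \ref{beta_induction}, applied with $r = l$ and $c = j$ and with $\lambda$ chosen in $\stab_{\O^\times}(\Lambda_\beta)$. Set $v \coloneq v_q(\#\text{orbit}(\beta)) + 1$, $a \coloneq w_{\sigma(j)}$ and $b \coloneq w_{\sigma(l)}$. By hypothesis $a < b$ for every $l > \iota^*$, so $d \coloneq b-a$ is a positive integer bounded in terms of the grading.

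\textbf{Step 1: choose a good $\lambda$ in the stabilizer.} I claim that for each such $l$ there is a $\lambda \in \stab_{\O^\times}(\Lambda_\beta)$ with $\nu(\lambda^a - \lambda^b) = \nu(1-\lambda^{d}) \leq v$. Suppose not. Then every stabilizing $\lambda$ satisfies $\lambda^d \in 1+\P^{v+1}$. The subgroup $\{\lambda \in \O^\times \mid \lambda^d \in 1 + \P^{v+1}\}$ has index in $\O^\times$ whose $q$-part is roughly $q^{v}$, with a possible loss from $d$-torsion and ramification. This would contradict $v_q([\O^\times : \stab_{\O^\times}(\Lambda_\beta)]) = v-1$ via the Orbit--Stabilizer Theorem.

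This step is the main obstacle. The $d$-th power map on $1+\P$ is not injective or surjective on the nose, since $p$ may divide $d$ and $\O$ may be ramified. So the honest statement will probably lose a constant depending on $d$ and on $\O$, or will require passing to $1+\P^{k}$ for $k$ large, where the logarithm linearizes things. I would first try to show directly that the image of $\stab_{\O^\times}(\Lambda_\beta) \cap (1+\P)$ under $\lambda \mapsto \lambda^d$ is not contained in $1+\P^{v+1}$, by comparing indices of filtration subgroups. If constants appear, I would absorb them by weakening the lemma's bound, since only linear growth matters for Theorem \ref{any_type_theorem}.

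\textbf{Step 2: bound the terms of the identity.} Lemma \ref{beta_induction} gives
$$ \beta_{lj}(\lambda^a - \lambda^b) = (\beta \T^\sigma_\lambda \beta^{-1})_{lj} - \sum_{j<k<l} (\beta \T^\sigma_\lambda \beta^{-1})_{lk}\, \beta_{kj}. $$
Because $\lambda$ stabilizes $\Lambda_\beta$, Equation \ref{stab_lambda} gives $\nu((\beta\T^\sigma_\lambda\beta^{-1})_{rc}) \geq \sum_{c \leq \iota < r} r_\iota$ for all $r > c$. Hence the first term on the right has valuation at least $\sum_{j \leq \iota < l} r_\iota \geq \sum_{\iota^* \leq \iota < l} r_\iota$, using $j \leq \iota^*$.

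For the summands with $k \leq \iota^*$, the factor $\beta_{kj}$ is integral and the stabilizer factor has valuation at least $\sum_{k \leq \iota < l} r_\iota \geq \sum_{\iota^* \leq \iota < l} r_\iota$.

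For the summands with $\iota^* < k < l$, I use the induction hypothesis $\nu(\beta_{kj}) \geq \sum_{\iota^* \leq \iota < k} r_\iota - v(k-\iota^*)$. These bounds do not depend on $\lambda$, so choosing a different $\lambda$ for each $l$ is harmless. Adding the stabilizer bound $\sum_{k \leq \iota < l} r_\iota$ gives valuation at least
$$ \sum_{\iota^* \leq \iota < l} r_\iota - v(l-1-\iota^*). $$
In the base case $l = \iota^*+1$ there are no such summands.

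\textbf{Step 3: conclude.} Every term on the right therefore has valuation at least $\sum_{\iota^* \leq \iota < l} r_\iota - v(l-1-\iota^*)$. Subtracting $\nu(\lambda^a - \lambda^b) \leq v$ from Step 1 yields
$$ \nu(\beta_{lj}) \geq \sum_{\iota^* \leq \iota < l} r_\iota - v(l-\iota^*), $$
which is the claimed inequality with $v = v_q(\#\text{orbit}(\beta)) + 1$. This closes the induction.
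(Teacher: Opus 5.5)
Your Steps 2 and 3 are essentially the paper's induction: the first identity of Lemma \ref{beta_induction} at $(r,c)=(l,j)$, the split of $\sum_{j<k<l}$ at $\iota^*$, the stabilizer bound for $k\leq\iota^*$, and the induction hypothesis for $\iota^*<k<l$. Everything therefore rests on your Step 1, which you leave open, and that is a genuine gap.

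The paper closes this step in one line. It asserts that, because $w_{\sigma(j)}<w_{\sigma(l)}$, the congruence $\beta_{lj}(\lambda^{w_{\sigma(j)}}-\lambda^{w_{\sigma(l)}})\equiv 0 \bmod \P^{X}$ (with $X$ the bound on the remaining terms) holds if and only if $\nu(\lambda-1)\geq X-\nu(\beta_{lj})$. It concludes that $\stab_{\O^\times}(\beta)\leq 1+\P^{X-\nu(\beta_{lj})}$. Your suspicion is justified: this equivalence is false in general (take $\lambda=-1$, $d$ even, $p$ odd). Failures caused by prime-to-$p$ roots of unity are invisible to $v_q$, but failures caused by $p\mid d$ are not.

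If $p\nmid d$, your Step 1 holds exactly, with no ramification issue and no logarithm. Put $S=\stab_{\O^\times}(\Lambda_\beta)$ and $S_1=S\cap(1+\P)$.
\begin{itemize}
\item Since $[\O^\times:S(1+\P)]$ divides $q-1$, the $q$-part of $\#\text{orbit}(\beta)=[\O^\times:S]$ equals that of $[S(1+\P):S]=[1+\P:S_1]$.
\item For $\lambda\in 1+\P$ we have $1-\lambda^d=(1-\lambda)(1+\lambda+\dots+\lambda^{d-1})$. The second factor is $\equiv d\not\equiv 0 \bmod \P$, so $\nu(1-\lambda^d)=\nu(1-\lambda)$.
\item If every $\lambda\in S_1$ had $\nu(1-\lambda^d)\geq v+1$, then $S_1\leq 1+\P^{v+1}$. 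Hence $q^v$ divides $[1+\P:S_1]$ and $v_q(\#\text{orbit}(\beta))\geq v$, a contradiction.
\end{itemize}

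If $p\mid d$, however, Step 1 and the lemma itself fail. Take $\O=\Z_p$ with $p$ odd, and $A=\Z_p^2$ with zero multiplication, graded by $A_1=\Z_p e_1$ and $A_{1+p}=\Z_p e_2$; this is allowed by Definition \ref{grading_definition}. Let $I=\{1\}$, $r_1=R\geq 2$, $\sigma=\mathrm{id}$ and $\beta_{21}=1$. By Equation \ref{stab_lambda},
$\stab_{\O^\times}(\beta)=\{\lambda:\lambda^p\equiv 1 \bmod p^R\}=1+p^{R-1}\Z_p$,
so $v_q(\#\text{orbit}(\beta))=R-2$. The lemma would then force $\nu(\beta_{21})\geq 1$, which is false.

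So some loss is unavoidable, and your fallback is the right repair. Since $[1+\P:S_1^d]\leq[1+\P:(1+\P)^d]\cdot[1+\P:S_1]$, there is some $\lambda\in S_1$ with $\nu(1-\lambda^d)\leq v+C$, where $C=\lceil\log_q[1+\P:(1+\P)^d]\rceil$. Running Steps 2 and 3 with $v+C$ in place of $v$ gives the lemma with that shift. Since $d$ is at most the spread of the weights, $C$ depends only on the grading and on $\O$. The downstream bounds (Theorem \ref{any_type_theorem}, Corollary \ref{absolute_convergence_iw}) therefore change only by an additive constant, which does not affect the linear growth needed for Theorem \ref{grading_residue_theorem}.
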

\noindent From this lemma we deduce 
    $$ \nu(\beta_{lj}) \geq \sum_{\iota^* \leq \iota < l} r_{\iota} - (v_q(\#\text{orbit}(\beta))+1)(n-\iota^*)  \geq \sum_{\iota^* \leq \iota < l} r_{\iota} - (v_q(\#\text{orbit}(\beta))+1)(n-1). $$
Thus, inspired by this, define the invariant
$ \varepsilon(\beta) \coloneq (v_q(\#\text{orbit}(\beta))-1)(n-1). $
 
\begin{proof}[Proof of Lemma \ref{val_beta_influenced}]
    Let $j$ be as in the statement of the Lemma. We proceed by induction on $l$. For the base step, let $l = \iota^*+1$. Then for any $\lambda \in \O^\times$, by Lemma \ref{beta_induction} (first equation):
    \begin{equation}\label{beta_induction_eq}
        (\beta \T_\lambda\beta^{-1})_{lj} =  \beta_{lj}(\lambda^{w_{\sigma(j)}} - \lambda^{w_{\sigma(l)}}) +  \sum_{k=j+1}^{l-1} \beta_{kj}  (\beta \T_\lambda \beta^{-1})_{lk}\, .
    \end{equation}
    If $\lambda \in \stab_{\O^\times}(\beta)$, then  $(\beta \T_\lambda \beta^{-1})_{rc} \equiv 0 \bmod \P^{\sum_{c \leq  \iota <r} r_\iota}$ for all $c<r$. In particular, in Equation \ref{beta_induction_eq} we see that every $ (\beta \T_\lambda \beta^{-1})_{lk} \equiv 0 \bmod \P^R$ for $j<k<l$, since $\sum_{l-1 \leq \iota < l} r_\iota = R$. Thus, on the one hand we know that
    
    $$ (\beta \T_\lambda\beta^{-1})_{lj} \equiv 0 \bmod \P^{\sum_{j \leq \iota < l} r_\iota}, $$
    and on the other hand we have 
    $$  (\beta \T_\lambda\beta^{-1})_{lj} \equiv \beta_{lj}(\lambda^{w_{\sigma(j)}} - \lambda^{w_{\sigma(l)}}) \bmod \P^{R}. $$
    Therefore 
    $$  \beta_{lj}(\lambda^{w_{\sigma(j)}} - \lambda^{w_{\sigma(l)}}) \equiv 0 \bmod \P^{R}, $$
    since $R \leq \sum_{j\leq \iota < l} r_\iota$. Because $w_{\sigma(j)} < w_{\sigma(l)}$, this congruence is satisfied if and only if $\nu(\lambda-1) \geq R - \nu(\beta_{lj})$. That is, $\stab_{\O^\times}(\beta) \leq \O^\times+\pi^{R-\nu(\beta_{lj})}\O$. This implies that $v_q(\#\text{orbit}(\beta)) \geq R - \nu(\beta_{lj}) -1$, and rearranging, we get $\nu(\beta_{lj}) \geq R - v_q(\#\text{orbit}(\beta))-1$, which proves the base step.
    
    For the induction step, assume that $l \geq \iota^*+2$ and that the lemma holds for all $l' < l$: that is, for $\iota^*<l'<l$, we have
    $$ \nu(\beta_{l'j}) \geq \sum_{\iota^* \leq \iota < l'} r_\iota - \varepsilon(\beta). $$
    Observe again that
    \begin{align*}
        (\beta \T_\lambda\beta^{-1})_{lj} &= \beta_{lj}(\lambda^{w_{\sigma(j)}} - \lambda^{w_{\sigma(l)}}) + \sum_{j<k<l} \beta_{kj} (\beta \T_\lambda \beta^{-1})_{lk} \\
        &= \beta_{lj}(\lambda^{w_{\sigma(j)}} - \lambda^{w_{\sigma(l)}}) + \sum_{j<k\leq \iota^{*}} \beta_{kj} (\beta \T_\lambda \beta^{-1})_{lk}  + \sum_{\iota^{*}<k<l} \beta_{kj} (\beta \T_\lambda \beta^{-1})_{lk} \, .
    \end{align*}
    For $j<k\leq \iota^{*}$ we know that $\nu((\beta \T_\lambda \beta^{-1})_{lk}) \geq \sum_{k\leq \iota < l} r_\iota \geq \sum_{\iota^* \leq \iota < l} r_\iota$. For $\iota^{*}<k<l$ we use the induction hypothesis to say that 
    \begin{align*}
        \nu(\beta_{kj} (\beta \T_\lambda \beta^{-1})_{lk}) &\geq \sum_{\iota^* \leq \iota < k} r_\iota - (v_q(\# \text{orbit}(\beta) )+1)( k-\iota^* ) + \sum_{k\leq \iota < l} r_\iota \\ 
        &= \sum_{\iota^* \leq \iota < l} r_\iota - (v_q(\# \text{orbit}(\beta) )+1)( k-\iota^* ) \\
        &\geq \sum_{\iota^* \leq \iota < l} r_\iota - (v_q(\# \text{orbit}(\beta) )+1)( l-1-\iota^* ) \, .
    \end{align*}
    
    \noindent Therefore
    \begin{equation*}
        (\beta \T_\lambda\beta^{-1})_{lj} \equiv \beta_{lj}(\lambda^{w_{\sigma(j)}} - \lambda^{w_{\sigma(l)}}) \bmod \P^{\sum_{\iota^* \leq \iota < l} r_\iota - (v_q(\# \text{orbit}(\beta) )+1)( l-1-\iota^* )}  .
    \end{equation*}
    Thus, as in the base case, we have $$\stab_{\O^\times}(\beta) \leq \O^\times + \pi^{\sum_{\iota^* \leq \iota < l} r_\iota - (v_q(\# \text{orbit}(\beta) )+1)( l-1-\iota^* ) - \nu(\beta_{lj})} \O$$ which means that 
    \begin{align*}
        v_q(\# \text{orbit}(\beta)) &\geq  \sum_{\iota^* \leq \iota < l} r_\iota - (v_q(\# \text{orbit}(\beta) )+1)( l-1-\iota^* ) - \nu(\beta_{lj}) -1
    \end{align*}
    and thus
    \begin{align*}
        \nu(\beta_{lj}) &\geq  \sum_{\iota^* \leq \iota < l} r_\iota - (v_q(\# \text{orbit}(\beta) )+1)( l-1-\iota^* ) - v_q(\# \text{orbit}(\beta)) -1 \\
        &=  \sum_{\iota^* \leq \iota < l} r_\iota - (v_q(\# \text{orbit}(\beta) )+1)( l-\iota^* ),
    \end{align*}
    which is what had to be proven.
\end{proof}

\begin{lemma}\label{val_beta-1_influenced}
    Choose $q > \iota^*$ such that $w_{\sigma(q)} > \max_{c \leq \iota^*} \{w_{\sigma(c)}\}$. Then for all $m \leq \iota^*$ we have
    $$ \nu((\beta^{-1})_{qm}) \geq \sum_{m \leq \iota \leq \iota^*} r_{\iota} - ( v_q(\#\text{orbit}(\beta))+1 )(\iota^* - m) \geq \sum_{m \leq \iota \leq \iota^*} r_{\iota} - \varepsilon(\beta).  $$
\end{lemma}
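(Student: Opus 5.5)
This is the mirror image of Lemma \ref{val_beta_influenced}. There we used the first identity of Lemma \ref{beta_induction} and inducted upward in the row index $l$. Here we use the second identity, which expresses $(\beta\T^\sigma_\lambda\beta^{-1})_{qm}$ through entries of $\beta^{-1}$, and induct downward in the column index $m$, starting at $m=\iota^*$. Throughout, fix $\lambda\in\stab_{\O^\times}(\beta)$. By Equation \ref{stab_lambda}, $\nu((\beta\T^\sigma_\lambda\beta^{-1})_{rc})\geq\sum_{c\leq\iota<r}r_\iota$ for all $c<r$. Write $v:=v_q(\#\text{orbit}(\beta))$.

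\emph{Base step $m=\iota^*$.} The second identity of Lemma \ref{beta_induction} with $(r,c)=(q,\iota^*)$ reads
$$(\beta\T^\sigma_\lambda\beta^{-1})_{q\iota^*}=(\beta^{-1})_{q\iota^*}\bigl(\lambda^{w_{\sigma(\iota^*)}}-\lambda^{w_{\sigma(q)}}\bigr)+\sum_{\iota^*<k<q}(\beta^{-1})_{qk}(\beta\T^\sigma_\lambda\beta^{-1})_{k\iota^*}.$$
For each $k$ in the sum, $k>\iota^*$, so $\nu((\beta\T^\sigma_\lambda\beta^{-1})_{k\iota^*})\geq r_{\iota^*}=R$. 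The left-hand side also has valuation at least $\sum_{\iota^*\leq\iota<q}r_\iota\geq R$. Hence
$$(\beta^{-1})_{q\iota^*}\bigl(\lambda^{w_{\sigma(\iota^*)}}-\lambda^{w_{\sigma(q)}}\bigr)\equiv0\bmod\P^R.$$
Since $w_{\sigma(q)}>w_{\sigma(\iota^*)}$, we argue exactly as in the base step of Lemma \ref{val_beta_influenced}: the factor $\lambda^{w_{\sigma(\iota^*)}}-\lambda^{w_{\sigma(q)}}$ has valuation $\nu(\lambda-1)$, up to the same harmless unit issue that the paper already accepts there. This gives $\stab_{\O^\times}(\beta)\leq\O^\times+\pi^{R-\nu((\beta^{-1})_{q\iota^*})}\O$. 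The orbit–stabilizer estimate preceding Lemma \ref{beta_induction} then yields $\nu((\beta^{-1})_{q\iota^*})\geq R-v-1$. I will interpret the sum $\sum_{m\leq\iota\leq\iota^*}r_\iota$ in the statement as the sum over $m\leq\iota\leq\iota^*$. With the counting matching Lemma \ref{val_beta_influenced}, the base case should give exactly $r_{\iota^*}$ minus the error term; part of the work is settling the indexing so that it matches.

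\emph{Induction step.} Take $m<\iota^*$ and assume the bound for all $m'$ with $m<m'\leq\iota^*$. Split the sum in the second identity into $m<k\leq\iota^*$ and $\iota^*<k<q$.

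For $k>\iota^*$, the stabilizer condition gives $\nu((\beta\T^\sigma_\lambda\beta^{-1})_{km})\geq\sum_{m\leq\iota<k}r_\iota\geq\sum_{m\leq\iota\leq\iota^*}r_\iota$, so these terms are harmless.

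For $m<k\leq\iota^*$, combine the induction hypothesis on $(\beta^{-1})_{qk}$ with $\nu((\beta\T^\sigma_\lambda\beta^{-1})_{km})\geq\sum_{m\leq\iota<k}r_\iota$. The two $r$-sums telescope to $\sum_{m\leq\iota\leq\iota^*}r_\iota$, and the error term is $(v+1)(\iota^*-k)\leq(v+1)(\iota^*-m-1)$. Therefore, modulo $\P^{\sum_{m\leq\iota\leq\iota^*}r_\iota-(v+1)(\iota^*-m-1)}$, we get $(\beta^{-1})_{qm}(\lambda^{w_{\sigma(m)}}-\lambda^{w_{\sigma(q)}})\equiv0$. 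Again $w_{\sigma(q)}>w_{\sigma(m)}$, so the stabilizer-index argument adds one more $-(v+1)$, which gives the claimed bound with factor $(\iota^*-m)$.

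The second inequality follows from $\iota^*-m\leq n-1$ together with the definition of $\varepsilon(\beta)$, handled as in the remark after Lemma \ref{val_beta_influenced}.

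The main obstacle is purely bookkeeping. First, the index conventions have to line up so that the telescoping of the $r$-sums through $\iota^*$ is exact. Second, the factor $\lambda^{a}-\lambda^{b}$ with $a<b$ must be controllable by $\nu(\lambda-1)$. For the latter I would simply invoke the same reasoning as in Lemma \ref{val_beta_influenced}, restricting to $\lambda\in1+\P$ if necessary, which only costs a bounded amount in the orbit valuation.
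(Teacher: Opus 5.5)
You follow the route the paper intends; its proof is only ``analogous to Lemma~\ref{val_beta_influenced}''. Concretely, that means the second identity of Lemma~\ref{beta_induction}, downward induction on $m$, and the stabilizer-to-orbit estimate, and your computations in both steps are right. The gap is the point you deferred as bookkeeping. Your base step gives $\nu((\beta^{-1})_{q\iota^*})\geq R-(v+1)$. The first inequality of the statement at $m=\iota^*$, where $\iota^*-m=0$, demands $\nu((\beta^{-1})_{q\iota^*})\geq R$. Your induction step then uses exactly that stronger form as its hypothesis, so the induction is never anchored.

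No reindexing can repair this, because the printed inequality is false. Take a graded $A$ with $\dim A_1=\dim A_2=1$ (so $w_1=1$, $w_2=2$), $\sigma=\mathrm{id}$, $I=\{1\}$, $r_1=R\geq 1$, row index $q=2$ in the statement's notation, and $\beta_{21}=1$. The dilation acts by $\beta_{21}\mapsto\lambda^{-1}\beta_{21}$ modulo $\P^R$. So the stabilizer is $1+\P^R$ and $v=R-1$, yet $\nu((\beta^{-1})_{21})=0<R$.

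What your argument proves, once you carry forward the base-case bound you actually obtained, is the bound with factor $(\iota^*-m+1)$ in place of $(\iota^*-m)$. This is the true mirror of $(l-\iota^*)$ in Lemma~\ref{val_beta_influenced}: in both lemmas the factor counts the indices in the range of the $r$-sum, and $[m,\iota^*]$ contains $\iota^*-m+1$ of them. The second inequality is the only one used later, in Theorem~\ref{any_type_weight_additive_theorem}. It still follows from $\iota^*-m+1\leq\iota^*\leq n-1$, provided $\varepsilon(\beta)=(v+1)(n-1)$, as in the proof of Theorem~\ref{any_type_theorem}. With the displayed definition $(v-1)(n-1)$ it fails in the same example.

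On the unit issue, restricting to $\lambda\in 1+\P$ costs nothing in $v_q$, since $[\O^\times:1+\P]=q-1$ is prime to $p$. The real loss occurs when $p$ divides the weight difference $w_{\sigma(q)}-w_{\sigma(m)}$. In that case $\nu(\lambda^{w_{\sigma(q)}-w_{\sigma(m)}}-1)$ can exceed $\nu(\lambda-1)$: by the valuation of that integer for $\lambda$ near $1$, and by more if $\O$ contains nontrivial $p$-power roots of unity. Then ``congruence implies $\lambda\in 1+\pi^{N-\nu((\beta^{-1})_{qm})}\O$'' holds only up to a bounded additive error per induction step. This is invisible when $p$ exceeds all weight differences. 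Otherwise it shifts the exact constants here, in Lemma~\ref{val_beta_influenced}, and in Theorem~\ref{any_type_theorem} by a bounded amount, which is harmless for Theorem~\ref{grading_residue_theorem}. The paper passes over this the same way, so state it as a caveat rather than calling it harmless for the lemma as written.
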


\begin{proof}
    Analogous to \ref{val_beta_influenced}.
\end{proof}

\subsubsection{Additive action: transvections}\label{add_subsection}

We choose $r^* > \iota^*$ and $c^* \leq \iota^*$ such that the corresponding weight is minimized (resp. maximised):
$$ w_{r^*} = \min_{r > \iota^*} \left\{ w_{\sigma(r)} \right\}, \quad w_{c^*} = \max_{c \leq \iota^*} \left\{ w_{\sigma(c)} \right\}.  $$
Let $M_{r^*c^*}(\mu)$ for $\mu \in \O$ denote the following elementary matrix:
$$ (M_{r^*c^*}(\mu))_{ij} \coloneq \begin{cases} \delta_{ij} \quad &\text{ for } \ (i,j) \neq (r^*,c^*)  \\ \mu \quad &\text{ for } \ (i,j) = (r^*,c^*)\ . \end{cases}$$
We define an additive action by $\O$ via right multiplication by $M_{r^*c^*}(\mu)$:
$$ \Gamma M = \Gamma D_{(I,\overline{r})} \beta C_{\sigma} \longmapsto \Gamma D_{(I,\overline{r})} (\beta \cdot M_{r^*c^*}(\mu)) C_{\sigma}. $$
Thus, one the level of lattices, this corresponds to
\begin{equation}\label{add_action}
    \Lambda_{\beta} \longmapsto \Lambda_{\beta M_{r^* c^*}(\mu)}. 
\end{equation}

\noindent Again, to study the orbit size of $\beta$ under this action, we look at the stabilizer:
$$ \stab_{\O} (\beta) = \left\{ \mu \in \O  \, \middle| \, (M_{r^*c^*}(\mu))^{\beta^{-1}} \in \Gamma_{(I,\overline{r})} \right\}. $$
Because
$$ (\beta \Delta_{r^*c^*}(\mu) \beta^{-1})_{ij} = \mu \cdot \beta_{ir^*} (\beta^{-1})_{c^*j}, $$
this means that 
\begin{equation*}\label{stab_mu}
    \stab_{\O}(\beta) = \left\{ \mu \in \O \, \middle| \,  \mu \cdot \beta_{ir^*} (\beta^{-1})_{c^*j} \equiv 0 \bmod \P^{\sum_{j \leq \iota <i} r_\iota} \ \text{ for all } \ (i,j)\in [n]^2 \right\}.
\end{equation*}
Thus $\stab_{\O}(\beta) = \pi^\delta \O$ for some $\delta \in \N$. Define $\delta(\beta) \coloneq v_q( \#(\O \cdot \Lambda_\beta))$.
\begin{lemma}\label{val_beta_added}
    $$ \nu(\beta_{ir^*}) \geq \sum_{ c^* \leq \iota < i } r_\iota - \delta(\beta) \ \text{ for all } i \in [n]$$
\end{lemma}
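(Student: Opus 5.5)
From the description of $\stab_{\O}(\beta)$ just obtained, it equals $\pi^{\delta}\O$ with $\delta$ the smallest nonnegative integer such that
$$ \delta + \nu(\beta_{ir^*}) + \nu((\beta^{-1})_{c^*j}) \geq \sum_{j \leq \iota < i} r_\iota \quad \text{for all } (i,j) \in [n]^2 . $$
By the orbit--stabilizer theorem the orbit $\O \cdot \Lambda_\beta$ is in bijection with $\O/\stab_{\O}(\beta) = \O/\pi^\delta\O$, which has $q^\delta$ elements. So this $\delta$ coincides with $\delta(\beta) = v_q(\#(\O\cdot\Lambda_\beta))$. The plan is to extract the claimed inequality from a single well-chosen entry $(i,j)$ of these stabilizer conditions.

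Fix $i \in [n]$ and take $j = c^*$. Since $\beta$ is lower unitriangular, so is $\beta^{-1}$, and hence $(\beta^{-1})_{c^*c^*} = 1$ has valuation $0$. The corresponding condition reads
$$ \delta(\beta)\cdot 1 \text{-free form: } \quad \mu\, \beta_{ir^*} \equiv 0 \bmod \P^{\sum_{c^* \leq \iota < i} r_\iota} \quad \text{for all } \mu \in \pi^{\delta(\beta)}\O . $$
Taking $\mu = \pi^{\delta(\beta)}$ gives $\delta(\beta) + \nu(\beta_{ir^*}) \geq \sum_{c^* \leq \iota < i} r_\iota$, which is exactly the statement.

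Some bookkeeping is needed. First, when $i \leq c^*$ the right-hand side is an empty sum, so the claim is trivial; note also that $\beta_{ir^*}=0$ for $i<r^*$, where $\nu = \infty$. Second, the exponent in the stabilizer description must be read with the convention from the definition of $\Gamma_{(I,\overline{r})}$: the entry $(i,j)$ with $j \geq i$ imposes no condition, and sums are over $\iota \in I$. I also have to make sure the indices $r^*, c^*$ are used consistently with the permutation $\sigma$ (the paper writes $w_{r^*}$ for $w_{\sigma(r^*)}$), but this does not affect the argument.

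The one point to check carefully is the conjugation formula
$$ \beta\, M_{r^*c^*}(\mu)\, \beta^{-1} = 1 + \mu\, \beta_{\cdot r^*} (\beta^{-1})_{c^* \cdot} , $$
that is, that $\stab_{\O}(\beta)$ is governed exactly by the rank-one matrix $\mu\, \beta_{\cdot r^*} (\beta^{-1})_{c^* \cdot}$ lying in $D^{-1}\Gamma D$. Everything after that is immediate, so I expect no real obstacle.
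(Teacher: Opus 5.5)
Your proof is correct and follows the paper's argument: you specialise the stabilizer condition to $j=c^*$, where $(\beta^{-1})_{c^*c^*}=1$, and use that $\pi^{\delta(\beta)}\in\stab_{\O}(\beta)=\pi^{\delta(\beta)}\O$. The paper phrases the same step contrapositively with an auxiliary $\delta'$. Your explicit checks — the orbit–stabilizer identification of $\delta$ with $\delta(\beta)$ and the rank-one conjugation formula — are steps the paper leaves implicit. The garbled phrase ``$\delta(\beta)\cdot 1$-free form'' is just a typo and does not affect the argument.
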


\begin{proof}
    For $j = c^{*}$, the stabilizer condition in \ref{stab_mu} reduces to $ \mu \cdot \beta_{ir^*} \equiv 0 \bmod \P^{\sum_{ c^* \leq \iota < i } r_\iota } $. Thus, if $\nu(\beta_{ir^*}) < \sum_{ c^* \leq \iota < i } r_\iota - \delta'$ for some $\delta' \in \N$, then $\mu = \pi^{\delta'}$ is not a solution of this congruence, and thus not an element of $\stab_{\O} (\beta)$.
\end{proof}

\begin{lemma}\label{val_beta-1_added}
    $$ \nu((\beta^{-1})_{c^*j}) \geq \sum_{j \leq \iota < r^*} r_\iota - \delta(\beta) \ \text{ for all } \ j \in [n] $$
\end{lemma}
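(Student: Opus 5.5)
The argument mirrors the proof of Lemma \ref{val_beta_added}, now using the column $c^*$ of $\beta^{-1}$ instead of the column $r^*$ of $\beta$. Recall that $\stab_{\O}(\beta) = \pi^{\delta}\O$, where $\delta = \delta(\beta)$ is the $q$-adic valuation of the orbit size. A scalar $\mu$ lies in the stabilizer exactly when
$$\mu \cdot \beta_{ir^*}(\beta^{-1})_{c^*j} \equiv 0 \bmod \P^{\sum_{j\le\iota<i} r_\iota} \quad \text{for all } (i,j)\in[n]^2 .$$
Instead of fixing $j = c^*$, I will specialise to the row index $i = r^*$. Since $\beta$ is lower unitriangular, $\beta_{r^*r^*} = 1$, so the condition for the pair $(r^*, j)$ becomes
$$\mu \cdot (\beta^{-1})_{c^*j} \equiv 0 \bmod \P^{\sum_{j\le \iota< r^*} r_\iota}.$$

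The key steps are as follows.

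First, note that $\mu = \pi^{\delta}$ lies in $\stab_{\O}(\beta)$, because the stabilizer is $\pi^{\delta}\O$.

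Second, apply the displayed congruence with $\mu = \pi^\delta$. This gives $\delta + \nu((\beta^{-1})_{c^*j}) \ge \sum_{j\le\iota<r^*} r_\iota$ whenever that sum is positive, and rearranging yields the claim.

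Third, dispose of the degenerate indices. When $j \ge r^*$ the sum $\sum_{j\le\iota<r^*} r_\iota$ is empty, hence $0$, and the inequality holds trivially because valuations are nonnegative and $\delta \ge 0$. This covers in particular $j = c^*$, where $(\beta^{-1})_{c^*c^*} = 1$. Note also that $(\beta^{-1})_{c^*j} = 0$ for $j > c^*$, since $\beta^{-1}$ is again lower unitriangular.

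Finally, I will phrase the whole argument contrapositively, exactly as in Lemma \ref{val_beta_added}. Suppose $\nu((\beta^{-1})_{c^*j}) < \sum_{j\le\iota<r^*} r_\iota - \delta'$ for some $\delta'\in\N$. Then $\pi^{\delta'}$ fails the congruence, so $\pi^{\delta'} \notin \stab_{\O}(\beta)$, and therefore $\delta' < \delta$.

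The only point needing care is identifying $\delta$ with the exponent of the stabilizer. Because $\O$ acts additively, the orbit has size $[\O : \pi^{\delta}\O] = q^{\delta}$, so $v_q$ of the orbit size is exactly $\delta$. This is the same identification already made implicitly in Lemma \ref{val_beta_added}, so I do not expect a genuine obstacle.
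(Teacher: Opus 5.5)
Your proposal is correct and is essentially the argument the paper intends by ``analogous to the proof of Lemma~\ref{val_beta_added}'': you specialise the stabiliser condition to the row $i=r^*$, use $\beta_{r^*r^*}=1$, and test $\mu=\pi^{\delta(\beta)}\in\stab_{\O}(\beta)=\pi^{\delta(\beta)}\O$. Your treatment of the empty-sum cases $j\ge r^*$ and the identification $\delta(\beta)=\delta$ via $[\O:\pi^{\delta}\O]=q^{\delta}$ are also fine.
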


\begin{proof}
    Analogous to the proof of Lemma \ref{val_beta_added}.
\end{proof}

\noindent Recall the polynomial congruences \ref{klm_poly=0} that $\beta$ must satisfy for $\Lambda_\beta$ to be a subalgebra:

$$ f_{k,l}^{m}(\beta) \, \pi^{\sum_{\iota < m} r_\iota + \sum_{\iota \geq k} r_\iota + \sum_{\iota \geq l} r_\iota} \equiv 0 \bmod \P^{\sum_{\iota \in I} r_\iota}, $$
where
$$ f_{k,l}^{m}(\beta) \coloneq \hspace{-0.1cm} \sum_{i,j \in [n]} \beta_{ki} \beta_{lj} l_{ij}(\beta^{-1}[m]), \ \text{ and } \ l_{ij}(\beta^{-1}[m]) = \hspace{-0.9cm} \sum_{ \substack{q \in [n] \\ w_{\sigma(i)} + w_{\sigma(j)} = w_{\sigma(q)} }} \hspace{-0.7cm} \lambda^q_{i,j} (\beta^{-1})_{qm}. $$

\noindent Using this, we can prove the following.

\begin{theorem}\label{any_type_weight_additive_theorem}
    If $\beta$ represents a primitive lattice $\Lambda_\beta$ of weight $w$, then the primitive lattice $\Lambda_{\beta M_{r^*c^*}(\mu)}$ has weight $w$ whenever
    $$ \nu(\mu) \geq \max\left\{ \delta(\beta) - \frac{R+w}{2}, \delta(\beta) - (R+w-\varepsilon(\beta)) \right\}.$$
\end{theorem}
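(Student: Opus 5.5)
We will show that replacing $\beta$ by $\beta' \coloneq \beta M_{r^*c^*}(\mu)$ changes none of the polynomial congruences \ref{klm_poly=0} at any level of homothety $w' \geq 0$. More precisely, for each $w'$ we prove that $\pi^{w'}\Lambda_\beta$ is a subalgebra if and only if $\pi^{w'}\Lambda_{\beta'}$ is. Since the weight is the minimal $w'$ for which this holds, the two weights then coincide.

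Because $\mu$ is only relevant modulo $\P^{\delta(\beta)}$, it is enough to treat $\mu = \pi^{v}u$ with $v \geq \delta(\beta) - \min\{\frac{R+w}{2},\, R+w-\varepsilon(\beta)\}$. We also restrict to $w' \leq w$: at $w' = w$ we need that $\pi^w\Lambda_{\beta'}$ is a subalgebra, and at $w'=w-1$ we need that $\pi^{w-1}\Lambda_{\beta'}$ is not. For larger $w'$ the statement follows automatically from closure under homothety.

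The first step is to compute the effect of the transvection on $\beta$ and on $\beta^{-1}$. Right multiplication by $M_{r^*c^*}(\mu)$ adds $\mu\beta_{ir^*}$ to the entry $\beta_{ic^*}$ of column $c^*$. Its inverse $M_{r^*c^*}(-\mu)$ acts on the left of $\beta^{-1}$, subtracting $\mu(\beta^{-1})_{c^*j}$ from row $r^*$. By Lemmas \ref{val_beta_added} and \ref{val_beta-1_added}, these perturbations have valuation at least $\nu(\mu) + \sum_{c^*\leq\iota<i} r_\iota - \delta(\beta)$, respectively $\nu(\mu) + \sum_{j\leq\iota<r^*} r_\iota - \delta(\beta)$.

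Next we expand $f^m_{k,l}(\beta')$ as $f^m_{k,l}(\beta)$ plus terms that are linear or higher order in $\mu$. Each correction term has one of three forms, obtained by replacing one of $\beta_{ki}$, $\beta_{lj}$, $(\beta^{-1})_{qm}$ by its perturbation. We use weighted homogeneity: $l_{ij}$ only involves $q$ with $w_{\sigma(i)}+w_{\sigma(j)}=w_{\sigma(q)}$, and $r^*$, $c^*$ are the extremal weights across $\iota^*$. From this we show that in every surviving term the remaining unperturbed factors that straddle $\iota^*$ are themselves forced to have large valuation, by Lemmas \ref{val_beta_influenced} and \ref{val_beta-1_influenced}. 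This costs at most $\varepsilon(\beta)$. Collecting exponents, each correction term times $\pi^{w'+\sum_{\iota<m}r_\iota+\sum_{\iota\geq k}r_\iota+\sum_{\iota\geq l}r_\iota}$ should vanish modulo $\P^{\sum_\iota r_\iota}$. The two sources of slack are the block $r_{\iota^*}=R$, which is crossed once, and the homothety $w$. One case crosses the block twice through the two linear factors; it gains $2\nu(\mu)-2\delta(\beta)+\ldots$ and produces the bound $\frac{R+w}{2}$. The other case uses a single perturbation together with an influenced entry, and produces $R+w-\varepsilon(\beta)$.

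The main obstacle is this bookkeeping of exponents across the three factor positions and over all relative positions of $k,l,m$ with respect to $\iota^*$, $c^*$ and $r^*$. In particular, it must be checked that in each case the partial sums of $r_\iota$ telescope to include $R$. We would first handle $k,l\leq\iota^*<m$, which is the generic case, and then reduce the others to it by the weight-support argument. Quadratic terms in $\mu$ are dominated by the linear ones once $\nu(\mu)\geq 0$ is large enough. Symmetry under exchanging $\beta$ and $\beta'$ (replace $\mu$ by $-\mu$) then yields the equivalence, and with it the equality of weights.
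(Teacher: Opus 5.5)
\textbf{Where you match the paper.} Your level-$w$ computation is the paper's. You expand $f^m_{k,l}(\beta M_{r^*c^*}(\mu))$ in $\mu$. You bound each linear term by pairing the perturbed factor (Lemmas \ref{val_beta_added}, \ref{val_beta-1_added}) with a straddling unperturbed factor (Lemmas \ref{val_beta_influenced}, \ref{val_beta-1_influenced}, at cost $\varepsilon(\beta)$). You kill the higher-order terms with $\nu(\mu)\ge\delta(\beta)-\frac{R+w}{2}$.

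Your closing remark that quadratic terms are ``dominated by the linear ones'' is wrong, and it contradicts your own earlier attribution of the $\frac{R+w}{2}$ bound. The $\mu^2$-terms carry $2(\nu(\mu)-\delta(\beta))$, which can be negative. That is exactly why the separate bound is needed.

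What this computation gives is $\pi^{w}\Lambda_\beta\le A\iff\pi^{w}\Lambda_{\beta'}\le A$. In other words, the property ``weight at most $w$'' is preserved. This is also all that the paper's proof establishes.

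\textbf{The gap.} The problem is the extra step you add to get weight exactly $w$: the equivalence at level $w'=w-1$. Your bookkeeping spends the full slack $w$, so at level $w-1$ you are one power of $\pi$ short. The symmetry $\beta\leftrightarrow\beta'$ does not close this. Running the argument from $\beta'$ needs the hypothesis with the possibly smaller weight of $\Lambda_{\beta'}$, and with $\varepsilon(\beta')$. Unlike $\delta$, $\varepsilon$ is not constant on additive orbits.

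\textbf{A counterexample.} The step cannot be repaired, because the exact-weight conclusion is false. Take $A=\O e_1\oplus\O e_2$ with $e_1e_1=e_2$ and all other products $0$, graded by $A_1=\O e_1$, $A_2=\O e_2$. Let $n=2$, $I=\{1\}$, $R=r_1\ge 1$ and $\sigma=\mathrm{id}$. Then $c^*=1$, $r^*=2$ and $\Lambda_\beta=\langle \pi^R e_1,\ be_1+e_2\rangle$.
\begin{itemize}
\item One checks that $\pi^w\Lambda_\beta\le A$ if and only if $w+3\nu(b)\ge R$.
\item The additive action is $b\mapsto b+\mu$, so $\delta(\beta)=R$.
\item The multiplicative action is $b\mapsto \lambda^{-1}b$.
\end{itemize}
Now take $b\in\O^\times$. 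Then $\Lambda_\beta$ has weight $R$, and $v_q(\#\text{orbit}(\beta))=R-1$. So $\varepsilon(\beta)$ is $R-2$ or $R$, depending on which of the paper's two normalisations you use. In either case the hypothesis reduces to $\nu(\mu)\ge 0$. But $\mu=-b$ gives $\Lambda_{\beta'}=\langle\pi^Re_1,\ e_2\rangle$, which is a subalgebra, of weight $0$.

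\textbf{What to do instead.} Drop the level-$(w-1)$ claim and the symmetry step. State and prove the result in the form the computation actually supports: $\pi^w\Lambda_{\beta'}\le A$ if and only if $\pi^w\Lambda_\beta\le A$.
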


\begin{proof} 

First, we prove this for weight $w=0$. We want to show that if $\beta$ represents a subalgebra, then $\beta M_{r^*c^*}(\mu)$ represents a subalgebra whenever
    $$\nu(\mu) \geq \max \left\{ \delta(\beta) - R/2 , \delta(\beta) - (R-\varepsilon(\beta)) \right\}.$$

\noindent The action $\beta \mapsto \beta M_{r^*c^*} (\mu)$ changes the polynomial expressions $f^m_{k,l}(\beta)$ as follows:

\begin{align*}
    f_{k,l}^m (\beta M_{r^*c^*}(\mu) ) = f_{k,l}^m(\beta)
    &\ + \mu \cdot \sum_{j\in [n]} \beta_{kr^*} \beta_{lj} l_{c^*j}(\beta^{-1}[m]) \\ 
    &\ + \mu \cdot \sum_{i\in [n]} \beta_{ki} \beta_{lr^*} l_{ic^*}(\beta^{-1}[m]) \\
    &\ + \mu \cdot \hspace{-0.35cm} \hspace{-0.7cm} \sum_{\substack{i,j \in [n]^2 \\ w_{\sigma(i)} + w_{\sigma(j)} = w_{\sigma(r^*)}}} \hspace{-0.8cm} \beta_{ki} \beta_{lj} \lambda_{ij}^{r^*} (\beta^{-1})_{c^* m} \  + \mathcal{O}(\mu^2).
\end{align*}

\noindent Consider the linear terms in $\mu$ one by one. In the first sum, every summand contains, for $j\in [n]$, the factors $\beta_{kr^*}$ and $l_{c^*j}(\beta^{-1}[m])$. By Lemma \ref{val_beta_added}, we know that
$$ \nu(\beta_{kr^*}) \geq \sum_{c^* \leq \iota < k} r_\iota - \delta(\beta) \geq \sum_{\iota^* \leq \iota < k } r_{\iota} - \delta(\beta)$$
Furthermore, by Lemma \ref{val_beta-1_influenced},

$$ \nu(l_{c^*j}(\beta^{-1}[m]) ) \geq  \sum_{m \leq \iota \leq \iota^*} r_{\iota} - \varepsilon(\beta) =  R+\sum_{m \leq \iota < \iota^*} r_{\iota} - \varepsilon(\beta).$$
Therefore, because $\nu(\mu) \geq \delta(\beta) - (R-\varepsilon(\beta)) $, we have

\begin{equation}\label{mu1}
    \pi^{\sum_{k \leq \iota} r_\iota + \sum_{\iota < m} r_{\iota}} \mu \beta_{kr^*} l_{c^*j} (\beta^{-1}[m]) \equiv 0  \bmod \P^{\sum_{\iota \in I} r_\iota} .
\end{equation}

\noindent Similarly, for the second sum in the linear part, we see that for every $i \in [n]$, in the summand corresponding to $i$, there are the factors $\beta_{lr^*}$ and $l_{ic^*}(\beta^{-1}[m])$. Given that by Lemma \ref{val_beta_added}
$$ \nu(\beta_{lr^*}) \geq \sum_{\iota^* \leq \iota < l} r_\iota - \delta(\beta) $$
and that by Lemma \ref{val_beta-1_influenced}
$$ \nu \left(l_{ic^*}(\beta^{-1}[m])\right) \geq \sum_{m \leq \iota \leq \iota^*} r_{\iota} - \varepsilon(\beta)= R + \sum_{m \leq \iota < \iota^*} r_{\iota} - \varepsilon(\beta)\, . $$
Therefore
\begin{equation}\label{mu2}
    \pi^{\sum_{l \geq \iota} r_\iota + \sum_{\iota < m} r_{\iota}} \mu \beta_{lr^*} l_{ic^*} (\beta^{-1}[m]) \equiv 0  \bmod \P^{\sum_{\iota \in I} r_\iota}.
\end{equation}

\noindent Lastly, for any $(i,j) \in [n]^2$ such that $w_{\sigma(i)} + w_{\sigma(j)} = w_{\sigma(r^*)}$ we know that by Lemma \ref{val_beta_influenced}
$$ \nu(\beta_{ki}) \geq R+ \sum_{\iota^* < \iota < k} r_{\iota} - \varepsilon(\beta), \qquad \nu(\beta_{lj}) \geq R+\sum_{\iota^* < \iota < l} r_{\iota} - \varepsilon(\beta)  $$
and by Lemma \ref{val_beta-1_added} that
$$ \nu((\beta^{-1})_{c^*m}) \geq \sum_{m \leq \iota < r^*} \! r_\iota - \delta(\beta). $$
Therefore
\begin{equation}\label{mu3}
     \pi^{ \sum_{ k\leq  \iota} r_\iota + \sum_{\iota < m} r_\iota } \mu \beta_{ki}\beta_{lj} (\beta^{-1})_{c^*m} \equiv 0 \bmod \P^{\sum_{\iota \in I} r_{\iota}}.
\end{equation}

\noindent Combining \ref{mu1}, \ref{mu2}, and \ref{mu3}, we may conclude that for all $(k,l,m) \in [n]^3$
$$ f^m_{k,l}(\beta M_{r^*c^*}(\mu)) \, \pi^{\sum_{m>\iota} r_\iota + \sum_{k\leq \iota} r_\iota + \sum_{l \leq \iota} r_\iota} \equiv f^m_{k,l}(\beta) \, \pi^{\sum_{m>\iota} r_\iota + \sum_{k\leq \iota} r_\iota + \sum_{l \leq \iota} r_\iota} \bmod \P^{\sum_{\iota \in I} r_{\iota}}. $$
Note that one can easily check that all $\mu^2$-terms vanish, using the assumption that $\nu(\mu) \geq \delta(\beta) - R/2$. Therefore, if $\beta$ represents a subalgebra, so does $\beta M_{r^*c^*}(\mu)$. 

For positive weight $w$, the proof is analogous. This time, we consider the congruences
    $$ \pi^w f^m_{k,l}(\beta M_{r^*c^*}(\mu)) \, \pi^{\sum_{m>\iota} r_\iota + \sum_{k\leq \iota} r_\iota + \sum_{l \leq \iota} r_\iota} \equiv 0 \bmod \P^{\sum_{\iota} r_\iota} $$
    where one easily checks that the $\mu^2$-terms vanish using $\nu(\mu) \geq  \delta(\beta) - \frac{R+w}{2}$. Using the same reasoning as for $w=0$, we can show that all $\mu$-linear terms have valuation at least 
    $$ w + \sum_{\iota \in I} r_\iota - (\delta(\beta) - (R - \varepsilon(\delta))), $$
    which means that they vanish for $ \nu(\mu) \geq \delta(\beta) - (R+w-\varepsilon(\delta)) $.
\end{proof}

\subsubsection{Proof of Theorem \ref{grading_residue_theorem} }

Now we combine the multiplicative and additive actions.

\begin{proof}[Proof of Theorem \ref{any_type_theorem}]

Let $\beta$ represent any lattice $\Lambda_\beta$ of type $(I,\overline{r})$, weight $w$, and associated with $\sigma \in \sym_{n}$. Then either $\varepsilon(\beta) > \frac{R+w}{2}$ or $\varepsilon(\beta) \leq \frac{R+w}{2}$. In the former case, since $\varepsilon(\beta) = (v_q(\#\text{orbit}(\beta))+1)(n-1)$, the orbit the orbit of $\Lambda_\beta$ under the multiplicative action by $\O^\times$ has a size of valuation $v_q(\#\text{orbit} (\beta)) > \frac{R+w}{2(n-1)} -1$. In the latter case, $\Lambda_\beta$ lies inside an additive orbit whose size has valuation at least $(R+w)/2$ by Theorem \ref{any_type_weight_additive_theorem}.
\end{proof}

\begin{corollary}\label{absolute_convergence_iw}
    Let $A$ be an $n$-dimensional graded $\O$-algebra. Then for any $(i,w) \in \N^2$,
    $$ v_q\left(\tilde{a}^w_{q^i}(A)\right) \geq \left\lfloor \frac{i}{n(n-1)^2} \right\rfloor. $$
\end{corollary}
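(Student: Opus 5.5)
The plan is to deduce the corollary directly from Theorem \ref{any_type_theorem}, by splitting $\tilde{a}^w_{q^i}(A)$ according to elementary divisor type and bounding $R$ from below in terms of $i$.

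First I decompose the count by type. Every primitive lattice of index $q^i$ has a unique elementary divisor type $(I,\overline{r})$, and there are only finitely many types of a given index. Hence
$$ \tilde{a}^w_{q^i}(A) = \sum_{(I,\overline{r})} \tilde{a}^w_{q^{I\cdot\overline{r}}}(A), $$
where the sum runs over all types whose index is $q^i$. By the ultrametric inequality,
$$ v_q\bigl(\tilde{a}^w_{q^i}(A)\bigr) \geq \min_{(I,\overline{r})} v_q\bigl(\tilde{a}^w_{q^{I\cdot\overline{r}}}(A)\bigr). $$
So it suffices to bound each summand from below by $\lfloor i/(n(n-1)^2)\rfloor$.

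Next I express $i$ in terms of the type. For a primitive lattice we have $\lambda_n=0$ and $\lambda_j=\sum_{\iota\geq j} r_\iota$, with $r_\iota=0$ for $\iota\notin I$. Therefore
$$ i=\sum_{j=1}^{n}\lambda_j=\sum_{\iota\in I}\iota\, r_\iota \leq R\sum_{\iota=1}^{n-1}\iota = R\,\frac{n(n-1)}{2}, $$
where $R=\max_{\iota\in I} r_\iota$ is as in Section \ref{any_type_subsection}. This gives $R\geq 2i/(n(n-1))$.

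Finally I apply Theorem \ref{any_type_theorem} to each type. Since $w\geq 0$,
$$ v_q\bigl(\tilde{a}^w_{q^{I\cdot\overline{r}}}(A)\bigr)\geq \left\lfloor\frac{R+w}{2(n-1)}\right\rfloor \geq \left\lfloor\frac{R}{2(n-1)}\right\rfloor \geq \left\lfloor \frac{i}{n(n-1)^2}\right\rfloor, $$
where the last step uses that the floor function is monotone. Taking the minimum over the finitely many types gives the claim.

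There is no real obstacle here; the only points to check carefully are the identity $i=\sum_\iota \iota\, r_\iota$, which relies on primitivity ($\lambda_n=0$), and the standing assumption $n\geq 2$, which makes the denominator nonzero. All the substantive work is already contained in Theorem \ref{any_type_theorem}.
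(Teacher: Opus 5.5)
Your proposal is correct and follows the paper's proof: use primitivity to get $i=\sum_{\iota\in I}\iota\, r_\iota\leq R\binom{n}{2}$, then apply Theorem~\ref{any_type_theorem}. You are slightly more explicit than the paper, which does not spell out the splitting of $\tilde{a}^w_{q^i}(A)$ over the finitely many types of index $q^i$ or the ultrametric step (the case $i=0$, where $I=\emptyset$ and $R$ is undefined, is trivial since the bound is $\lfloor 0\rfloor=0$).
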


\begin{proof}
    If a lattice has index $q^i$ and type $(I,\overline{r})$, then by definition $i = \sum_{\iota\in I} \iota \cdot r_\iota$. Clearly, this means that $i \leq \max_{\iota \in I} \{ r_\iota \} \cdot \binom{n}{2}$. Therefore, by Theorem \ref{any_type_theorem},
    $$ v_q\left(\tilde{a}^w_{q^i}(A)\right) > \frac{i}{2(n-1)\binom{n}{2}} -1 = \frac{i}{n(n-1)^2} -1. $$
\end{proof}

\begin{proof}[Proof of Theorem \ref{grading_residue_theorem}]
    Recall the definition $c_k(A) \coloneq \sum_{w=0}^{\lfloor k/({n+1}) \rfloor} \tilde{a}^w_{q^{k-n\cdot w}}(A)$.
    Corollary \ref{absolute_convergence_iw} implies that $ v_q(c_k(L)) \to \infty \ \text{ as } k \to \infty$. As mentioned in \ref{simple_pole_subsubsection}, this means that $\zeta_{A}(t)$'s pole at $t=1$ is simple. Moreover, by Fubini's Theorem \ref{fubini}, Equation \ref{i_vs_k} holds:
    $$ \sum_{k \in \N} c_k(A) = \sum_{i=0}^{\infty} \sum_{w=0}^i \tilde{a}^w_{q^i}(A). $$
    By Equation \ref{sumc=nres}, we know that the left hand side is $n \cdot \text{res}_{t=1} \zeta_{A}(t) = [(1-t^n)\zeta_{A}(t)]_{t=1}$. The right hand side is
    $$ \sum_{i=0}^{\infty} \tilde{a}_{q^i}(A) = \sum_{i=0}^{\infty} \tilde{a}_{q^i}(\O^n) = n\binom{\infty}{n-1}_q. $$
    Therefore
    $$ \text{res}_{t=1} \zeta_{A}(t) = \binom{\infty}{n-1}_q $$
    as needed to be proven.
\end{proof}

\noindent We repeat the significance of this result in terms of the $p$-asymptotics of $A$.

\begin{corollary}
    Let $A$ be an $n$-dimensional graded $\O$-algebra. Then 
    $$ a_{q^i}(A) \underset{p}{\longrightarrow} \frac{1}{(1-q)(1-q^2) \dots (1-q^{n-1})} \ \text{ as } \ i \to \infty . $$
\end{corollary}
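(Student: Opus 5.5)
This corollary restates the theorem just proved (Theorem \ref{grading_residue_theorem}) in terms of $p$-asymptotics, so the proof only has to convert ``simple pole at $t=1$ with residue $\binom{\infty}{n-1}_q$'' into $p$-adic convergence of the coefficients. The tool for this is the exponential-sum expansion \eqref{a=sum_exp} together with its $p$-adic consequence \eqref{a_to_smallest_pole_eq}.

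First, write $\zeta_A(t)=P(t)/Q(t)$ in reduced form. By \eqref{Q(T)=prod}, every pole has the form $\mu q^{-a_j/b_j}$ with $a_j\ge 0$. Take the partial fraction decomposition. Each term $(1-\mu q^{a}t)^{-k}$ with $a>0$ contributes to the coefficient of $t^i$ a quantity of the form (quasi-)polynomial in $i$ times $q^{ai}$. Since the quasi-polynomial coefficients lie in a finite extension and have bounded denominators, the $q$-adic valuation of this contribution is at least $ai-C$, so it tends to $0$ $p$-adically. A polynomial term of $P/Q$ only affects finitely many coefficients. It remains to handle the poles with $a=0$, that is, at roots of unity $\mu$ with $\mu^{b}=1$.

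Second, treat the poles at roots of unity. The paper's discussion indicates that the only pole of this kind is at $t=1$, as in the worked examples. To justify this in the graded case I would use the simplicity argument from the proof of Theorem \ref{grading_residue_theorem}. There, $(1-t^n)\zeta_A(t)=\sum_k c_k(A)t^k$ with $c_k(A)\to 0$ $p$-adically. Hence $(1-t^n)\zeta_A(t)$ is a rational function whose coefficients tend to $0$ $p$-adically, which by the argument above means it has no poles on the unit circle of the form $\mu q^{0}$. Consequently every root-of-unity pole of $\zeta_A$ is simple and occurs at an $n$-th root of unity $\mu$. To show that only $\mu=1$ actually occurs, I would compare with $(1-t)\zeta_A(t)$. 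Its coefficients $a_{q^i}(A)-a_{q^{i-1}}(A)$ can be written as $\sum$ over maximal subalgebras, and showing that these tend to $0$ is exactly the statement to be proved. So I would instead argue directly: the residue of $\zeta_A$ at $\mu$ equals $\frac{1}{n}\sum_k c_k(A)\mu^{-k}$, up to the normalising factor. This value is determined by the $p$-adic limits along residue classes of $k$ modulo $n$, and these limits should be controlled by the absolute convergence established in Corollary \ref{absolute_convergence_iw}.

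Third, conclude. Once only the simple pole at $t=1$ remains among the $a=0$ poles, the coefficient of $t^i$ equals the residue $\binom{\infty}{n-1}_q$ plus terms tending to $0$ $p$-adically. This gives $a_{q^i}(A)\underset{p}{\longrightarrow}\frac{1}{(1-q)\cdots(1-q^{n-1})}$.

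The main obstacle is ruling out simple poles at nontrivial $n$-th roots of unity $\mu$. The step I would try first is Fubini's Theorem \ref{fubini}: by absolute convergence, rearrange $\sum_k c_k(A)\mu^{k}$ as $\sum_i\sum_w \tilde a^w_{q^i}(A)\mu^{i+nw}=\sum_i \mu^i\tilde a_{q^i}(A)$. The right-hand side agrees with the abelian case $\O^n$, because $\mu^{nw}=1$. For the abelian case, Example \ref{abelian_example} shows that no such pole occurs, so the residue of $\zeta_A$ at $\mu\neq 1$ vanishes.
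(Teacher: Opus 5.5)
Your argument is correct. Its skeleton matches the paper, which gives no separate proof and presents the corollary as a rephrasing of Theorem \ref{grading_residue_theorem} through Equation \ref{a_to_smallest_pole_eq}. What you add is a step that this rephrasing silently needs.

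A simple pole at $t=1$ with residue $\binom{\infty}{n-1}_q$ does not by itself give convergence. Even knowing that $(1-t^n)\zeta_A(t)$ has no poles at roots of unity, $\zeta_A$ could still have simple poles at nontrivial $n$-th roots of unity $\mu$. In that case $a_{q^i}(A)$ would oscillate with $i \bmod n$. Your twisted rearrangement $\sum_k c_k(A)\mu^k=\sum_i \mu^i\tilde a_{q^i}(A)=\sum_i\mu^i\tilde a_{q^i}(\O^n)=0$ rules this out. It uses only Corollary \ref{absolute_convergence_iw} and Theorem \ref{fubini}, since multiplying the summands by roots of unity leaves their $p$-adic absolute values unchanged.

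In a write-up, make three points explicit.
\begin{itemize}
\item The converse used in your second step: a nonzero quasi-polynomial cannot tend to $0$ $p$-adically. On each residue class it is a polynomial $g$, and $g(m)=\lim_{N\to\infty}g(m+p^N)$ by $p$-adic continuity, while the right-hand side tends to $0$.
\item The identification of the value of $F(t)=(1-t^n)\zeta_A(t)$ at $\mu$ with the $p$-adic sum $\sum_k c_k(A)\mu^k$. This is valid because every pole of $F$ lies outside the closed $p$-adic unit disc, the same fact behind Equation \ref{sumc=nres}.
\item The abelian evaluation. It rests on $\tilde a_{q^i}(A)$ counting all primitive lattices irrespective of the multiplication, and on $\sum_i\mu^i\tilde a_{q^i}(\O^n)=(1+\mu+\dots+\mu^{n-1})/\prod_{j=1}^{n-1}(1-q^j\mu)=0$.
\end{itemize}
You should also drop the abandoned detour through $(1-t)\zeta_A(t)$.

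There is a shorter route that avoids partial fractions altogether:
\begin{itemize}
\item Since $a_{q^i}(A)=\sum_{j\ge 0}c_{i-nj}(A)$, along each class $i\equiv r \pmod n$ the $p$-adic limit exists and equals $\sum_{k\equiv r}c_k(A)$.
\item As $k=i+nw\equiv i \pmod n$, Theorem \ref{fubini} turns this into $\sum_{i\equiv r}\tilde a_{q^i}(\O^n)$.
\item That sum is the limit of $a_{q^i}(\O^n)$ along the same class, which is $\binom{\infty}{n-1}_q$ for every $r$ by Example \ref{abelian_example}.
\end{itemize}
This is exactly the inverse discrete Fourier transform of your computation at the $n$-th roots of unity.
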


\section{Smallest real pole of Igusa zeta functions}
\addtocontents{toc}{\protect\setcounter{tocdepth}{1}}

\noindent The methods developed in this paper fit within the broader framework of Igusa zeta functions. Given a polynomial map $f: \O^n\to \O^m: \overline{x} \mapsto (f_1(\overline{x}),\dots,f_m(\overline{x}))$, its Igusa zeta function is a complex function defined by a $p$-adic integral as follows \cite{igusa_intro}: 
$$ I_f(s) \coloneq \int_{\O^n} \| f(\overline{x}) \|^s d\mu  $$
where $\|y\| \coloneq  \max_{j=1}^m \{ q^{-\nu(y_j)} \}$ for $y\in \O^m$, and $\mu$ denotes the Haar measure, conventionally normalized to set $\mu(\O^n) = 1$. Igusa zeta functions can be seen as power series in $t\coloneq q^{-s}$ which count solutions to polynomial congruences. Indeed, let $M_{i}(f) \coloneq \#\{ \overline{a} \in \O^n \ | \ f(\overline{a}) \equiv 0 \bmod \P^i \}$. One easily checks that
\begin{equation}\label{Igusa_Poincare}
    \frac{1-q^{-s}I_f(s)}{1-q^{-s}} = \sum_{i = 0}^{\infty} M_i(f) \, q^{-(s+n)i} . 
\end{equation}
Igusa \cite{igusa_I} proved that $I_f(s)$ is rational in $t$ for $m=1$, using Hironaka's theorem on resolution of singularities \cite{hironaka_resolution}. 
Denef \cite{denef_84} gave a different proof, which relied on a model-theoretic result by Macintyre \cite{macintyre_qe} on quantifier elimination in $\Q_p$. These results in turn allowed Grunewald, Segal, and Smith to prove the rationality of local subalgebra and ideal zeta functions \cite{gss}. They did this by showing that these zeta functions are closely related to certain Igusa zeta functions for a well-chosen set of polynomials.

Further results by Denef and van den Dries \cite{denef_vdd} gave du Sautoy \cite{duSau_uniformity} the tools to prove the denominator form \ref{Q(T)=prod}, mentioned in the introduction. They establish the following. Let $I_f(s) = \frac{P(t)}{Q(t)}$ be reduced for two integer polynomials $P(T), Q(T) \in \Z[T]$. Then there exist distinct pairs of integers $(a_1, b_1), \dots, (a_e, b_e)$, and nonnegative integers $d_1, \dots, d_e$, such that $Q(T)$ divides $\prod_{j=1}^e (q^{a_j} - T^{b_j})^{d_j+1}$. Note the similarity with expression \ref{Q(T)=prod}. The pairs $(a_j,b_j)$ are the numerical data inherited from the resolution of singularities. Thus, the real pole spectrum of $I_f(s)$ is contained in $\{-\frac{a_1}{b_1}, \dots , -\frac{a_e}{b_e}\}$. By Equation \ref{Igusa_Poincare}, this means that there exist quasi-polynomials $R_1(x),\dots, R_e(x)$ over $\Q$, with $R_j$ of degree $d_j$ and period $b_j$, such that,
$$ M_i(f) = \sum_{j=1}^e R_j(i) \left(q^{n-\frac{a_j}{b_j}}\right)^i. $$
This is analogous to Equation \ref{a=sum_exp}. As we remarked with Equation \ref{a_to_smallest_pole_eq}, the exponential term for which $n - \frac{a_j}{b_j}$ is minimal dominates the $p$-adic asymptotics. In particular,

\begin{equation}\label{poleinf=liminf}
    n-\max_{j}\left(\frac{a_j}{b_j} \right) = \liminf_{i\to \infty} \left( \frac{v_q(M_i(f))}{i}\right).
\end{equation}

\noindent Using homothety classes and weights, Voll \cite{voll_functional} established another connection between subalgebra and Igusa zeta functions, different from the one by Grunewald, Segal, and Smith \cite{gss}. In the present paper, we favoured Voll's approach, which we outlined in Subsection \ref{poly_cong_subsection}. By showing that counting subalgebras is equivalent to counting primitive sublattices by weight, which in turn amounts to enumerating solutions to the polynomial congruences \ref{klm_poly=0}, Voll proved that, by Equation \ref{Igusa_Poincare}, subalgebra zeta functions can, with some modifications, be identified with the Igusa zeta functions corresponding to those polynomial expressions.

Much research has been devoted to the pole spectrum of Igusa zeta functions \cite{denef_vdd, meuser_poles_curves, veys_poles_monodromy_93, zunifa_poles_alg_sets, veys_zuniga_analytic}. In practice, however, computing Igusa zeta functions quickly becomes impracticable, especially if $f$ is a highly singular system of polynomials. This is why structural properties of the relevant polynomials, such as those which allowed us to prove Theorem \ref{any_type_theorem} (e.g. weighted homogeneity), are key to making general statements about the poles of the associated zeta functions. With the following result, we attempt to generalize the ideas of this paper to the context of Igusa zeta functions. Let $J_f(a)$ denote $f$'s Jacobian at a point $a \in \O^n$ and consider its elementary divisors $\delta_j(J_f(a))$ for $j\in [n]$.

\begin{theorem}[Reverse Hensel]\label{reverse_hensel}
    Given a system of $m$ polynomials $f_1, ..., f_m \in \O[X_1, ..., X_n]$. Let $\lambda_1,\dots, \lambda_n \in [0,1]$ be such that for all $a \in \O^n$ we have
    $$ 2\cdot \nu(\delta_j(J_f(a))) \geq \lambda_j \cdot \nu(f(a)). $$
    Then for any pole $s_0$ of $I_f(s)$,
    $$ \textup{Re}(s_0) \geq -n  + \sum_{j=1}^n \frac{\lambda_j}{2} \, . $$
\end{theorem}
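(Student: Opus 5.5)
Via Equation \ref{poleinf=liminf}, the statement reduces to a lower bound on the $p$-adic valuation of the counts $M_i(f)$. The real poles of $I_f(s)$ lie among $-a_j/b_j$, and every pole $s_0$ has $\textup{Re}(s_0) = -a_j/b_j$ for some $j$ (cf. Equation \ref{local_poles_equation}). Hence it suffices to show
$$ \liminf_{i\to\infty} \frac{v_q(M_i(f))}{i} \;\geq\; \sum_{j=1}^n \frac{\lambda_j}{2}, $$
since then $n - \max_j (a_j/b_j) \geq \sum_j \lambda_j/2$, i.e.\ $-a_j/b_j \geq -n + \sum_j \lambda_j/2$ for all $j$.

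To prove this bound I will copy the orbit-counting strategy of Theorem \ref{any_type_theorem}. Here the group is additive translations, playing the role of the transvections of Subsection \ref{add_subsection}. Work at level $N\geq i$, counting residue classes $\bar a \in (\O/\P^N)^n$ with $f(a)\equiv 0 \bmod \P^i$; then $M_i(f)$ is this number times a fixed power of $q$ that can be absorbed. Fix a solution $a$ with $\nu(f(a)) = e \geq i$. By the Smith normal form, choose $U,V \in \GL_n(\O)$ and $\GL_m(\O)$ with $V J_f(a) U = \operatorname{diag}(\pi^{d_1},\dots,\pi^{d_n})$, where $d_j = \nu(\delta_j(J_f(a)))$. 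Then $d_j \geq \lambda_j e/2 \geq \lambda_j i/2$ by hypothesis.

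The key step is a reverse Hensel statement. Consider perturbations $a + U h$ with $h_j \in \P^{\lceil i/2\rceil}$ in each coordinate and additionally $\nu(h_j) \geq i - d_j$. Taylor expansion gives
$$ f(a+Uh) = f(a) + J_f(a)Uh + O(h^2). $$
The quadratic term lies in $\P^{i}$ because $\nu(h)\geq i/2$. The linear term is $V^{-1}\operatorname{diag}(\pi^{d_j})h$, which lies in $\P^{i}$ since $d_j + \nu(h_j) \geq i$. So the whole box $a + U\bigl(\prod_j \P^{\max(\lceil i/2\rceil,\, i-d_j)}\bigr)$ consists of solutions mod $\P^i$. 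Two boxes attached to different solutions either coincide or are disjoint, because the box depends only on $d_j$ and $U$, which must be stable on the box. Making that rigorous is the main obstacle. The plan is to replace the $d_j$ by the uniform lower bounds $\lceil \lambda_j i/2\rceil$ and the variable frame $U$ by a partition into finitely many Jacobian-type strata, so that the boxes become cosets of a fixed subgroup on each stratum.

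Counting then goes as follows. Each box, with $d_j$ replaced by $\lceil\lambda_j i/2\rceil$, has index in $(\O/\P^N)^n$ equal to $q^{\sum_j \max(\lceil i/2\rceil, i-\lceil\lambda_j i/2\rceil)}$. Relative to the trivial box $\P^{i}$ in each coordinate, it contains at least $q^{\sum_j \min(\lfloor i/2\rfloor,\lceil \lambda_j i/2\rceil)} \geq q^{\sum_j \lambda_j i/2 - O(1)}$ residue classes mod $\P^i$; here I use $\lambda_j\leq 1$, which is where that hypothesis enters. Since the solution set is a disjoint union of such boxes, $v_q(M_i(f)) \geq \sum_j \lambda_j i/2 - O(1)$, and taking $\liminf$ finishes the proof. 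If the stability of $U$ across a box proves delicate, my fallback is the paper's orbit-stabilizer device: let $\P^{\lceil i/2\rceil}$-translations act on the solution set and bound the stabilizer directly via the Jacobian's elementary divisors, as in Lemma \ref{val_beta_added}.
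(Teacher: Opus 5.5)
Your reduction via Equation~\ref{poleinf=liminf} and your local computation are sound, and they are essentially the paper's. On a ball of radius half the level, $f$ is affine modulo the level, so the solutions there are cut out by a linear congruence in $J_f(a)$ and counted by its elementary divisors. The hypothesis $\lambda_j\le 1$ enters exactly where you use it. (The paper works at level $2i$ with balls $a+\pi^i\O^n$, so that $\nu(f(a))\ge 2i$ gives $\nu(\delta_j(J_f(a)))\ge\lambda_j i$, and it handles odd levels the same way.)

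The genuine gap is the step you flag yourself. To turn local box sizes into a bound on $v_q(M_i(f))$, you need the solution set to be a \emph{disjoint} union of pieces, each of cardinality divisible by $q^{\sum_j\lambda_j i/2-O(1)}$. You do not establish this, and the repair you sketch goes the wrong way:
\begin{itemize}
\item Replacing $d_j$ by the uniform bounds $\lceil\lambda_j i/2\rceil$ produces smaller, non-canonical boxes. These are cosets of $U_a\prod_j\P^{e'_j}$ for a Smith frame $U_a$ that is neither unique nor constant in $a$. Cosets of different subgroups can overlap without coinciding, so you lose disjointness rather than gain it, and no concrete strata are given that would fix the subgroup.
\item The fallback fails too. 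Translations by $\P^{\lceil i/2\rceil}\O^n$ do not preserve the set of solutions modulo $\P^i$ (e.g.\ for $f=X_1$), so there is no action on that set to which orbit--stabilizer could be applied. The paper's orbit arguments for graded algebras work only because those actions, or suitable subgroups of them, preserve the property being counted.
\end{itemize}

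The missing observation is that the partition should be fixed \emph{before} any solution is chosen. Because the Taylor coefficients of $f$ at $a$ lie in $\O$, for all $h\in\P^{\lceil i/2\rceil}\O^n$ one has $f(a+h)\equiv f(a)+J_f(a)h \bmod \P^i$. Hence, for a solution $a$ modulo $\P^i$, your box $a+U\prod_j\P^{\max(\lceil i/2\rceil,\,i-d_j)}$, taken with the true $d_j$, is \emph{exactly} the set of solutions modulo $\P^i$ in the ball $a+\P^{\lceil i/2\rceil}\O^n$. These balls are the cosets of one fixed subgroup, so they partition $(\O/\P^i)^n$. Two of your boxes therefore coincide or are disjoint for a trivial reason, and no stability of $U$ or of the $d_j$ across a box is needed.

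Each ball then contributes either no solutions or exactly $q^{\sum_j\min(d_j,\lfloor i/2\rfloor)}$ of them. Applying the hypothesis at any one solution in the ball gives $d_j\ge\lceil\lambda_j i/2\rceil$, so the exponent is at least $\sum_j\min(\lceil\lambda_j i/2\rceil,\lfloor i/2\rfloor)\ge\frac{i}{2}\sum_j\lambda_j-\frac{n}{2}$. Summing over balls yields $v_q(M_i(f))\ge\frac{i}{2}\sum_j\lambda_j-\frac{n}{2}$, and hence the $\liminf$ bound. With this observation inserted, your argument is complete and coincides with the paper's proof, run at level $i$ instead of $2i$.
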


\begin{proof}
    If $m <n$, set $f_{m+1},\dots,f_n \coloneq  0$. Thus, without loss of generality, we may assume $m \geq n$. Let $a \in \O^n$ be a solution of $f(x)$ modulo $\pi^{2i}$ for some $i \in \N$: $ f(a) \equiv 0 \bmod \P^{2i} $. Then for any $b \in \O^n$, we have by Taylor expansion
    $$ f(a + \pi^i b)  = f(a) + \pi^iJ_f(a)b + \mathcal{O}(\pi^{2i}) \equiv f(a) + \pi^iJ_f(a)b \bmod \P^{2i}.$$
    Thus, in order for $b$ to be a solution to $f(a + \pi^i b)  \equiv 0 \bmod \P^{2i}$, it must solve the system of linear congruences given by
    $$ J_f(a) b \equiv 0 \bmod \P^i. $$
    It is a well-known fact about such systems that the number of solutions is determined by the elementary divisors of $J_f(a)$. More precisely, if $\nu_1,\dots,\nu_n$ are the valuations of the elementary divisors, then the number of solutions is $q^{\sum_j\min\{\nu_j,i\}}$. By assumption $2\nu_j \geq \lambda_j \cdot 2i$, and thus $\min\{\nu_j, i\} \geq \min\{\lambda_j \cdot i, i\} = \lambda_j \cdot i $. Hence, for every solution $a$ modulo $\pi^{2i}$, we have proven that the ball $\left((a+\pi^i\O) / (\pi^{2i} \O)\right)^n$ contains a number of solutions with $q$-adic valuation at least $i\cdot\sum_{j=1}^n \lambda_j$.

    Thus, $v_q(M_{2i}(f)) \geq i\cdot\sum_{j=1}^n \lambda_j$ for all $i\in \N$, and similarly $v_q(M_{2i+1}(f)) \geq i\cdot\sum_{j=1}^n \lambda_j$. By Formula \ref{poleinf=liminf}, this proves the theorem.
\end{proof}

\begin{remark}
    A similar theorem was proven by Segers \cite{segers_lower_bound} for a single polynomial in $n\geq2$ variables. He states that all poles have real part at least $-\frac{n}{2}$, which is optimal as there exist polynomials $f$ for which this is a pole of $I_f(s)$. One easily checks that for $m=1$ and $n\geq 2$, Theorem \ref{reverse_hensel} implies that all poles of the associated Igusa zeta function have real part at least $-\frac{n+1}{2}$. Thus, for the single-polynomial case, Segers's theorem provides a better lower bound. Strengthening Theorem \ref{reverse_hensel} to imply Segers's theorem is a possible avenue for future research.
\end{remark}

\noindent Theorem \ref{reverse_hensel} provides us with sufficient structural conditions to show non-trivial lower bounds for an Igusa functions smallest real pole. Note that the theorem requires that the polynomial system $f$ be `highly singular', in the sense that the valuation of the Jacobian's elementary divisors of $f$ at a point $a$ should be bounded \textit{from below} with respect to $f(a)$. This stands in stark contrast with Hensel's Lemma, which assumes essentially the opposite and provides uniqueness to lifts of $p$-adically approximate solutions.

\begin{theorem}[Hensel's Lemma]
    Let $f\in \O[x_1, \dots,x_n]$ and $\overline{a} \in \O^n$ satisfy
    $$ 2\cdot \nu(\det J_f(\overline{a})) < \nu(f(\overline{a})). $$
    Then there exists a unique $\overline{b} \in \O^n$, such that $f(\overline{b}) = 0$ and $\nu(\overline{a} - \overline{b}) > \nu(\det J
    _f(\overline{a}))$.
\end{theorem}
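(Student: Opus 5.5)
The plan is the classical Newton iteration, run multivariately. Since the statement is a single polynomial $f$ in $n$ variables, I read $J_f(\overline{a})$ as the gradient (the square case $f=(f_1,\dots,f_n)$ being identical). I set $e \coloneq \nu(J_f(\overline{a}))$, the minimal valuation of a partial derivative (in the square case, $\nu(\det J_f(\overline{a}))$). The hypothesis then reads $\nu(f(\overline{a})) > 2e$.

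I will build a sequence $\overline{a}_0 = \overline{a}, \overline{a}_1, \dots$ with
\[
\nu(f(\overline{a}_{k+1})) \geq 2\nu(f(\overline{a}_k)) - 2e ,
\]
so that $\nu(f(\overline{a}_k)) - 2e$ at least doubles at each step. The step is $\overline{a}_{k+1} = \overline{a}_k + h_k$, where $h_k$ is a multiple of the coordinate vector of a partial derivative of minimal valuation; in the square case it is $-J_f(\overline{a}_k)^{-1}f(\overline{a}_k)$, written through the adjugate. Its size is $\nu(h_k) = \nu(f(\overline{a}_k)) - e > e$.

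The key steps, in order, are as follows.

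\textbf{Taylor expansion.} Expanding at $\overline{a}_k$ gives $f(\overline{a}_k+h) = f(\overline{a}_k) + J_f(\overline{a}_k)h + \mathcal{O}(\nu(h)\cdot 2)$, with integral coefficients since $f$ has coefficients in $\O$. The linear term cancels $f(\overline{a}_k)$, and the remainder has valuation at least $2\nu(h_k) = 2\nu(f(\overline{a}_k))-2e$.

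\textbf{Invariance of $e$.} I check that $e$ does not change. Each partial derivative at $\overline{a}_{k+1}$ differs from its value at $\overline{a}_k$ by a term of valuation at least $\nu(h_k) > e$. So the minimal valuation stays exactly $e$, by the ultrametric inequality; for the determinant, expand the adjugate similarly.

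\textbf{Existence.} From the doubling, $\nu(h_k)\to\infty$, so $(\overline{a}_k)$ is Cauchy. By completeness of $\O$ (it is compact), it converges to some $\overline{b}$, and continuity gives $f(\overline{b})=0$. Every $h_k$ has valuation strictly greater than $\nu(h_0) = \nu(f(\overline{a}))-e > e$, so $\nu(\overline{a}-\overline{b}) > e$.

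\textbf{Uniqueness.} This is the main obstacle. Take two roots $\overline{b}, \overline{b}'$ in the ball, set $d = \overline{b}'-\overline{b}$, and expand $0 = f(\overline{b}') - f(\overline{b}) = J_f(\overline{b})d + \mathcal{O}(2\nu(d))$. In the square case, multiplying by the adjugate gives $\det J\cdot d = \mathcal{O}(2\nu(d))$. Hence $e + \nu(d) \geq 2\nu(d)$, i.e. $\nu(d) \leq e$, contradicting $\nu(d) > e$ unless $d = 0$.

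For a single polynomial with $n\geq 2$, literal uniqueness fails, since the zero set is a hypersurface. There I would prove the statement in the form the author surely intends: uniqueness of the root $\overline{b}$ obtained along the chosen Newton line $\overline{a}+\O\cdot\nabla$. This reduces to the one-variable argument above applied to $g(t)=f(\overline{a}+t\,u)$, where $u$ is the chosen direction. Alternatively I would interpret $f$ as a square system, where the determinant argument goes through verbatim.
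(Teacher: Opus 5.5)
The paper gives no proof of this statement. It quotes the classical Hensel's Lemma only to contrast it with Theorem~\ref{reverse_hensel}, so there is no argument to compare yours against. The only reading under which $\det J_f(\overline a)$ and the uniqueness claim make sense is a square system $(f_1,\dots,f_n)$. For that reading your Newton iteration is the standard proof, and it is correct.

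There are two things to tighten. First, with $h_k = -\det J_f(\overline a_k)^{-1}\,\mathrm{adj}(J_f(\overline a_k))\, f(\overline a_k)$ you only get $\nu(h_k)\geq \nu(f(\overline a_k))-e$, not the equality you state. Every later step uses only this inequality: the doubling estimate, the bound $\nu(h_k)>e$, and $\nu(\overline a-\overline b)\geq \nu(f(\overline a))-e>e$. So nothing breaks. Second, in the uniqueness step you silently use $\nu(\det J_f(\overline b))=e$. State it explicitly; it is your invariance computation applied to $h=\overline b-\overline a$, which has $\nu(h)>e$.

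Your aside on a single polynomial with $n\geq 2$ changes the statement rather than proving it. As written it also uses the wrong line. Along $\overline a+\O\cdot\nabla f(\overline a)$ one gets $g'(0)=\sum_i(\partial_i f(\overline a))^2$, which can vanish. For example, take $f=x_1+\sqrt{-1}\,x_2-5$ over $\Z_5$ and $\overline a=0$: then $g$ is the nonzero constant $-5$ and has no root at all. The one-variable reduction works along $\overline a+\O\,u_j$, where $u_j$ is the $j$-th standard basis vector with $\nu(\partial_j f(\overline a))=e$. That is in fact what your existence step does.
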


\noindent In the Henselian framework, one requires $f$ to be sufficiently non-singular at a given point $\overline{a} \in \O^n$ to study $I_f(s)$. When this condition is not fulfilled, two common approaches are either to apply Hironaka's desingularization and study the Igusa zeta function of the resulting system instead \cite{igusa_intro, veys_poles_curves_90, meuser_poles_curves}, or to impose certain non-degeneracy conditions \cite{zuniga_non_deg_homog, zuniga_semiquasihomog, veys_zuniga_analytic, TR14, rossmann_framework}. However, for large and strongly singular systems, the former tends to be prohibitively difficult and the latter not applicable. That is why we took an approach in the style of Theorem \ref{reverse_hensel} in this paper.

Before explaining how the polynomials \ref{klm_poly} studied in Section \ref{graded_algebras_section} fit within the scope of Theorem \ref{reverse_hensel}, we consider another class of examples for which the theorem applies. The following is a consequence of Theorem \ref{reverse_hensel}.

\begin{example}\label{homog_deg2_corr}
    Let $f_1,\dots, f_m \in \O[x_1, \dots, x_n]$ be a system of homogeneous polynomials of degree at least $2$. Then all poles of $I_f(s)$ have real part at least $-n + \frac{n}{n+1}$.
\end{example}

\begin{proof}
    We use the same notation as in the proof of Theorem \ref{reverse_hensel}. Let $a \in \O^n$ be such that $f(a) \equiv 0\bmod \P^{2i}$. By homogeneity, $f(\lambda \cdot a) \equiv 0 \bmod \P^{2i}$ for any $\lambda \in \O^\times$. Observe that the valuation of the size of the orbit $\O^{\times} \cdot a$ is $2i-1-\nu(a)$. If $\nu(a) < \frac{2i}{n+1}$, this means that $v_q\left( \# (\O^{\times} \cdot a) \right) \geq 2i-\frac{2i}{n+1} = \frac{n}{n+1}2i$, as needed to be proven.

    On the other hand, if $\nu(a) \geq \frac{2i}{n+1}$, note that every entry of $J_f(a)$ has valuation at least $\nu(a)$. This is a consequence of the fact that every monomial in the polynomial system $f$ has degree at least $2$. Hence, every elementary divisor of $J_f(a)$ has valuation at least $\nu(a)$. Therefore, by the same reasoning as in the proof of Theorem \ref{reverse_hensel}, the number of $b \in \O^n$ such that $f(a + \pi^i b) \equiv 0 \bmod \P^{2i}$ has valuation at least $n\cdot \nu(a) \geq \frac{n}{n+1}2i $.
\end{proof}

\noindent The lower bound given in this example can undoubtedly be improved, but we limit ourselves to the present approach, because the proof is succinct and mirrors the structure of the proof of Theorem \ref{any_type_theorem} in Subsection \ref{any_type_subsection}. The proof of Example \ref{homog_deg2_corr} distinguishes two cases: $\nu(a)$ small, and $\nu(a)$ large. 

In the first case, we use the homogeneity of the system $f$ to apply a multiplicative action by $\O^\times$. This reflects the approach taken in \ref{mult_subsection}, where we used weighted homogeneity of the polynomial expressions \ref{klm_poly} to the same effect. Note that the valuation of the size of the orbit $\O^\times \cdot a$ is larger, the smaller $\nu(a)$. This observation corresponds to Lemmas \ref{val_beta_influenced} and \ref{val_beta-1_influenced}, where we proved that if certain entries of $\beta$ and $\beta^{-1}$ have low valuation, then the valuation of the orbit size of $\Lambda_\beta$ by the multiplicative action \ref{mult_action}, must be large.

In the other case, we apply an additive action by $\O^n$ given by $a \mapsto a + \pi^ib$. Here we use the assumption that the degree of every monomial in $f$ is at least $2$, whereby any linear term in $b$ has a coefficient with valuation at least $\nu(a)$. This corresponds to the action described in \ref{add_subsection}. Because we only assumed that the polynomial expressions \ref{klm_poly} are \textit{weighted} homogeneous and, in addition, did not place any restrictions on the weights or degrees, we had to be more precise in defining the additive action \ref{add_action}, than we were in the proof of Example \ref{homog_deg2_corr}. In particular, we had to narrow it to an action by $\O$ instead of $\O^n$. The idea, however, remains the same: the higher the valuations of certain entries of $\beta$ and $\beta^{-1}$, the more freely the additive action may change $\Lambda_\beta$, without changing its weight, as made explicit by Lemma \ref{any_type_weight_additive_theorem}, resulting in a higher valuation of the additive orbit size.

Both for Example \ref{homog_deg2_corr} as for Theorem \ref{any_type_theorem}, the proof relies on the interplay between the multiplicative and additive actions. In particular, if the valuation of the orbit size of the former is small, then that of the latter must be large. Therefore, either valuation must always be large. In general, the idea is that if any $p$-adically approximate solution $a$ to $f$ can be acted upon such that the first-order perturbation to $f(a)$ is sufficiently $p$-adically insignificant (read $J_f(a)b \equiv 0 \bmod p^i$), then a non-trivial lower bound to the associated Igusa zeta function $I_f(s)$ can be found.

\subsection*{Acknowledgements}

I thank my PhD-supervisor Christopher Voll, for pointing me to the questions answered and left unanswered in this paper, and for proof-reading several iterations. I am also grateful to Tobias Rossmann for discussing his many conjectures and ideas with me. Likewise, I thank Moritz Petschick and José Pedro Quintanilha for some interesting conversations and insights.

\printbibliography

\end{document}